\newtheorem{theorem}{Theorem}[section]
\newtheorem{proposition}[theorem]{Proposition}
\newtheorem{lemma}[theorem]{Lemma}
\newtheorem{definition}[theorem]{Definition}
\newtheorem{corollary}[theorem]{Corollary}
\newcommand\ep{{\varepsilon}}
\newcommand\zu{[0,1]}
\newcommand\mk{\medskip}
\newcommand\sk{\smallskip}
\newcommand{\R}{\mathbb{R}}
\newcommand{\N}{\mathbb{N}}
\newcommand\supp{\mathrm{supp}}
\newcommand\wI{{I}^{(n)}}
\newcommand\support{\mbox{Supp}}
\newcommand\Pzeroeta{\mathcal{P}_\eta}
\newcommand{\IA}{I^{\mathcal{A}}}
\newcommand{\IAc}{I^{\mathcal{A}^c}} 
\newcommand\ds{\displaystyle}
\newcommand\dimi{\underline \dim}
\newcommand\mup{{\mu_p}}
\newcommand\G{G_{1,2}}
\newcommand\Tau{T}
\newcommand\Taue{T^\eta}
\newcommand\Tauet{\widetilde T^\eta}
\newcommand\coefg{\delta_{1,2}}
\newcommand{\locloc}{{}}
\newcommand{\wep}{\widetilde \ep}
\newcommand\weta{{\widetilde \eta}}
\newcommand\dimu{\underline{\dim}}
\newcommand\dimo{\overline{\dim}}
\newcommand\ml{multifractal }
\newcommand\nue{\nu_{\eta}}
\newcommand\muu{{\mu_{p_1}}}
\newcommand\mud{{\mu_{p_2}}}
\newcommand\jA{j^{\mathcal{A}}}
\newcommand\jAc{j^{\mathcal{A}^c}}
\newcommand\AAA{\mathcal{A}}
\newcommand\BBB{\mathcal{B}}
\newcommand\AAAc{{\mathcal{A}^c}}
\newcommand\BBBc{{\mathcal{B}^c}}
\begin{document}
\title[On the multivariate multifractal formalism]{On the multivariate multifractal formalism: examples and counter-examples}

\author{St\'ephane Seuret}

\address{St\'ephane Seuret,   Univ Paris Est Creteil, Univ Gustave Eiffel, CNRS, LAMA UMR8050, F-94010 Creteil, France}
\email{seuret@u-pec.fr}

 
%
%
\begin{abstract}
In this article, we investigate the bivariate \ml analysis of pairs of Borel probability measures. We prove that, contrarily to what happens in the univariate case, the natural extension of the Legendre spectrum does not yield an upper bound for the  bivariate \ml spectrum. For this we build a pair of measures for which the two spectra have disjoint supports. Then we study the bivariate \ml behavior of an archetypical pair of   randomly correlated  measures,  which give new, surprising,     behaviors, enriching the narrow class of measures for which such an analysis is achieved.
\end{abstract}

\subjclass[2020]{Primary: 28A78; Secondary: 28A80, 28C15, 11K55 , 60G57, 37D35.}
\keywords{Fractals, multifractals, metric theory of number expansions, random measures, Hausdorff dimension.}

\maketitle

\section{Introduction}

Our purpose here is to  further  investigate the  multivariate multifractal analysis of measures by studying the validity of a multifractal formalism, and by illustrating  the diversity of possible behaviors with the example  of  a pair of randomly correlated binomial Bernoulli cascades, which is the archetype of two correlated multifractal objects.

 Recall that the lower and upper  local dimensions of a Borel measure $\mu$ defined on $\zu$ at $x$ are respectively defined as 
$$\dimu (\mu,x)=\liminf_{r \to 0^+}\frac{\log \mu(B(x,r))}{ \log r}  \ \text{ and } \  {\overline \dim}(\mu,x)=\limsup_{r\to 0^+}\frac{\log \mu(B(x,r))}{ \log r },
$$
and $\dim (\mu,x)$ stands for the common value of  $\dimu (\mu,x)$ and $\dimo(\mu,x)$ when these quantities agree.
Then, using the Hausdorff dimension $\dim$, the multifractal spectrum 
\begin{equation}
\label{eq-defdmu}
H\mapsto D_\mu(H) =  \dim \underline E_\mu(H), \mbox{ where  } \underline E_\mu(H):=\left \{x\in[0,1] : {\underline \dim_\locloc}(\mu,x)=H\right \} 
\end{equation}
describes the distribution of the singularities of $\mu$. We emphasize that   the lower local dimension, defined for every point in the support of $\mu$, is considered here.

An  important quantity is the $L^q$-spectrum $\tau_{\mu}$ of a probability measure $\mu$ supported by $\zu$.  Define $\mathcal{D}_j $   as the set of dyadic intervals of generation $j\geq 0$, i.e. 
$$\mathcal{D}_j = \{[k2^{-j},(k+1)2^{-j}): k\in \{0,..., 2^1-1\}\},$$
and $\mathcal{D}=\bigcup_{j\geq 0}\mathcal{D}_j$ the set of dyadic intervals of $\zu$. Then the $L^q$-spectrum $\tau_{\mu}$ is  
\begin{equation}
\label{deftaumu}
\tau_\mu(q) = \liminf_{j\to +\infty} \tau_{\mu,j}(q) , \ \mbox{ where } \tau_{\mu,j}(q) = \frac{-1}{j} \log_2 \sum_{I\in \mathcal{D}_j} \mu(3I)^q,
\end{equation}
where for any interval $I$, $3I$ stands for the interval with same center as $I$ and radius three times that of $I$.
It is standard that for Borel measures $\mu$ on $\zu$ (and in $\zu^d$), the multifractal spectrum $D_\mu$ is bounded above by the Legendre spectrum of the $L^q$-spectrum, i.e. one always has $D_\mu(H)\leq \tau_\mu^*(H) := \inf_{q\in \R} (q H-\tau_\mu(q))$ \cite{FrischParisi,Halsey,BRMICHPEY}.
When the equality $D_\mu(H)= \tau_\mu^*(H)$ holds, $\mu$ is said to satisfy the multifractal formalism at $H$.

The investigation of \ml properties of measures (and functions) is now a standard issue in fractal geometry, ergodic theory  and geometric measure theory, and has been investigated  for large classes of deterministic and random Borel probability measures. Among many examples, let us mention  Gibbs measures invariant by various dynamical systems \cite{BRMICHPEY,BBP,FAN,FENG2007,BarralFeng2}, self-similar and self-affine measures \cite{LauNgai,FengLau,Barral-Feng-21}, Mandelbrot cascades \cite{BMPSPUM}, Baire typical measures \cite{BuS2,Bay}. This is still an active research field, with still open conjectures regarding the value of $D_\mu$ and the validity of the \ml formalism especially for self-similar and self-affine measures, and connexions with many  mathematical fields. 

The validity of the multifractal formalism reflects a deep relationship between local behaviors and geometrical properties of the measure $\mu$  and global statistics (the $L^q$-spectrum) computed on $\mu$ at various scales. In addition, this key relationship allows the multifractal spectrum to be estimated. Indeed, while the \ml spectrum is not computable on (discretized, finite) data,  several robust algorithms have been developed to estimate the $L^q$-spectrum, see \cite{Wendt2007d} and the references therein for instance. Then, when the \ml formalism holds, the \ml spectrum becomes accessible as the Legendre transform of the estimated $L^q$-spectrum. This \ml spectrum has  proved to be a relevant quantity  in signal and image processing, both as an analyzing  and as a classification method. It is a widely used tool in many applications, for instance physics and geophysics, EEG and ECG analyses, urban data, .. and it is important to continue developing random and deterministic models with properties fitting those of real  data signals and images.

\medskip

Our purpose here is to consider the simultaneous \ml analysis of measures, which is referred to as multivariate \ml analysis. More precisely, for a family of measures $(\mu_i)_{i=1,...,n}$ supported on $\zu$, this   consists first  in considering   the sets
\begin{equation}
\label{defemu1mu2}
\underline E_{\mu_1,...,\mu_n}(H_1,...,H_n)=\{x\in \zu: \dimu (\mu_i,x)=H_i \mbox{ for all }i\in\{1,...,n\}\}
\end{equation}
and then in computing the associated multivariate \ml spectrum
\begin{equation}
\label{defDmu1mu2}
D_{\mu_1,...,\mu_n}(H_1,...,H_n)= \dim \underline E_{\mu_1,...,\mu_n}(H_1,...,H_n).            
\end{equation} 

Not only  is it a natural question to analyze the $(\mu_i)$ simultaneously from a mathematical standpoint, but also it is a key issue in many applications where phenomena consist  naturally  of several signals/images. For instance, in neurosciences, EEG, scanners and MRI are built from several views of the same object; in geography, various measures and maps (population density, housing, revenues, altitude,...) are needed to fully describe an area or a city, An important challenge is then, on top of describing each set of data, to uncover their correlation and the way the influence each other.  The   multivariate \ml spectrum \eqref{defDmu1mu2}, and the  multivariate $L^q$-spectrum   associated with it, are   relevant indicators to achieve such studies by unfolding existing correlations between fractal and \ml parameters.

\smallskip

This line of research is quite recent, even if there is already a collection of mathematical results in this area.
This has been   investigated in \cite{BarrSau,BarSauSchmel}   for two Gibbs measures $\mu$ and $\nu$ invariant by the same dynamical system: there,  the estimation of the Hausdorff dimension of the sets $ \{ x\in \zu: \dim (\mu,x)=H_1,\dim(\nu,x)=H_2\}$ (where the limit local dimension is used) is achieved. Such studies are  more involved in general than the multifractal analysis of a single measure, for at least two reasons. First, since the level sets in \eqref{defemu1mu2} can be rewritten as $\underline E_{\mu_1,...,\mu_n}(H_1,...,H_n) = \bigcap_{i=1}^n \underline E_{\mu_i}(H_i)$, performing the multivariate multifractal  analysis amounts to investigating the size of intersections of (fractal-like) sets, which is known to be a delicate issue (see  for instance  Marstrand slicing theorems \cite{Marstrand54,Mattila75,Fal93,MATTILA} or recent progresses on intersections of Cantor sets \cite{Shmerkin_annals,Wu_annals}. Second,  there is no multifractal formalism yielding an easy upper bound for the bivariate spectrum \cite{Multivariate-PRSA,ACHA2018}.  Indeed, consider the natural extension of the $L^q$-spectrum \eqref{deftaumu} to the multivariate setting:
\begin{eqnarray}
\label{deftaumu3}
 \tau_{\mu_1,...,\mu_n}(q_1,...,q_n)  & = &  \liminf_{j\to +\infty} \tau_{\mu_1,...,\mu_n,j}(q_1,...,q_n), \\ \nonumber
 \ \mbox{ where } \   \tau_{\mu_1,...,\mu_n,j}(q_1,...,q_n) & =&  \frac{-1}{j} \log_2 \sum_{I\in \mathcal{D}_j}  \mu_1(3I)^{q_1}....\mu_n(3I)^{q_n}.
\end{eqnarray}
 The  Legendre transform  of $  \tau_{\mu_1,...,\mu_n}$ and the multivariate associated \ml formalism are defined as follows:
\begin{definition}
A family of probability measures $(\mu_i)_{i=1,...,n}$ is said to satisfy the   \ml formalism at $(H_1,...,,H_n)$ whenever 
\begin{equation}
\label{bi-form}
D_{\mu_1,...,\mu_n}(H_1,...,H_n) = \tau_{\mu_1,...,\mu_n}^*(H_1,...,H_n)  := \inf_{q_1,...,q_n\in \R} \Big(\sum_{i=1}^n q_iH_i  - \tau_{\mu_1,...,\mu_n}(q_1,...,q_n)  \Big).
\end{equation}
\end{definition}

As said before, the   multifractal formalism is an important issue, and a large literature is devoted to establishing its validity on typical, self-similar, self-affine, deterministic and random measures. However it is essentially only concerned with the univariate situation.

Barreira, Saussol and Schmeling studied some variational principles for a finite number of invariant measures valid for the limit local dimension. 
Olsen investigated in \cite{Olsen2,Olsen_2005} the situation where the $\mu_i$ are self-similar measures satisfying the Open Set Condition, and computed the spectrum $D_{\mu_1,...,\mu_n}$ but {again using the limit local dimensions}, not the lower lower dimension of $\mu_i$, which is an interesting but somehow simpler issue. In particular, he proves that a multivariate formalism holds with these quantities when the measures satisfy the Strong Open Set Condition - this will absolutely be not the case   in our situation. See also \cite{Meneveau90,Zhi} where some preliminary results are proved regarding multivariate analysis.

More generally, several authors have investigated the very close problem of determining, for a given compact $C\in \R^n$, the size of the set
$$\Big\{x\in \zu: A  \Big(\Big(\frac{\log \mu_1(B(x,2^{-j}))}{\log j}, ... ,\frac{\log \mu_n(B(x,2^{-j}))}{\log j}\Big)_{j\geq 1}\Big)\in C\Big\},$$
where the notation  $A(((z_{1,j},...,z_{n,j})_{j\geq 1}) )$ stands for the set of accumulation points of the sequence $((z_{1,j},...,z_{n,j})_{j\geq 1}) $ in $\R^n$. For instance, Attia and Barral \cite{Attia-Barral} found the dimension of such sets when the $\mu_i$ is the probability distribution associated with  branching random walks on trees. These situations do not cover our case, nor do they complete a similar analysis of the Legendre transform of the $L^q$-spectrum. More recently, results have been obtained in \cite{ACHA2018,Multivariate-PRSA} concerning multivariate  \ml analysis of families of functions and binomial measures, we will come back to these results below.

\mk
 
 As said before, our purpose is twofold. First we investigate the relationship between $ D_{\mu_1,...,\mu_n}$ and $ \tau^*_{\mu_1,...,\mu_n}$, and then we focus on a pair of random correlated measures, to illustrate how complex and interesting the bivariate \mk analysis can be.

\medskip

Our first result is that in general, the supports of $ D_{\mu_1,...,\mu_n}$ and $ \tau^*_{\mu_1,...,\mu_n}$ may even not intersect.
\begin{theorem}
\label{th-supports}
There exist two probability measures $\mu$ and $\nu$ supported by $\zu$ such that Supp($D_{\mu,\nu})\cap$Supp($\tau_{\mu,\nu}^*)=\emptyset$.
\end{theorem}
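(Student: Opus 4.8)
The plan is to build a pair whose bivariate spectrum sits at the single point $(\tfrac12,\tfrac12)$ while the two local dimensions are forced to oscillate in perfect antiphase along a lacunary sequence of scales, so that $\tau^*_{\mu,\nu}$ never sees that point. I would work on the base-$4$ grid (a base-$4$ interval of generation $m$ is the dyadic interval of generation $2m$, so this only rescales $j$ and leaves $\tau_{\mu,\nu}$ unchanged), and write the $4$-ary digit of $x$ at generation $i$ as a pair $(a_i,b_i)\in\{0,1\}^2$. Fix a partition $\N=\mathcal A\sqcup\mathcal B$ into infinite sets of generations, arranged in successive blocks whose lengths grow fast enough that $A(j)/j\to0$ along the endpoints of the $\mathcal B$-blocks and $A(j)/j\to1$ along those of the $\mathcal A$-blocks, where $A(j)=\#(\mathcal A\cap[1,j])$ and $B(j)=j-A(j)$. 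Let $\mu$ ignore the $b_i$, split uniformly in $a_i$ on $\mathcal A$, and freeze to $a_i=0$ on $\mathcal B$ (mass $\tfrac12,\tfrac12$ on the two children with $a_i=0$, mass $0$ on $a_i=1$); let $\nu$ be symmetric under $a\leftrightarrow b$, $\mathcal A\leftrightarrow\mathcal B$. Then $K:=\mathrm{supp}\,\mu\cap\mathrm{supp}\,\nu$ is a Cantor set with two admissible digits per generation, so $\dim_H K=\tfrac12$.

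First I would compute $\mathrm{Supp}(D_{\mu,\nu})$. On $K$ the masses are \emph{deterministic}: every generation-$j$ interval $I_j\subseteq K$ has $\mu(I_j)=4^{-A(j)}2^{-B(j)}$ and $\nu(I_j)=4^{-B(j)}2^{-A(j)}$, so $\frac{\log\mu(I_j(x))}{\log|I_j|}=\frac{1+A(j)/j}{2}$ and $\frac{\log\nu(I_j(x))}{\log|I_j|}=\frac{2-A(j)/j}{2}$. These add to $\tfrac32$ at every scale, yet each has liminf $\tfrac12$ (realised along opposite families of block endpoints); a routine ball-versus-interval comparison then gives $\dimu(\mu,x)=\dimu(\nu,x)=\tfrac12$ for every $x\in K$. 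Off $K$ one of the two measures vanishes near $x$, producing no finite exponent. Hence $\underline E_{\mu,\nu}(H_1,H_2)$ is nonempty only at $(\tfrac12,\tfrac12)$, where it equals $K$, and $\mathrm{Supp}(D_{\mu,\nu})=\{(\tfrac12,\tfrac12)\}$.

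It remains to show $(\tfrac12,\tfrac12)\notin\mathrm{Supp}(\tau^*_{\mu,\nu})$, which is all that disjointness needs. For $q_1,q_2>0$ every interval off $K$ contributes a vanishing factor, so up to a subexponential factor the sum in \eqref{deftaumu3} equals $2^j(4^{-A}2^{-B})^{q_1}(4^{-B}2^{-A})^{q_2}$; taking the liminf over $A(j)/j\in[0,1]$ gives $\tau_{\mu,\nu}(q_1,q_2)=-\tfrac12+\tfrac12\min(q_1+2q_2,\,2q_1+q_2)$ on the first quadrant, in particular $\tau_{\mu,\nu}(q,q)=\tfrac{3q}{2}-\tfrac12$. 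Letting $q\to+\infty$ along the diagonal, $\tau^*_{\mu,\nu}(\tfrac12,\tfrac12)\le\inf_{q>0}\big(q-(\tfrac{3q}2-\tfrac12)\big)=-\infty$; the two linear pieces in fact show that $\mathrm{Supp}(\tau^*_{\mu,\nu})$ is carried by the segment $\{H_1+H_2=\tfrac32\}$ joining $(\tfrac12,1)$ and $(1,\tfrac12)$. Since this misses the sum-$1$ point $(\tfrac12,\tfrac12)$, the two supports are disjoint.

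The conceptual crux — and the point to state carefully — is the antiphase mechanism: it is precisely the strict gap $\liminf_j(\mu\text{-exponent})+\liminf_j(\nu\text{-exponent})=\tfrac12+\tfrac12<\tfrac32=\liminf_j(\text{their sum})$ that drives the two supports apart, and one must choose the block lengths lacunary enough that both $A(j)/j\to0$ and $A(j)/j\to1$ are genuinely attained. The residual technicalities — the ball/interval comparison for $\dimu$, and checking that the $3I$-enlargement and the $O(1)$ neighbours of the $K$-intervals perturb the $L^q$-sum only subexponentially, so that the diagonal value $\tau_{\mu,\nu}(q,q)$ is intact — are routine and do not affect the argument.
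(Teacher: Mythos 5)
Your construction is correct and is essentially the paper's own argument: the paper likewise alternates, over lacunary blocks of scales, a uniform four-way splitting (its scheme (P1)) against a two-of-four splitting (its scheme (P2)) so that each measure attains its liminf exponent $1/2$ only at scales where the other is Lebesgue-like, forcing the finite part of $\mathrm{supp}(D_{\mu,\nu})$ onto $(\tfrac12,\tfrac12)$ while $\tau^*_{\mu,\nu}$ is carried by a region where $H_1+H_2\ge\tfrac32$. Your pure-antiphase variant is in fact slightly cleaner than the paper's four-phase version: the masses on $K$ are exactly deterministic, and since $\mu(I)\nu(I)=2^{-3m}$ for every contributing base-$4$ interval of generation $m$, one gets $\tau_{\mu,\nu}(q,q)\ge\tfrac{3q}{2}-1$ (hence $\tau^*_{\mu,\nu}(\tfrac12,\tfrac12)=-\infty$) no matter how many intervals have $3I$ meeting both supports, so the one step you defer as routine genuinely cannot break the argument.
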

Theorem \ref{th-supports} immediately extends to higher-dimensional measures and to the multivariate case (with more than two measures), by just considering the measures $\mu$ and $\nu$ of the theorem as higher dimensional measures. Hence contrarily to what happens for one single measure, the Legendre transform of the $L^q$-spectrum does not provide an upper bound for the \ml spectrum - this is due to the fact that in the sum \eqref{deftaumu3} defining $ \tau_{\mu_1,...,\mu_n,j}$, the quantities $\mu_i(3I)^{q_i}$ may not be large (or small) simultaneously at the same generations. 

Let us mention that an analog of Theorem \ref{th-supports} was proved for functions in \cite{Multivariate-PRSA}. Observe also that   Theorem \ref{th-supports} does not disqualify  the Legendre spectrum as an irrelevant quantity, but rather emphasizes that for the \ml formalism to be verified, additional conditions (to be identified) should hold.

\mk

Our second objective is to perform   the bivariate multifractal analysis of a simple but rich model of  pair of random  correlated Bernoulli measures, hence enriching the very narrow family of  objects for which the bivariate \ml analysis is performed. Beyond its own interest (and the somewhat surprising results obtained below), this is justified by the fact that in many physical models, to detect particular events  one must simultaneously analyze two or more correlated signals, and it is a key question to determine how the correlation parameters can influence the bivariate Legendre and \ml spectra, and reciprocally whether one can deduce from the spectra some relevant information about the correlations between signals.
\medskip

Let us recall how the Bernoulli binomial measure  $\mup$ is built.  
 
Fix $p\in \zu$ and    $\mu_p (\zu)=1$. We proceed iteratively as follows. Assuming that $\mu_p(I)$ is fixed for every $I\in \mathcal{D}_j$, then calling respectively  $I_0 \in \mathcal{D}_{j+1}$ and $I_1\in \mathcal{D}_{j+1}$ the left  and right dyadic subintervals of $I$, one sets $\mu_p(I_0) = p\mu_p(I)$ and $\mu_p(I_1)=(1-p)\mu_p(I)$. This process defines iteratively the (unique) Borel probability measure $\mu_p$ on $\zu$, called the binomial measure with parameter $p$. The measure $\mu_p$ can also be viewed as the Gibbs measure invariant by the shift $x\mapsto 2x$ on the torus $\mathbb{T}^1$ associated with the potential $\phi(x) = p{\bf 1\!\!\!1}_{[0,1/2)}(x)+(1- p){\bf 1\!\!\!1}_{[1/2,1)}(x)$.

\mk

The complete bivariate multifractal analysis of $(\mu_{p_1},\mu_{p_2})$ is performed in \cite{ACHA2018,Multivariate-PRSA}, for every choice of the two  parameters   $p_1,p_2\in (0,1)$. It leads to surprising results: first, the multifractal spectrum drastically depends on the relative position of $p_1$ and $p_2$ with respect to 1/2. Second, there is no  obvious \ml formalism that holds for the pair  $(\mu_{p_1},\mu_{p_2})$.  
More precisely, in \cite{ACHA2018,Multivariate-PRSA} it is proved that not only the multifractal formalism $ D_{\mu_{p_1},\mu_{p_2}}= \tau^*_{\mu_{p_1},\mu_{p_2}} $ does not hold when $p_1<1/2<p_2$, but even worse the  inequality converse to the  one true in the univariate case   holds: $ \tau_{\mu_{p_1},\mu_{p_2}}^*(H_1,H_2) < D_{\mu_{p_1},\mu_{p_2}}(H_1,H_2)$ for most pairs $(H_1,H_2)$. Still, the supports of the Legendre transform $ \tau_{\mu_{p_1},\mu_{p_2}}^*$ and $ D_{\mu_{p_1},\mu_{p_2}}$ coincide, and one could think that there is a relationship between the two supports.

The randomness considered in this paper consists in changing, at random  generations, the parameter $p_1$ by a parameter $p_2$.  For instance, choosing $p_2=1-p_1$ amounts to  flip  at random generations the left and right sides of the construction in $\mu_{p_1}$.

More precisely, let $\eta\in (0,1)$ and $(Y_j)_{j\geq 1}$ be a sequence of i.i.d. random Bernoulli variables of parameter $\eta$ on a fixed probability space.
Consider the random set of indices:
\begin{equation}
\label{defA}
\mathcal{A} = \{j\in \N^*: Y_j=1\}. 
\end{equation}

Now fix $p_1,p_2\in (0,1)$, and  consider the random measure $\nue$ defined as follows. Starting with $\nue(\zu)=1$, we modify  the iterative construction of the binomial measure by changing the parameters at random generations as follows:
\begin{itemize}
\item
when $Y_j=1$, i.e. $j\in \mathcal{A}$, for every $I\in \mathcal{D}_{j-1}$,   the weight $p_1\nue(I)$ is assigned to the left subinterval of $\mathcal{D}_j$ of $I$, and $(1-p_1)\nue(I)$ to the right subinterval of $I$.
\item
when $Y_j=0$, i.e. $j\notin \mathcal{A}$, for every $I\in \mathcal{D}_{j-1}$,   the weight $p_2\nue(I)$  is assigned to the left subinterval of $\mathcal{D}_j$ of $I$, and $(1-p_2)\nue(I)$ to the right subinterval of $I$.
\end{itemize}
The limit object is a random probability measure $\nue$ supported on $\zu$, depending on $p_1$, $p_2$, and  $\eta$ via the sequence $(Y_j)_{j\geq 1}$. The multifractal analysis of $\nue$ itself is  quite standard, let us mention in particular \cite{FanWeighted} which considers weighted ergodic averages, of which the \ml analysis of $\nue$  can be viewed as a particular situation. We focus on the bivariate multifractal  properties of the pair ($\mu_{p_1},\nue)$.

Before stating our theorems, let us quickly recall standard results on the binomial measures and introduce some  notations (see also Section \ref{sec-bernoulli} for more information). For $p\in (0,1)$, the scaling function of the binomial measure $\mu_p$ is 
\begin{equation}
\label{taumup}
\mbox{for every  integer } j\geq 1, \ \ \tau_{\mu_p,j}(q) = \tau_{\mup}(q)=- \log_2(p^q+(1-p)^q),
\end{equation}
 the spectrum $D_{\mu_p}$ is concave  on the interval $[H_{p,\min},H_{p,\max}]\subset]0,+\infty[ $, where $(H_{p,\min},H_{p,\max}) = (-\log_2 p,\log_2(1-p))$ when $p>1/2$ and $(H_{p,\min},H_{p,\max}) = (-\log_2 (1-p),\log_2 p)$ when $p < 1/2$.  In addition, $D_{\mu_p}$ is real analytic on the interior of its support, $D_{\mu_p}(H_{p,\min}) = D_{\mu_p}(H_{p,\max}) = 0$,  and  $D_{\mu_p }$ is symmetric with respect to  $H_{p,s}:=(H_{p,\min}+H_{p,\max})/2$, where it reaches its maximum 1.  The dimension of $\mu_p$ (also its entropy) equals $\dim \mup =H_{p,e}:= -p\log_2p-(1-p)\log_2(1-p)$. Finally, $\mu_p$ satisfies the multifractal formalism:  $D_{\mu_p}(H) = \tau_{\mu_p}^*(H)$ for every $[H_{p,\min},H_{p,\max}]$.

\begin{center}
\begin{figure}
 \includegraphics[width=7.8cm,height=7cm]{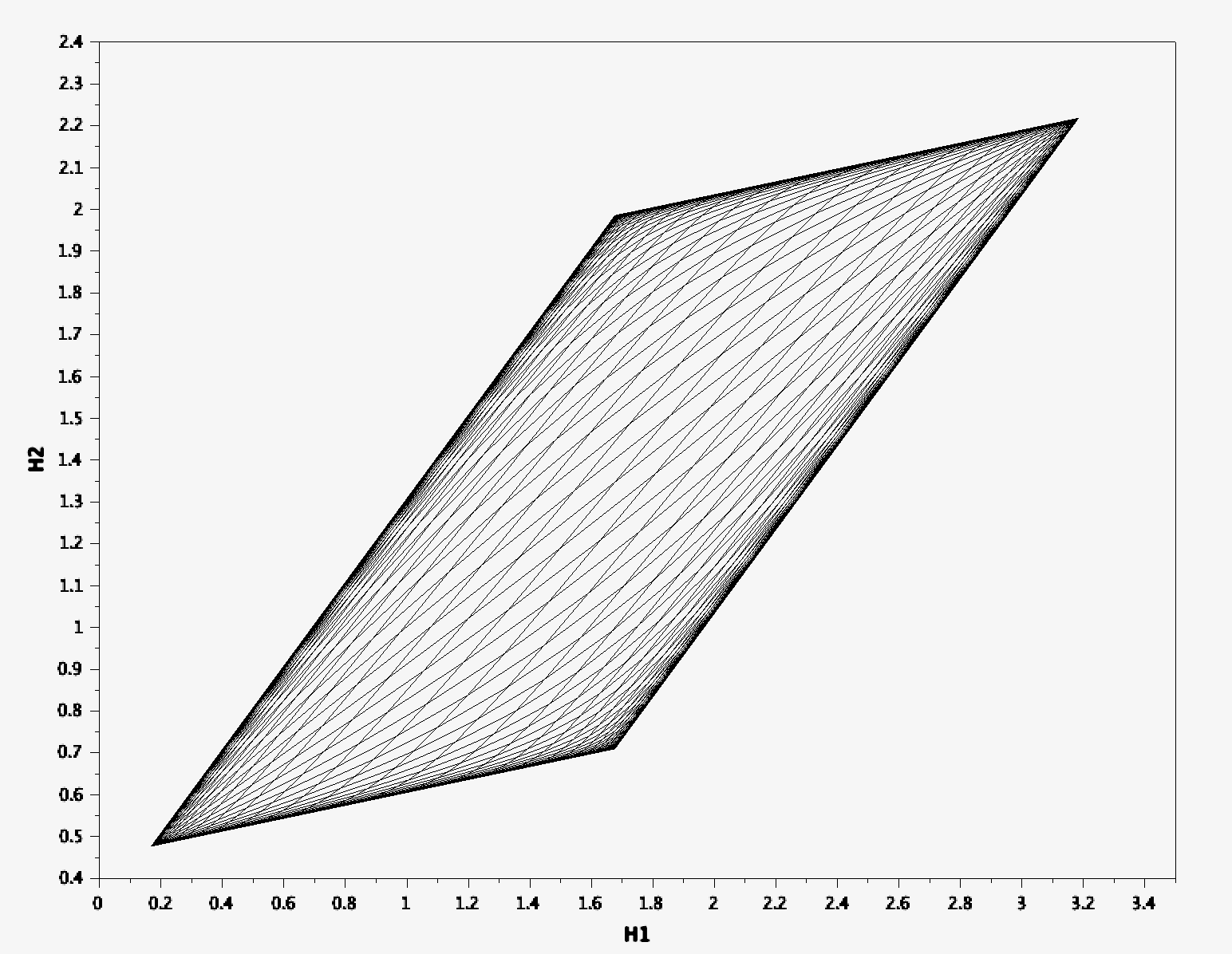} \  \ 
 \includegraphics[width=7.8cm,height=7cm]{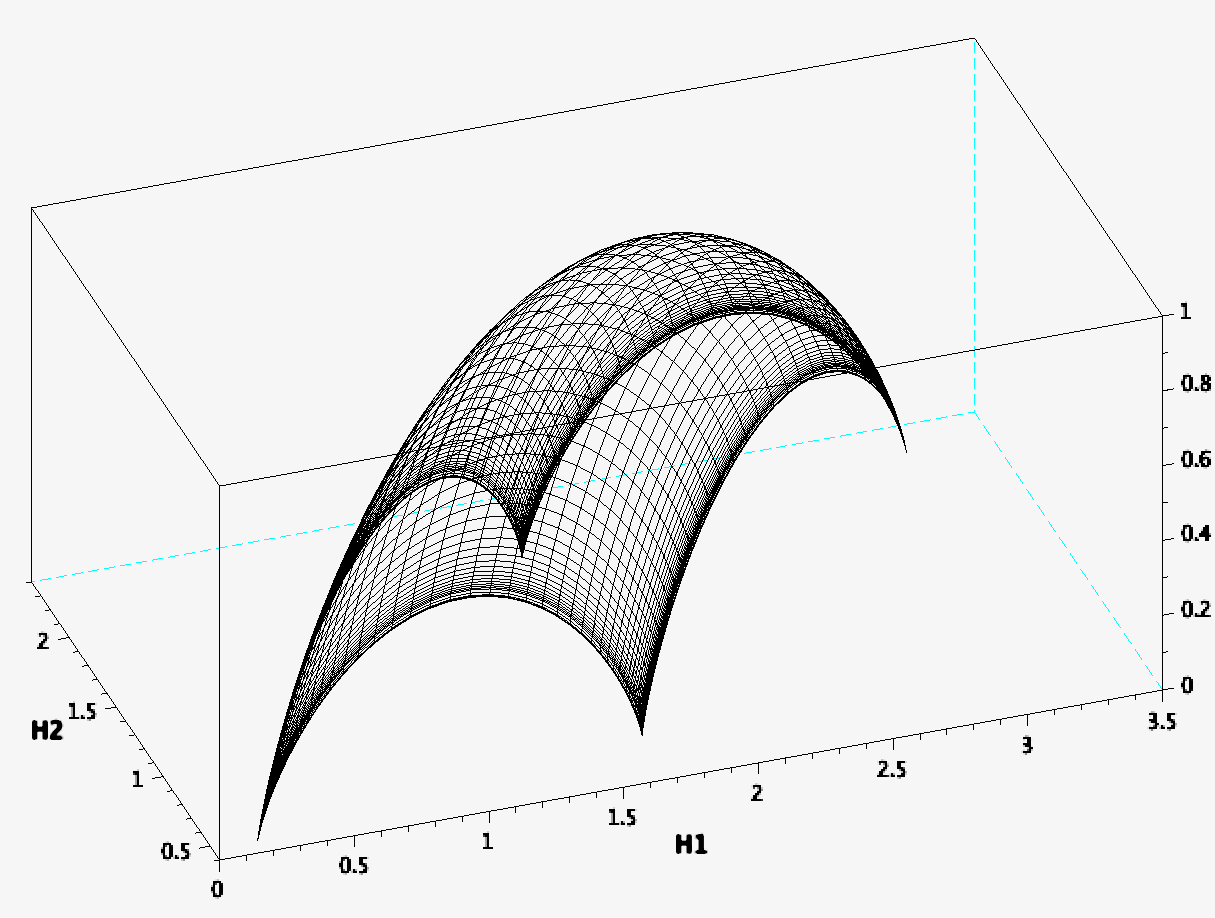}   
  \caption{Case  $0<p_1,p_2<1/2$ or  $1/2<p_1,p_2<1$: {\bf Left:} Support of $D_{\muu,\nue}$. {\bf Right:}   Bivariate multifractal spectrum $D_{\muu,\nue}$ of $(\mu_{p_1}, \nue)$.}
\label{fig-correlee-eta}
 \end{figure}
 \end{center}

Next, let us introduce for $q_1,q_2,H\in \R$ and $\eta\in [0,1]$ the following quantities: 
\begin{eqnarray*}
\label{defT}
   \Tau(q_1,q_2)  & =&  - \log_2 (p_1^{q_1}p_2^{q_2}+ (1-p_1)^{q_1}(1-p_2)^{q_2})\\
\label{defTe}
   \Taue(q_1,q_2)  & =&  \eta  \tau_{\mu_{p_1}}(q_1+q_2) + (1-\eta)   \Tau(q_1,q_2) \\
\label{defTet}
   \Tauet(q_1,q_2)  & =&    -q_1\log_2(1-p_1)-q_2( \eta \log_2p_1+(1-\eta)\log_2p_2) .
 \end{eqnarray*}

Our second   theorem deals with the case where the two parameters $(p_1,p_2)$ are both greater or both larger than 1/2.
\begin{theorem}
\label{mainth1}
Let $0<p_1,p_2<1/2$ or  $1/2<p_1,p_2<1$.
Let $\eta\in[0,1]$, and consider the pair of measures  $(\mu_{p_1}, \nue)$.
With probability one:
\begin{enumerate}

\item
The bivariate $L^q$-spectrum  of $(\mu_{p_1},\nue)$  is 
$
  \tau_{\mu_{p_1},\nue} = \Taue
$.

\sk \item
The pair $(\mu_{p_1}, \nue)$ satisfies the bivariate multifractal formalism everywhere on their common support, which is a deterministic parallelogram $\Pzeroeta$.
\end{enumerate}

\end{theorem}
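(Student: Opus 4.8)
The plan is to treat the two assertions separately, deriving (1) by a direct cascade computation and deducing (2) from a Gibbs-measure lower bound matched by a covering upper bound, the latter being the delicate point.

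For (1), I would exploit that both measures are produced by a multiplicative cascade on the dyadic tree, so that the defining sum factorizes over generations. Writing $\ep_1\cdots\ep_j$ for the binary address of $I\in\mathcal{D}_j$ and temporarily replacing $3I$ by $I$ (a substitution that, as usual for cascade measures, changes $\tau_{\cdot,j}$ only by a $o(1)$ term, and which I would justify through the standard comparison $\sum_I\muu(3I)^{q_1}\nue(3I)^{q_2}\asymp\sum_I\muu(I)^{q_1}\nue(I)^{q_2}$ up to subexponential factors), the sum $\sum_{I\in\mathcal{D}_j}\muu(I)^{q_1}\nue(I)^{q_2}$ splits as a product over $i\le j$ of the factor $p_1^{q_1+q_2}+(1-p_1)^{q_1+q_2}$ when $i\in\mathcal{A}$ and of $p_1^{q_1}p_2^{q_2}+(1-p_1)^{q_1}(1-p_2)^{q_2}$ when $i\notin\mathcal{A}$. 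Taking $-\tfrac1j\log_2$ gives
\begin{equation*}
\tau_{\muu,\nue,j}(q_1,q_2)=\frac{\#(\mathcal{A}\cap[1,j])}{j}\,\tau_{\muu}(q_1+q_2)+\Big(1-\frac{\#(\mathcal{A}\cap[1,j])}{j}\Big)\Tau(q_1,q_2)+o(1).
\end{equation*}
Since $\#(\mathcal{A}\cap[1,j])/j\to\eta$ almost surely by the strong law of large numbers, on this single almost sure event the limit exists for every $(q_1,q_2)$ and equals $\Taue(q_1,q_2)$; in particular $\tau_{\muu,\nue}$ is a genuine limit, not merely a liminf, which is the feature that makes the formalism accessible.

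For the lower bound in (2), I would, for each $(q_1,q_2)$, introduce the inhomogeneous Bernoulli measure $m=m_{q_1,q_2}$ on $\zu$ choosing digit $0$ with probability $\alpha=\frac{p_1^{q_1+q_2}}{p_1^{q_1+q_2}+(1-p_1)^{q_1+q_2}}$ at generations $i\in\mathcal{A}$ and with probability $\beta=\frac{p_1^{q_1}p_2^{q_2}}{p_1^{q_1}p_2^{q_2}+(1-p_1)^{q_1}(1-p_2)^{q_2}}$ at generations $i\notin\mathcal{A}$, independently. Using $\#(\mathcal{A}\cap[1,j])/j\to\eta$ together with the strong law for the digits, $m$-almost every $x$ has frequency of $0$'s tending to $\alpha$ inside $\mathcal{A}$ and to $\beta$ inside $\mathcal{A}^c$; a direct computation then shows that $\dimu(\muu,x)$ and $\dimu(\nue,x)$ are genuine limits equal to $H_1=\partial_{q_1}\Taue(q_1,q_2)$ and $H_2=\partial_{q_2}\Taue(q_1,q_2)$, while $\dim(m,x)=\eta h(\alpha)+(1-\eta)h(\beta)=(\Taue)^*(H_1,H_2)$, with $h(t)=-t\log_2t-(1-t)\log_2(1-t)$. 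The mass distribution principle then yields $D_{\muu,\nue}(H_1,H_2)\ge(\Taue)^*(H_1,H_2)$. As $(q_1,q_2)$ ranges over $\R^2$, the pair $(H_1,H_2)=\nabla\Taue(q_1,q_2)$ describes the interior of the affine image of $\zu^2$ under the map sending a pair of digit frequencies to $(H_1,H_2)$, which is precisely the parallelogram $\Pzeroeta$; this identifies the deterministic common support and shows that $(\Taue)^*$ coincides on it with the concave entropy spectrum $\eta h(\alpha)+(1-\eta)h(\beta)$.

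The crux is the matching upper bound $D_{\muu,\nue}(H_1,H_2)\le(\Taue)^*(H_1,H_2)$, which cannot be quoted from a general principle since, as Theorem \ref{th-supports} shows, no such bound holds for arbitrary pairs. I would run the covering argument: fixing $(H_1,H_2)$ and a near-optimal $(q_1,q_2)$, I would cover $\underline E_{\muu,\nue}(H_1,H_2)$ by the dyadic intervals on which $\muu(3I)\approx2^{-jH_1}$ and $\nue(3I)\approx2^{-jH_2}$, and bound their number at scale $j$ by $2^{j(q_1H_1+q_2H_2-\tau_{\muu,\nue,j}(q_1,q_2))+o(j)}$, using $\tau_{\muu,\nue,j}\to\Taue$. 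The obstruction is that a point of $\underline E_{\muu,\nue}(H_1,H_2)$ satisfies the two-sided window condition for each measure only along a subsequence of scales, and these two subsequences need not meet, which is exactly the mechanism behind Theorem \ref{th-supports}. Here the hypothesis that $p_1,p_2$ lie on the same side of $1/2$ is decisive: it makes $\muu(I_j(x))$ and $\nue(I_j(x))$ co-monotone functions of the digit frequencies (both are largest when $0$'s are rarest), so that the scales realizing $\dimu(\muu,x)=H_1$ and those realizing $\dimu(\nue,x)=H_2$ can be taken common, up to a set of scales whose contribution carries strictly smaller dimension. Establishing this alignment---equivalently, that the exceptional misaligned points form a set of dimension below $(\Taue)^*(H_1,H_2)$---is the main technical obstacle; once it is in place, the four-quadrant covering closes the upper bound and, combined with the lower bound, yields the formalism on all of $\Pzeroeta$.
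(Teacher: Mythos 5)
Your overall architecture matches the paper's (cascade factorization for the $L^q$-spectrum, a product Gibbs measure for the lower bound, a covering count for the upper bound), but two steps that you treat as routine or leave open are exactly where the work lies. First, in part (1) the replacement of $3I$ by $I$ is \emph{not} standard in the bivariate setting: the paper devotes Lemma \ref{lem-estimtau2} to it, and the comparison $\sum_I\muu(3I)^{q_1}\nue(3I)^{q_2}\asymp\sum_I\muu(I)^{q_1}\nue(I)^{q_2}$ genuinely fails when $p_1<1/2<p_2$ (this is why $\tau_{\muu,\nue}=\min(\Taue,\Tauet)\neq\Taue$ in Theorem \ref{mainth2}). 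The proof that it does hold here uses precisely the same-side hypothesis: among $\{I^-,I,I^+\}$ the interval maximizing $\muu$ is determined by digit counts alone and is therefore the \emph{same} interval that maximizes $\nue$, so $\muu(3I)$ and $\nue(3I)$ are simultaneously comparable to the values on a common neighbor. You invoke this co-monotonicity only later, for the spectrum; it is needed already for the $L^q$-spectrum.

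Second, and more seriously, your upper bound is not a proof: you correctly isolate the obstruction (the liminf windows for $\muu$ and $\nue$ are attained along possibly different subsequences of scales) and then write that establishing the alignment ``is the main technical obstacle; once it is in place'' the covering closes. That obstacle is the theorem. Moreover the mechanism you propose (forcing the two liminf-realizing subsequences to coincide) is not the one that works. The paper's argument (Lemma \ref{lem-spec-correl}) never aligns subsequences; it exploits the identities $\muu(I)=\muu(\IA)\muu(\IAc)$ and $\nue(I)=\muu(\IA)\mud(\IAc)$ together with Lemma \ref{lem-trivial}, which makes $\mud(\IAc)$ a deterministic function of $\muu(\IAc)$ through the affine map $\G$ of slope $\coefg>1$. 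Writing $\muu(I_j(x))=2^{-j^{\mathcal A}\alpha_{j,x}-j^{\mathcal{A}^c}\beta_{j,x}}$, the one-sided bounds $\eta\alpha_{j,x}+(1-\eta)\beta_{j,x}\ge H_1-\ep$ and $\eta\alpha_{j,x}+(1-\eta)\G(\beta_{j,x})\ge H_2-\ep$, valid at \emph{every} large scale, combined with the infinitely many scales at which the first is an approximate equality, localize $(\alpha_{j,x},\beta_{j,x})$ near the unique $(\alpha,\beta)$ with $\mathcal{M}(\alpha,\beta)=(H_1,H_2)$; a discretized count plus the concavity of $D_{\muu}$ then yields $\eta D_{\muu}(\alpha)+(1-\eta)D_{\muu}(\beta)$, which a separate Legendre computation (Section \ref{sec-bi-form-simple}) identifies with $(\Taue)^*(H_1,H_2)$. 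Without this step, or a worked-out substitute, both the formalism and the claim that the spectrum is $-\infty$ outside the parallelogram remain unproved.
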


Theorem \ref{mainth1} extends the results of  \cite{ACHA2018,Multivariate-PRSA} which correspond to $\eta=0$. Calling
\begin{eqnarray}
\label{def-coefg}
 \coefg & =& \frac{-\log_2(p_2)+\log_2(1-p_2)  }{-\log_2(p_1)+\log_2(1-p_1)  },
\end{eqnarray}
we will see that the support of $D_{\mu_{p_1},\nue}$ is the    parallelogram $\Pzeroeta$ generated by  four straight lines, those   with slopes $1$ or $\coefg$  passing by the points   $(H_{1,\min}, \eta H_{1,\min}+(1-\eta)H_{2,\min} )$ or 
$(H_{1,\max},  H_{1,\max}+(1-\eta)H_{2,\max}  )$, see   Section  \ref{sec-case1} for details, and  Figure \ref{fig-correlee-eta} for the graphs of $D_{\mu_{p_1},\nue}$ and $  \tau^*_{\mu_{p_1},\nue}$.

\sk
 
 The case where  $(p_1,p_2)$ are not on the same side of 1/2 is much more delicate.  
Observe that $ \coefg>0$ when $p_1$ and $p_2$ are on the same side of $1/2$, while  $ \coefg<0$ otherwise.
\begin{theorem}
\label{mainth2}
Let $0<p_1< 1/2< p_2<1/2$.
Let $\eta\in[0,1]$, and consider the pair of measures  $(\mu_{p_1}, \nue)$.

With probability one:
\begin{enumerate}

\item The bivariate $L^q$-spectrum  of $(\mu_{p_1},\nue)$  is  $ \tau_{\mu_{p_1},\nue} = \min( \Taue   ,     \Tauet)$, and the support of $ \tau_{\mu_{p_1},\nue}^* $ is is a deterministic pentagon $\mathcal{P}^\eta_1$.
\item
 The support of the bivariate \ml spectrum  $D_{\mu_{p_1},\nue}$    is  a deterministic pentagon $\mathcal{P}^\eta_2$ which is strictly larger  than $\mathcal{P}^\eta_1$ when $\eta\neq 0$ or $\eta\neq 1$.
\end{enumerate}

\end{theorem}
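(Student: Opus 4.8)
The plan is to treat the two assertions separately: first identify the $L^q$-spectrum $\tau_{\muu,\nue}$ and read off $\mathcal{P}^\eta_1$ as the effective domain of its Legendre transform, then determine $\mathrm{Supp}(D_{\muu,\nue})=\mathcal{P}^\eta_2$ directly from the pair of lower local dimensions, and finally compare the two polygons. All statements are a.s., so throughout I would upgrade the relevant limits to deterministic ones via the strong law and Borel--Cantelli applied to the i.i.d. sequence $(Y_j)$.

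\smallskip
\textbf{Part (1): the $L^q$-spectrum.} For the upper bound $\tau_{\muu,\nue}\le\Taue$ I would drop the dilation $3I$ down to $I$; since $\muu$ and $\nue$ are multiplicative along the dyadic filtration, $\sum_{I\in\mathcal{D}_j}\muu(I)^{q_1}\nue(I)^{q_2}$ factorizes over generations, each generation contributing $2^{-\tau_{\muu}(q_1+q_2)}$ when it belongs to $\AAA$ and $2^{-\Tau(q_1,q_2)}$ when it belongs to $\AAAc$, so the frequency $|\AAA\cap[1,j]|/j\to\eta$ turns the normalized log-sum into exactly $\Taue$, and re-enlarging to $3I$ only increases the sum. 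The second bound $\tau_{\muu,\nue}\le\Tauet$ comes from a single enlarged interval: for large $|q|$ the sum is dominated by the $3I$ of maximal weight, and the adjacency of dyadic intervals lets a neighbour carrying the maximal $\muu$-mass (a long run of $1$'s) sit next to the interval carrying the relevant $\nue$-mass, an extremal contribution whose scaling is the linear function $\Tauet$. The matching lower bound $\tau_{\muu,\nue}\ge\min(\Taue,\Tauet)$ follows by splitting the $3I$-sum into a bulk part (controlled by $\Taue$) and the extremal terms (controlled by $\Tauet$), each handled by concentration of $(Y_j)$. Then $\mathcal{P}^\eta_1=\mathrm{dom}\,\tau^*_{\muu,\nue}$ is the conjugate of the concave function $\min(\Taue,\Tauet)$, supported on the convex hull of the gradient range of $\Taue$ — a parallelogram $\mathcal{Q}$, the image of the frequency map recording the proportions of $0$'s among the $\AAA$- and $\AAAc$-generations — together with the constant gradient of the linear piece $\Tauet$; identifying this hull gives the pentagon $\mathcal{P}^\eta_1$.

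\smallskip
\textbf{Part (2): the support of $D$.} After the standard comparison between the balls $B(x,2^{-j})$ and the enlarged intervals $3I_j(x)$ (the dyadic exceptional points being negligible), the two lower local dimensions read as $\liminf$ of the normalized sums $\phi_1(j)=-j^{-1}\log_2\muu(I_j(x))$ and $\phi_2(j)=-j^{-1}\log_2\nue(I_j(x))$. Recording at each generation whether it lies in $\AAA$ or $\AAAc$ and whether the digit is $0$ or $1$, the vector $(j\phi_1,j\phi_2)$ is a sum of four admissible increment vectors, so the running average $(\phi_1(j),\phi_2(j))$ lies asymptotically in the same parallelogram $\mathcal{Q}$. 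For the upper bound on $\mathrm{Supp}(D_{\muu,\nue})$ I would combine this membership with the coupling forced by the $\liminf$: driving $\underline{\dim}(\muu,\cdot)$ to an extreme value forces, at the very scales realizing it, $\phi_2$ to a prescribed value, which caps the attainable $H_2$; collecting these constraints yields the bounding pentagon $\mathcal{P}^\eta_2$. For the lower bound I would build, for each target $(H_1,H_2)\in\mathcal{P}^\eta_2$, a point $x$ by concatenating long dyadic blocks that alternately steer the cumulative $\muu$-average toward $H_1$ and the cumulative $\nue$-average toward $H_2$; since a lower local dimension only has to be attained along one subsequence, the two $\liminf$'s can be realized on disjoint sequences of scales, so the two (generally conflicting) digit strategies coexist. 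Taking the blocks long and sparse enough guarantees that each running average returns to its target infinitely often while never dropping below it, whence $(\underline{\dim}(\muu,x),\underline{\dim}(\nue,x))=(H_1,H_2)$ and the level set is non-empty.

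\smallskip
\textbf{Comparison and the main obstacle.} Comparing $\mathcal{P}^\eta_1$ and $\mathcal{P}^\eta_2$ reduces to comparing finitely many vertices. Strictness is witnessed by the \emph{decoupled extremal corner} $(\min_{\mathcal{Q}}H_1,\min_{\mathcal{Q}}H_2)$, obtained by sending $\muu$ to its minimal exponent along one subsequence and $\nue$ to its minimal exponent along another: the block construction places it in $\mathrm{Supp}(D_{\muu,\nue})$, yet it is invisible to the Legendre transform precisely because $\tau_{\muu,\nue}$ only encodes the \emph{simultaneous} behaviour of the two measures at a common scale. For $\eta\in(0,1)$ this is a genuine extra point, giving the strict inclusion, while for $\eta\in\{0,1\}$ the construction degenerates and the two polygons coincide, matching the known $(\muu,\mud)$ picture. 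I expect the main obstacle to be exactly this lower-bound construction together with the sharp determination of both polygons: one must run the two-subsequence concatenation while controlling \emph{both} normalized sums from below at \emph{every} scale (not only along the realizing subsequences), quantify the $3I$/ball neighbour effects responsible for the linear piece $\Tauet$ and for the off-diagonal corners, and make every limit hold almost surely and uniformly through concentration estimates on $(Y_j)$. This decoupling of the two $\liminf$'s is the mechanism by which the bivariate spectrum escapes the Legendre bound.
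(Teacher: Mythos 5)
Your proposal follows essentially the same route as the paper: the $L^q$-spectrum is obtained by splitting the $3I$-sum into the diagonal part (giving $\Taue$) and the neighbour terms $I_{w01^\ell}/I_{w10^\ell}$ whose long runs of $1$'s and $0$'s produce the linear piece $\Tauet$, while the support of $D_{\muu,\nue}$ is derived from the two constraint systems forced at the scales realizing each $\liminf$, with the lower bound coming from an alternating block construction that realizes the two $\liminf$'s along disjoint subsequences of generations. The decoupled corner $(H_{1,\min},\eta H_{1,\min}+(1-\eta)H_{2,\min})$ you exhibit is exactly the paper's witness for the strict inclusion $\mathcal{P}^\eta_1\subsetneq\mathcal{P}^\eta_2$ when $\eta\in(0,1)$.
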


 The  values of   $D_{\mu_{p_1},\nue}$ and the pentagons are given in Section \ref{sec-case2}, and are plotted in Figures \ref{fig-pentagons}, \ref{fig-taueta2*}  and \ref{fig-taueta1*}.

 Besides the measures provided by Theorem \ref{th-supports}, the pair $(\mu_{p_1},\nue)$ is to our knowledge the first example  of pair of measures for which  the supports of $D_{\mu_{p_1},\nue}$ and $ ( \tau_{\mu_{p_1},\nue})^*$ are proved not to coincide. The proportion between the area of   $\mathcal{P}^\eta_1$  and  $\mathcal{P}^\eta_2\setminus \mathcal{P}^\eta_1$ depends on the parameter $\eta$, and somehow   characterizes  the correlation between $\mu_{p_1}$ and $\nue$.

\begin{corollary}
The pair $(\mu_{p_1}, \nue)$ does not satisfy the bivariate multifractal formalism everywhere. 
\end{corollary}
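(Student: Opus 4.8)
The plan is to obtain the Corollary as an immediate logical consequence of part (2) of Theorem \ref{mainth2}, working in the regime $0<p_1<1/2<p_2<1$ and choosing $\eta\in(0,1)$ (so that $\eta$ equals neither $0$ nor $1$). First I would recall precisely what the two supports encode. The support of the bivariate spectrum $D_{\muu,\nue}$ is the set of exponents $(H_1,H_2)$ for which the level set $\underline E_{\muu,\nue}(H_1,H_2)$ is nonempty, equivalently those for which $D_{\muu,\nue}(H_1,H_2)\geq 0$. The support of the Legendre transform $\tau^*_{\muu,\nue}$ is the set of $(H_1,H_2)$ for which $\tau^*_{\muu,\nue}(H_1,H_2)\geq 0$; since $\tau^*_{\muu,\nue}$ is a concave function, at every exponent lying outside this set one has $\tau^*_{\muu,\nue}(H_1,H_2)<0$ (or $=-\infty$).

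With this in hand, Theorem \ref{mainth2}(2) tells me that $\support(D_{\muu,\nue})=\mathcal{P}^\eta_2$ and $\support(\tau^*_{\muu,\nue})=\mathcal{P}^\eta_1$, and that the inclusion $\mathcal{P}^\eta_1\subsetneq\mathcal{P}^\eta_2$ is strict for every $\eta\in(0,1)$. Consequently the region $\mathcal{P}^\eta_2\setminus\mathcal{P}^\eta_1$ is nonempty, and I would simply fix any exponent $(H_1,H_2)$ in it. For such a point, membership in $\support(D_{\muu,\nue})$ forces $D_{\muu,\nue}(H_1,H_2)\geq 0$, while lying outside $\support(\tau^*_{\muu,\nue})$ forces $\tau^*_{\muu,\nue}(H_1,H_2)<0$. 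Hence
\[
D_{\muu,\nue}(H_1,H_2)\ \geq\ 0\ >\ \tau^*_{\muu,\nue}(H_1,H_2),
\]
so the equality \eqref{bi-form} fails at $(H_1,H_2)$, and a fortiori the pair $(\muu,\nue)$ does not satisfy the bivariate multifractal formalism everywhere.

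I do not expect any genuine obstacle at the level of the Corollary itself: all the analytic work — the exact evaluation of the bivariate $L^q$-spectrum, the identification of the two pentagons, and above all the proof that their inclusion is strict when $\eta\in(0,1)$ — is precisely the content of Theorem \ref{mainth2}. The single point worth isolating is the standard observation that a concave Legendre transform is strictly negative (or infinite) off its support; this is exactly what upgrades the mere non-coincidence of supports into a true failure of the equality $D=\tau^*$, and it is the feature that separates the present multivariate setting from the univariate one, where the inequality $D\leq\tau^*$ holds unconditionally.
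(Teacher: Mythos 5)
Your proposal is correct and follows essentially the same route as the paper: the paper also deduces the corollary immediately from Theorem \ref{mainth2}(2), observing that the formalism must fail wherever the two supports $\mathcal{P}^\eta_1\subsetneq\mathcal{P}^\eta_2$ disagree, since there $D_{\muu,\nue}\geq 0$ while $\tau^*_{\muu,\nue}<0$. Your extra remark making explicit why non-coincidence of supports forces failure of the equality is a correct (and slightly more careful) spelling-out of the same one-line argument.
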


 The fact that the bivariate multifractal formalism does not hold is immediate as soon as the supports of $ \tau_{\mu_{p_1},\nue}^* $ and $D_{\mu_{p_1},\nue}$ do not coincide. Here, the two spectra do not even coincide  on a large part of $\mathcal{P}^\eta_1$ (which is the intersection of the supports), again depending on $\eta$, see Section \ref{sec-case2} for some discussions.  
  The pictures plotted in Figure \ref{fig-taueta1*} show that the two spectra have   distinct shapes, even if they coincide on some region of the plane. More precisely:
 \begin{itemize}
 \item
 The shape of the Legendre spectrum $ \tau_{\mu_{p_1},\nue}^* $ is quite easily understandable,    remembering that  $\tau_{\mu_{p_1},\nue}=\min( \Taue   ,     \Tauet)$. Indeed,  $ \tau_{\mu_{p_1},\nue}^* $ is composed in one region of a smooth part $(\Taue)^*$ (like in the case where $p_1$ and $p_2$ are located on the same side of $1/2$), and in a    second region of a cone with center $(H_{1,\min},\eta H_{1,\max}+(1-\eta)H_{2,\min})$ and tangent to $ (\Taue)^*$. 
 \item
 The shape of $D_{\mu_{p_1},\nue}$  results from various constraints on the possible simultaneous behaviors of $\mu_{p_1}$ and $\nue$ at a given point $t\in \zu$. It is worth noticing that many possible pairs $(H_1,H_2)$ are not seen by the Legendre spectrum, since $\mathcal{P}^\eta_1$ is strictly smaller than $\mathcal{P}^\eta_2$. 
 \item
 The bivariate spectrum   $D_{\mu_{p_1},\nue}$  lies above the Legendre spectrum  $ \tau_{\mu_{p_1},\nue}^* $ on a large part of  $\mathcal{P}^\eta_1$, so investigating the region where   the \ml formalism holds is not key here.  \end{itemize}
 
 The main reason justifying  these differences is that  for a given $t\in \zu$, it may happen that  the liminf local dimensions $ \dimu (\muu,x)$ and $\dimu (\nue,x)$   may reached at different scales, in the sense that $\ds \frac{\log \mu_{p_1}(B(t,r))}{\log r} \sim  \dimu (\muu,x)$ and  $\ds \frac{\log \nue(B(t,r'))}{\log r'} \sim  \dimu (\nue,x)$ at very different   radii $r$ and $r'$ (this does not happen when considering {\em limit} local dimensions). Then,   since the computation of $\tau_{\mu_{p_1},\nue}$ \eqref{deftaumu3} only involves the measures of dyadic intervals  at the same scale $2^{-j}$,  the possible non-simultaneity implies that  $\tau_{\mu_{p_1},\nue}$ cannot catch all possible  behaviors. 
 Describing and quantifying this variety of behaviors     is   the main issue in the following and in the computations.

\begin{center}
\begin{figure} 
 \includegraphics[width=7.8cm,height=6cm]{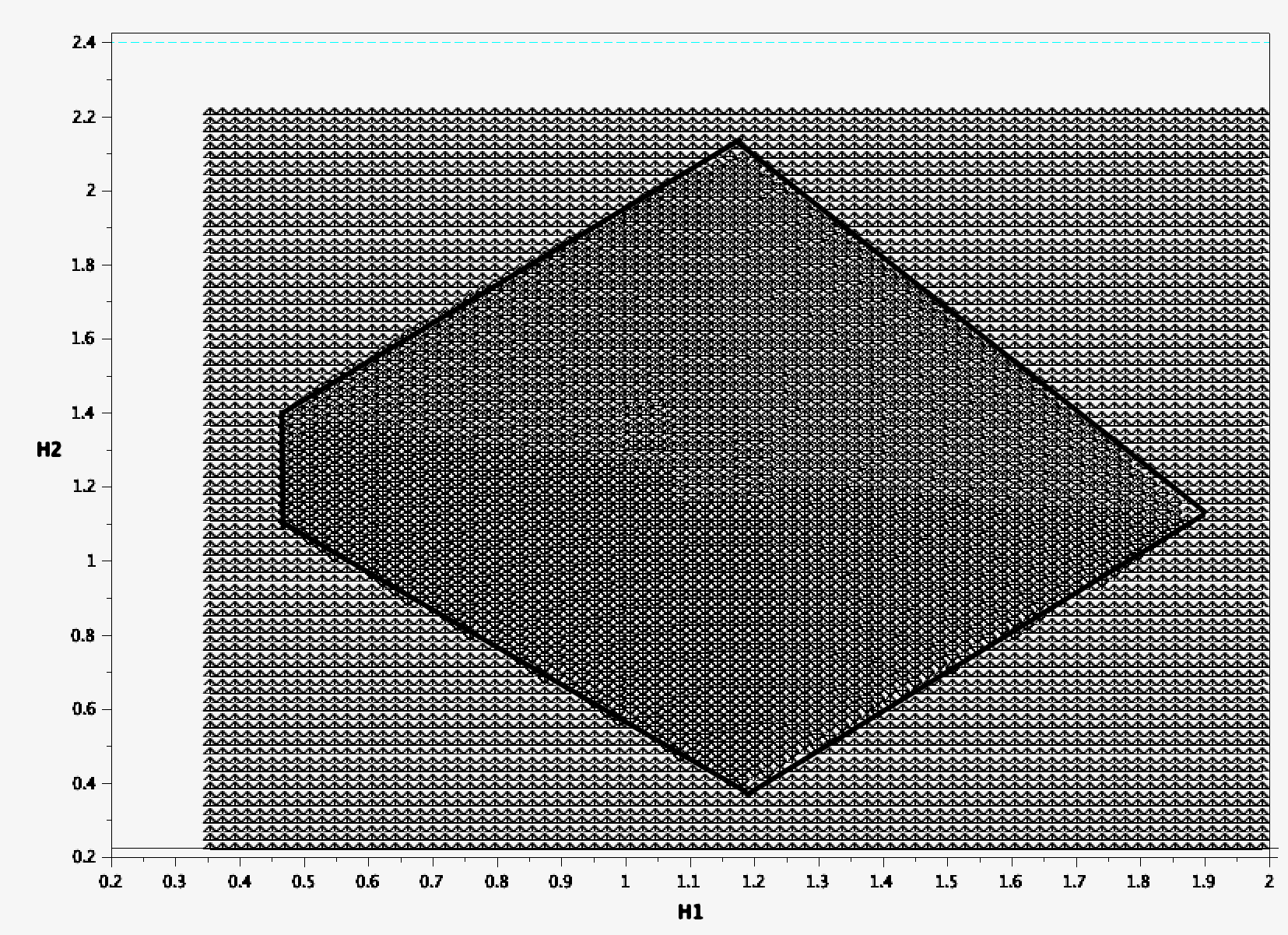} \   
 \includegraphics[width=7.8cm,height=6cm]{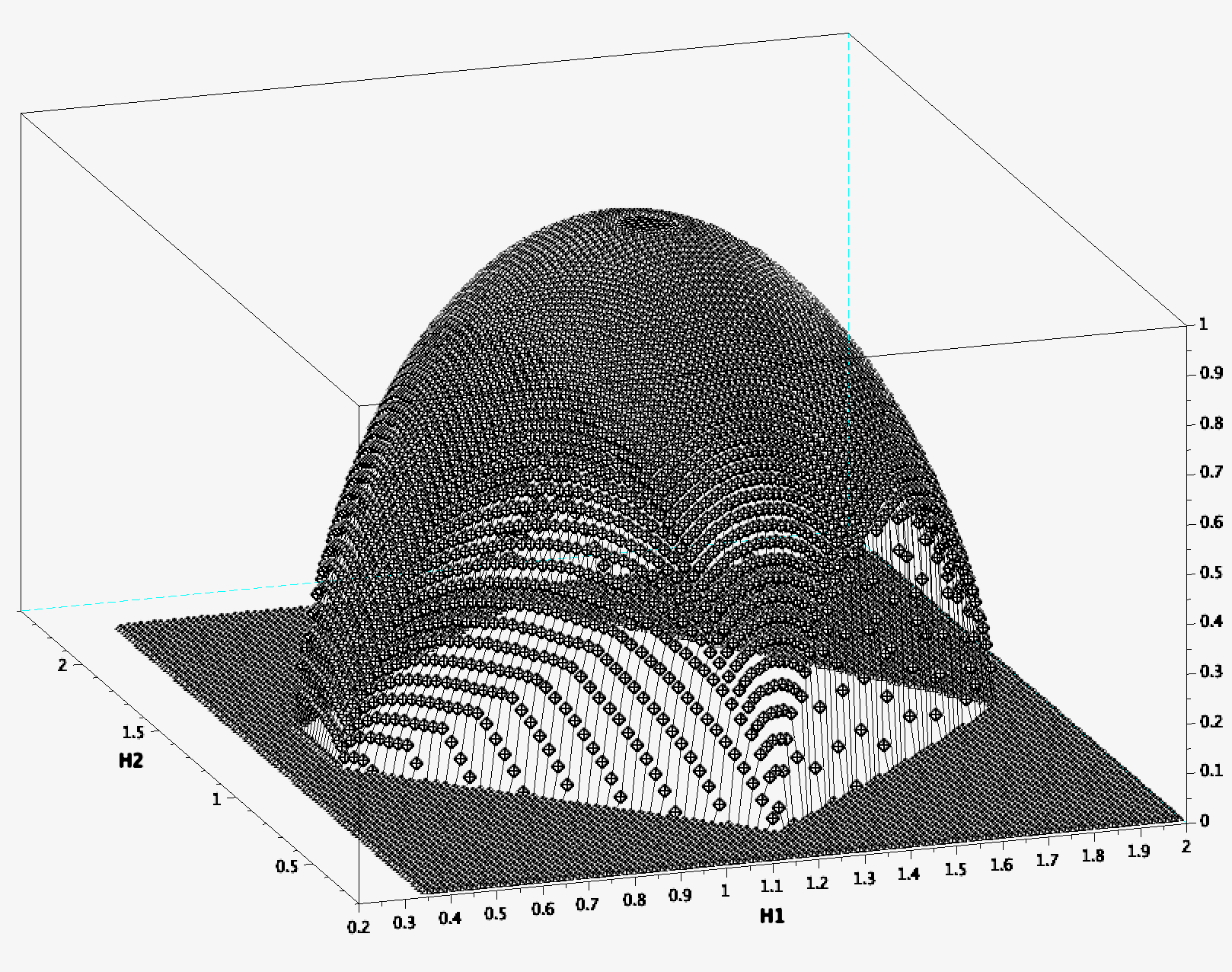}   \\ 
 \includegraphics[width=7.8cm,height=6cm]{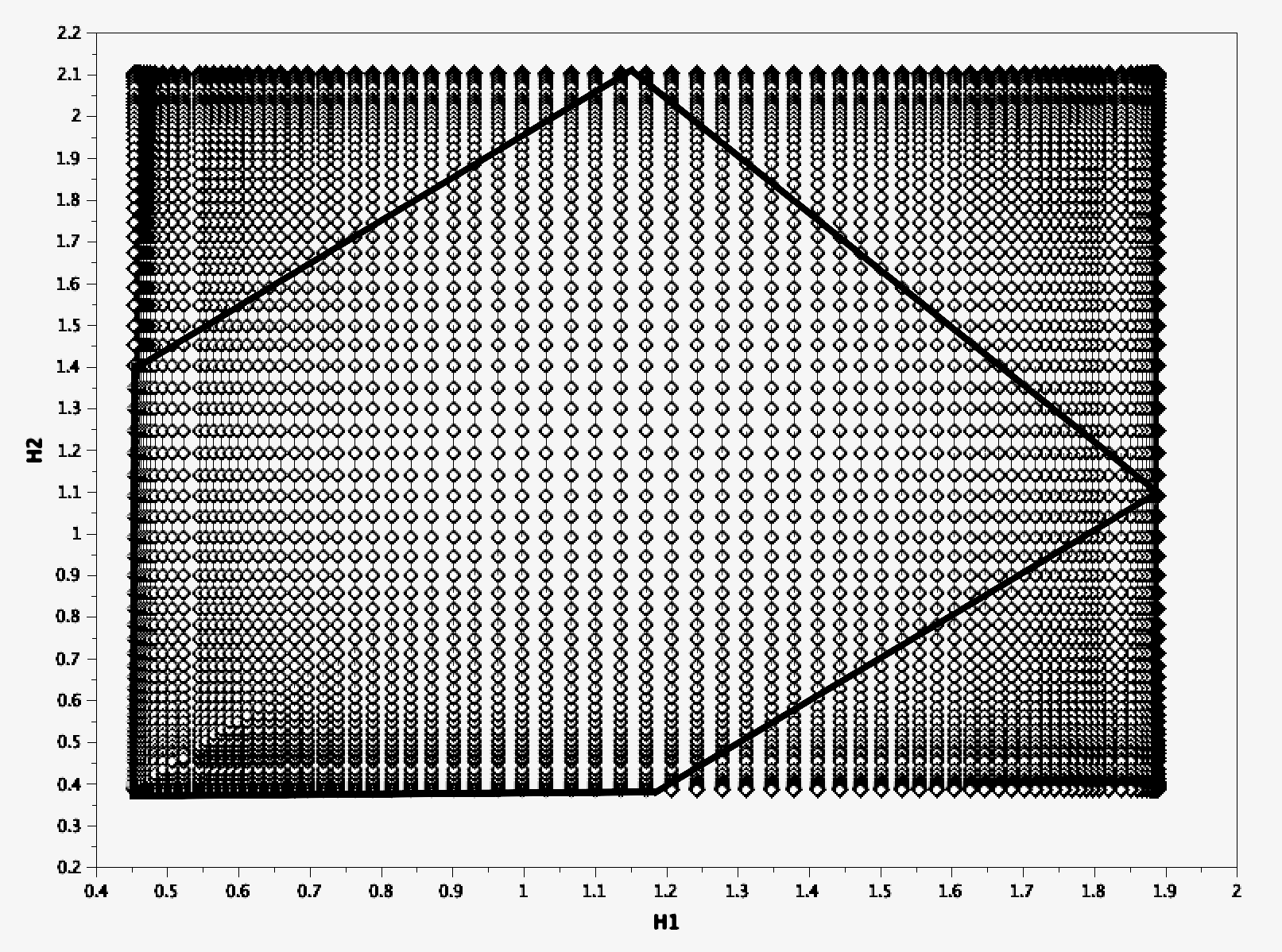} \   \includegraphics[width=7.8cm,height=6cm]{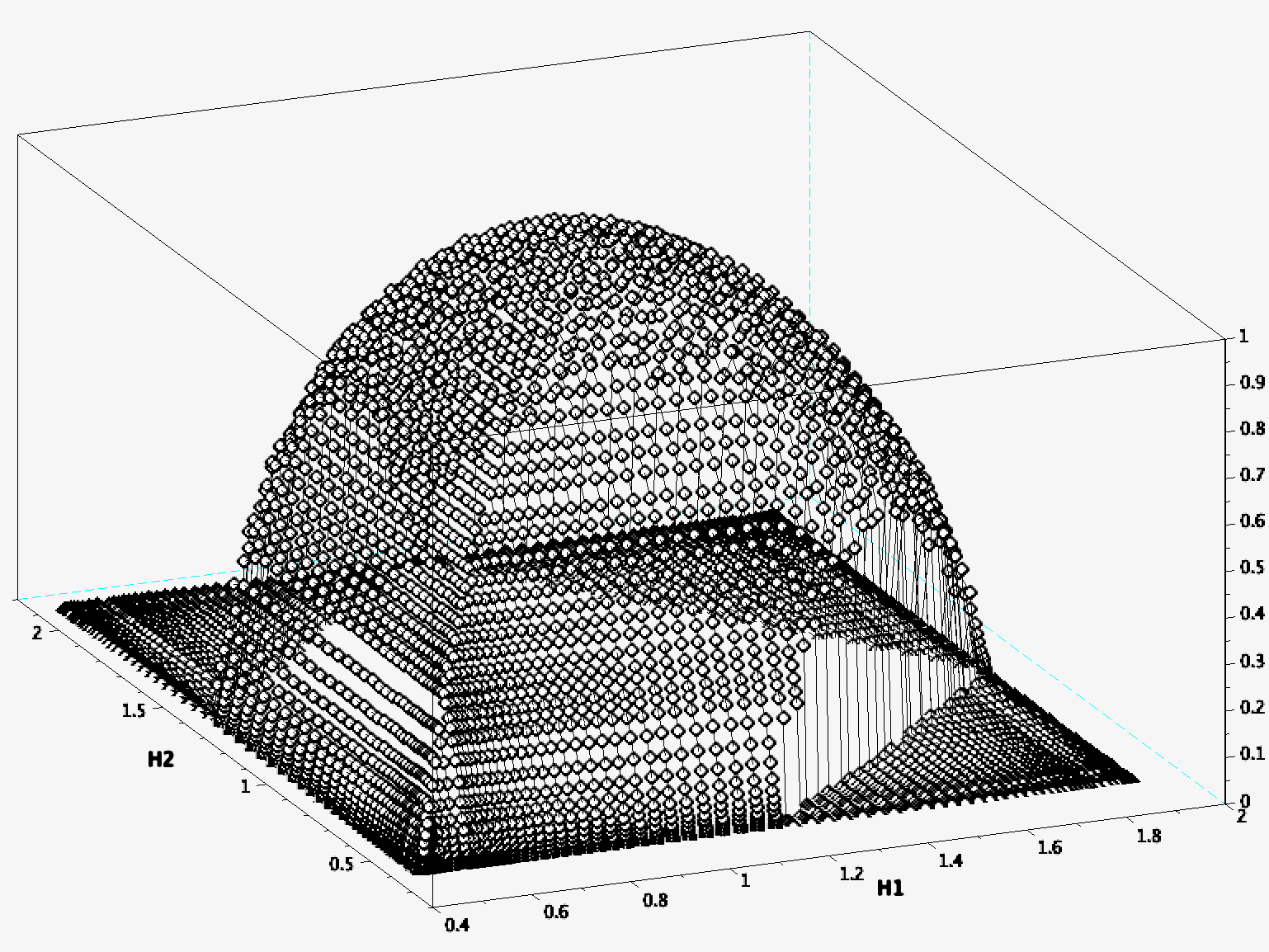}   \\ 
 \caption{Case $0<p_1=0.27<1/2<p_2=0.8$ and $\eta=0.5$. {\bf Top left:} Support of the Legendre spectrum $\tau_{\muu,\nue}^*$  {\bf Top right:} Legendre spectrum $\tau_{\muu,\nue}^*$. {\bf Bottom left:} Support of the multifractal  spectrum $D_{\muu,\nue}$  {\bf Bottom right:} Bivariate multifractal spectrum $D_{\muu,\nue}$. The supports are   distinct (but not disjoint) pentagons. }
\label{fig-taueta1*}
 \end{figure}
 \end{center}

The article is organized as follows.
 In Section \ref{sec-supports}, we set up the notations, and prove Theorem \ref{th-supports}. Next, in Section \ref{sec-bernoulli}, we recall useful facts on Bernoulli measures, and prove additional ones.  Section \ref{sec_preliminary} contains preliminary results useful for the rest of the article. Theorem \ref{mainth1} is proved in Section \ref{sec-case1}. Finally, Theorem \ref{mainth2}, which is much more delicate, is obtained in Section \ref{sec-case2}. A subtle joint analysis of the possible simultaneous behaviors of $\muu(I)$ and $\nue(I)$, for all dyadic intervals $I$,  is necessary. 
 
 \medskip
 
 Let us conclude by underlining the richness of such analyses, and the necessity to enrich the classes of objects for which the multivariate \ml analysis be performed.

\section{Some notations and proof of Theorem \ref{th-supports} }
\label{sec-supports}

\subsection{Notations} 

We adopt the  convention that  for   $a,b\in \R$, $[a,b]$ stands for the interval $[\min(a,b),\max(a,b)]$, and that $0^q=0$, for every $q\in \R$.

For every $j\geq 1$, let $\Sigma_j=(\{0,1\})^j$ be the set of words of length $j$ on the alphabet $\{0,1\}$, and   $ \Sigma=(\{0,1\})^\N$ and  $\Sigma^*=\bigcup_{j\ge 0} \Sigma_j$ be respectively the sets of infinite and finite words. The length of $w\in \Sigma^*$ is denoted by $|w|$, and $|w|=\infty $ when $w\in \Sigma$. When $w\in \Sigma^*$, we usually write $w=w_1\cdots w_{|w|}$ and $w\in \Sigma_{|w|}$, and $w=w_1\cdots w_n\cdots$ when $w\in \Sigma$. Then if $j\leq |w|$, $w_{|j}$ is the prefix of $w$ of length $j$. When $w\in \Sigma^*$, the cylinder $\lfloor w\rfloor$ is the set of words $w'\in \Sigma\cup\Sigma^*$ whose prefix of length $|w|$ equals $w$.

The concatenation of two words $w\in\Sigma^*$ and $w' \in \Sigma^* \cup\Sigma$ is denoted by $ww'$. 

%

We define  for $w\in \Sigma^*\cup \Sigma$
\begin{equation}
\label{defxw}
x_w=  \sum_{\ell=1}^{|w|} w_\ell 2^{-\ell} \in \zu
\end{equation}
and every real number  $x\in \zu$ can be written $x= x_w$ for some $w\in  \Sigma^*\cup \Sigma$, the decomposition being unique except for dyadic numbers $x_w$ for $w\in \Sigma^*$.

The dyadic intervals are encoded using words   $w\in \Sigma_j$ by 
$$
I_w= \Big[\sum_{\ell =1}^{j}w_\ell 2^{-\ell}, 2^{-j}+\sum_{\ell=1}^{j}w_\ell 2^{-\ell}\Big ],
$$
and we will talk indifferently about $I_w$ or   the cylinder $\lfloor w\rfloor$, and   write $I\in \Sigma_j$ or $I\in \mathcal{D}_j$ depending on the context.

For every interval $I\in \mathcal{D}_j$, we write 
$$I^+=I+2^{-j} \ \ \mbox{ and } \ \ I^-=I-2^{-j}.$$

For every $x\in \zu$, call $I_j(x)$ the unique dyadic interval  of $\Sigma_j$ containing $x$, $I_j ^+(x)=I_j(x)+2^{-j}$, $I_j ^-(x)=I_j(x)-2^{-j}$, and $\wI_j(x) =   I_j(x)\cup I_j^+(x) \cup I_j^-(x) $.

 For an interval $I=[a,b]$ and $\ep>0$, we use the notation
 $$I\pm\ep =I+[-\ep,+\ep] = [a-\ep,b+\ep].$$
 
 Finally, the dimension of  a Borel probability measure $\mu$ is 
 $$\dim(\mu) =\inf \{\dim_H(A): \mu(A)=1 \}.$$

\subsection{Proof of Theorem \ref{th-supports}}

We are going to  build two Borel probability measures $\nu_1$ and $\nu_2$   such that  
 $\support(D_{\nu_1,\nu_2})=\{ (1/2,1/2),(1/2,+\infty),(+\infty,1/2),(+\infty,+\infty) \}$, while  $\support(\tau^*_{\nu_1,\nu_2})$ is the triangle with edges $(1,1/2),(1/2,1),(1,1)$.
In particular, the two supports do not intersect.

For this, we will alternate between two simple construction schemes: 
\begin{itemize}
\item[(P1)]
Starting from a dyadic interval $I \in \Sigma_{2j}$ with  positive $\nu_i$-mass, the $\nu_i$-mass of every $J\in \Sigma_{2(j+1)}$ with $J\subset I$ is put to $\nu(I)/4$.

\item[(P2)]
Starting from a dyadic interval $I \in \Sigma_{2j}$ with  positive $\nu_i$-mass, the $\nu_i$-mass of the first and the last of the four subintervals  $J\in \Sigma_{2(j+1)}$ with $J\subset I$ is put to $\nu(I)/2$, and the mass of the two others is put to 0.
\end{itemize}

The first one mimics the Lebesgue measure, while the second one is an intermediary step leading to a Cantor set.

Let $(\ep_n:=2^{-(n+2)})_{n\geq 1}$, and  $(j^i_n)_{n\geq 1}$, for $i=1,...,4$, be four sequences of integers   tending  fast to infinity, whose values will be   given inductively along the construction. 

Start with $\nu_1(\zu)=\nu_2(\zu)=1$.
Let us assume that at generation $2j^4_n$, 
 for $i\in \{1,2\}$, for every $I\in \Sigma_{2j_4^n} $,  when $\nu_i(I)\neq 0$, {\bf $\nu_i(I)$ depends only on $i$} and 
 \begin{equation}
 \label{encadr-nu}
 2^{-2j_4^n(1+\ep_n)}= |I|^{1+\ep_n} \leq \nu_i(I) \leq |I|^{1-\ep_n}=2^{-2j_4^n(1-\ep_n)} .
 \end{equation}
In particular, the measures $\nu_i$ are "doubling at scale $2^{-2j_4^n}$", i.e. for every $I\in \Sigma_{2j_4^n}$ with $\nu(I)>0$, $\nu(I)\leq \nu(3I)\leq 3\nu(I)$. This implies that the two sums $  \sum_{I\in \Sigma_{2j^4_n} } \nu_1(I)^{q_1}  \nu_2(I)^{q_2} $ and $  \sum_{I\in \Sigma_{2j^4_n} } \nu_1(3I)^{q_1}  \nu_2(3I)^{q_2} $ are equivalent up to universal constants, and we shall   compute $  \tau_{ \nu_1, \nu_2} $ in this section by replacing $3I$ in formula \eqref {deftaumu3} by $I$.

Assume also that 
\begin{equation}
 \label{encadr-cardinal-0}
2^{ 2j^4_n(1-2\ep_n)}\leq  \#\{I\in \Sigma_{2j^4_n}: \nu_1(I)\nu_2(I)\neq 0\}  \leq 2^{2j^4_n(1+2\ep_n)}.
 \end{equation}

For $q\in \R$, let us write $\mbox{sgn}(q)=+1$ when $q\geq 0$ and $\mbox{sgn}(q)=-1$  when $q< 0$. 
The previous inequalities  yield 
\begin{align*}
  \sum_{I\in \Sigma_{2j^4_n} } \nu_1(I)^{q_1}  \nu_2(I)^{q_2}    & \geq    2^{2j^4_n(1-2\ep_n)}  2^{-2j_4^n(1 +\mbox{\tiny sgn}(q_1)  \ep_n)q_1} 2^{-2j_4^n(1 +\mbox{\tiny sgn}(q_2)  \ep_n)q_2}  \\
    \sum_{I\in \Sigma_{2j^4_n} } \nu_1(I)^{q_1}  \nu_2(I)^{q_2}    & \leq    2^{2j^4_n(1+2\ep_n)}  2^{-2j_4^n(1 -\mbox{\tiny sgn}(q_1)  \ep_n)q_1} 2^{-2j_4^n(1 -\mbox{\tiny sgn}(q_2)  \ep_n)q_2}  .
   \end{align*}
Hence,
 \begin{align}
 \label{encad-tau-1}
    |\tau_{\nu_1,\nu_2,2j^4_n}(q_1,q_2) -(q_1+q_2-1)| \leq  ( |q_1|+|q_2|+2) \ep_n.
    \end{align}

These conditions are satisfied at generation $j^1_0=j^2_0=j^3_0=j^4_0=0$. Next, we proceed iteratively  using 4 consecutive steps:

 \sk
 
{\bf Step 1:} 
As long as $j^4_{n} \leq j\leq j^1_{n+1}-1$, the two measures are constructed simultaneously on $\Sigma_{2j} $ by iterating scheme (P1) for $\nu_1$ and   (P2) for $\nu_2$. More precisely,  for every $I\in \Sigma_{2j} $ such that $\nu_1(I)>0$, apply (P1) to define $\nu_1$ on the intervals of $\Sigma_{2(j+1)}$, and similarly for every $I\in \Sigma_{2j} $ such that $\nu_2(I)>0$, apply (P2) to define $\nu_2$ on the intervals of $\Sigma_{2(j+1)}$.


Let us choose $j_{n+1}^1$ as the smallest integer such that $\frac {5j^4_n+1}{j^1_{n+1}}\leq \ep_{n+1}/2$.

At the last step, at generation $2j_1^{n+1}$:
\begin{itemize}
 \item
 for every $I\in \Sigma_{2j_{n+1}^1} $,  either $\nu_1(I)=0$ or  $\nu_1(I)=\nu_1(J)2^{-2(j_{n+1}^1-j^4_n)}$.
 Our choice for $j^1_{n+1}$ and \eqref{encadr-nu} give that 
 \begin{align}
 \label{encadr-nu1-1}
2^{-2j_{n+1}^1(1+\ep_{n+1})} &\leq 2^{-2j_4^n(1+\ep_n)} 2^{-2(j_{n+1}^1-j^4_n) } \leq  \nu_1(I)\\
\nonumber
&\leq  2^{-2j_4^n(1-\ep_n)} 2^{-2(j_{n+1}^1-j^4_n)}\leq 2^{-2j_{n+1}^1(1-\ep_{n+1})}.
\end{align}
 
Also, for every $j\in \{2j^4_n,...,2j^1_{n+1}\}$ and  $I\in \Sigma_{j}$, 
\begin{align}
 \label{encadr-nu1-11}
2^{-j(1+\ep_{n})} \leq \nu_1(I) \leq 2^{-j(1-\ep_{n})}.
\end{align}

\item
 for every $I\in\Sigma_{2j_{n+1}^1}$, either $\nu_2(I)=0$ or $\nu_2(I)=\nu_2(J) 2^{-(j_{n+1}^1-j^4_n)}$. Then
 \begin{align}
 \label{encadr-nu2-1}
2^{-j_{n+1}^1(1+\ep_{n+1})} &\leq 2^{-2j_4^n(1+\ep_n)} 2^{-(j_{n+1}^1-j^4_n) } \leq  \nu_2(I)\\
\nonumber
&\leq  2^{-2j_4^n(1-\ep_n)} 2^{-(j_{n+1}^1-j^4_n)}\leq 2^{-j_{n+1}^1(1-\ep_{n+1})}.
\end{align}

Also, for every $j\in \{2j^4_n,...,2j^1_{n+1}\}$ and  $I\in \Sigma_{j}$, 
\begin{align}
 \label{encadr-nu2-11}
2^{-j(1+\ep_{n})} \leq  \nu_2(I) \leq 2^{-j/2(1-\ep_{n})}.
\end{align}
\end{itemize}
Intuitively, at an interval $I$ of generation  $2j_{n+1}^1$ such that $\nu_1(I)\nu_2(I)\neq 0$,  one has $\nu_1(I)\sim |I|$ while $\nu_2(I)\sim |I|^{1/2}$.

\sk

Observe that by the scheme (P2) and by \eqref{encadr-cardinal-0}, 
\begin{align}
 \label{encadr-cardinal-1}
2^{ j^1_{n+1}(1-\ep_{n+1})} \leq 2^{ j^1_{n+1}-j^4_n} 2^{2j^4_n(1-2\ep_n) }& \leq  \#\{I\in \Sigma_{2j^1_{n+1}}: \nu_1(I)\nu_2(I)\neq 0\} \\
\nonumber
& \leq 2^{ j^1_{n+1}-j^4_n} 2^{2j^4_n(1+2\ep_n)}\leq 2^{ j^1_{n+1}(1+\ep_{n+1})}.
 \end{align}
 So,  even if there are intervals $I$ such that $\nu_1(I)\neq 0$ and $\nu_2(I)=0$ (or the opposite situation), they are heuristically not the most numerous ones.
 
 \sk

Similar computations as in the preliminary step give 
\begin{align*}
  \sum_{I\in \Sigma_{2j^1_{n+1} } } \nu_1(I)^{q_1}  \nu_2(I)^{q_2}    & \geq    2^{j^1_{n+1}  (1-\ep_{n+1})}  2^{-2j^1_{n+1} (1 +\mbox{\tiny sgn}(q_1)  \ep_{n+1})q_1} 2^{- j_1^{n+1}(1 +\mbox{\tiny sgn}(q_2)  \ep_{n+1})q_2}  \\
    \sum_{I\in \Sigma_{2j^1_{n+1}} } \nu_1(I)^{q_1}  \nu_2(I)^{q_2}    & \leq     2^{j^1_{n+1}  (1+\ep_{n+1})}  2^{-2j^1_{n+1} (1 -\mbox{\tiny sgn}(q_1)  \ep_{n+1})q_1} 2^{- j_1^{n+1}(1 -\mbox{\tiny sgn}(q_2)  \ep_{n+1})q_2}   .
   \end{align*}
Hence,
 \begin{align}
 \label{encad-tau-2}
    |\tau_{\nu_1,\nu_2,2j^1_{n+1}}(q_1,q_2) -(q_1+q_2/2-1/2 )| \leq  ( |q_1|+|q_2|+2) \ep_{n+1}.
    \end{align}
 
Also, for every $j\in \{2j^4_n,...,2j^1_{n+1}\}$,  
$$   \tau_{\nu_1,\nu_2,j } (q_1,q_2)  \in [q_1+q_2/2-1/2,  q_1+q_2-1] \pm( |q_1|+|q_2|+2) \ep_{n} ,$$
recalling that  $[a,b]$ stands for the interval $[\min(a,b),\max(a,b)]$. 

\mk

{\bf  Step 2:} Next we iterate the same  scheme (P1) for $\nu_1$ and  $\nu_2$ until generation $2 j^2_{n+1}$ as follows: for every  $j \in \{j^1_{n+1},...,j^2_{n+1}-1\} $, for every $I\in \Sigma_{2j} $ such that $\nu_i(I)>0$ ($i\in \{1,2\}$), apply (P1) to define $\nu_i$ on the intervals of $\Sigma_{2(j+1)}$. Again, at every generation $j\leq 2j^2_{n+1}$, the value of $\nu_i(I)$ (when $\nu_i(I)>0$) for $I\in \Sigma_{j}$ depends on $j$  only.

%

Set  $j_{n+1}^2$ as the  smallest integer such that $\frac {5j^1_{n+1}}{j^2 _{n+1}}\leq \ep_{n+1}/2$.

At generation $2j^2_{n+1}$, one has

\begin{itemize}
 \item
 for every $I\in \Sigma_{2j_{n+1}^2} $,  either $\nu_1(I)=0$ or  $\nu_1(I)=\nu_1(J)2^{-2(j_{n+1}^2-j_{n+1}^1)}$. Then 
 \eqref{encadr-nu1-1} yields   
 \begin{align}
 \label{encadr2-nu1-2}
2^{-2j_{n+1}^2(1+\ep_{n+1})} &\leq 2^{-2j_{n+1}^1(1+\ep_{n+1})} 2^{-2(j_{n+1}^2-j_{n+1}^1) } \leq  \nu_1(I)\\
\nonumber
&\leq  2^{-2j_{n+1}^1(1-\ep_{n+1})} 2^{-2(j_{n+1}^2-j_{n+1}^1)}\leq 2^{-2j_{n+1}^2(1-\ep_{n+1})}.
\end{align}
 
Also, for every $j\in \{2j^1_{n+1},...,2j_{n+1}^2\}$ and  $I\in \Sigma_{j}$, 
\begin{align}
 \label{encadr2-nu1-21}
2^{-j(1+\ep_{n})} \leq \nu_1(I) \leq 2^{-j(1-\ep_{n})}.
\end{align}

\item
 for every $I\in\Sigma_{2j_{n+1}^2}$, either $\nu_2(I)=0$ or $\nu_2(I)=\nu_2(J) 2^{-2(j_{n+1}^2-j_{n+1}^1)}$. Then
 \begin{align}
 \label{encadr2-nu2-2}
2^{-2j_{n+1}^2(1+\ep_{n+1})} &\leq 2^{-2j_{n+1}^1(1+\ep_{n+1})} 2^{-2(j_{n+1}^2-j_{n+1}^1) } \leq  \nu_2(I)\\
\nonumber
&\leq  2^{-2j_{n+1}^1(1-\ep_{n+1})} 2^{-2(j_{n+1}^2-j_{n+1}^1)}\leq 2^{-2j_{n+1}^2(1-\ep_{n+1})}.
\end{align}

Also, for every $j\in \{2j^1_{n+1},...,2j^2_{n+1}\}$ and  $I\in \Sigma_{j}$, 
\begin{align}
 \label{encadr2-nu2-21}
2^{-j(1+\ep_{n})} \leq  \nu_2(I) \leq 2^{-j/2(1-\ep_{n})}.
\end{align}
\end{itemize}
Intuitively, at an interval $I$ of generation  $j_{n+1}^2$, one has $\nu_i(I)\sim |I|$ for $i\in \{1,2\}$.

\sk

The same observations as in {\bf Step 1} give 
\begin{align}
 \label{encadr2-cardinal-2}
2^{2 j_{n+1}^2(1-\ep_{n+1})} =2^{2( j_{n+1}^2- j^1_{n+1})} 2^{j_{n+1}^1(1-2\ep_n) }& \leq  \#\{I\in \Sigma_{2j^2_{n+1}}: \nu_1(I)\nu_2(I)\neq 0\} \\
\nonumber
& \leq 2^{2(j_{n+1}^2- j^1_{n+1}) } 2^{2j_{n+1}^1(1+2\ep_n)}\leq 2^{2 j_{n+1}^2(1+\ep_{n+1})}.
 \end{align}
 and
 \begin{align*}
  \sum_{I\in \Sigma_{2j_{n+1}^2} } \nu_1(I)^{q_1}  \nu_2(I)^{q_2}    & \geq    2^{2j_{n+1}^2  (1-\ep_{n+1})}  2^{-2j_{n+1}^2 (1 +\mbox{\tiny sgn}(q_1)  \ep_{n+1})q_1} 2^{-j_{n+1}^2(1 +\mbox{\tiny sgn}(q_2)  \ep_n+1)q_2}  \\
    \sum_{I\in \Sigma_{2j^2_{n+1}} } \nu_1(I)^{q_1}  \nu_2(I)^{q_2}    & \leq     2^{2j_{n+1}^2  (1+\ep_{n+1})}  2^{-2j_{n+1}^2 (1 -\mbox{\tiny sgn}(q_1)  \ep_{n+1})q_1} 2^{- j_{n+1}^2(1 -\mbox{\tiny sgn}(q_2)  \ep_n+1)q_2}   .
   \end{align*}
Hence,
 \begin{align}
 \label{encad-tau-3}
    |\tau_{\nu_1,\nu_2,2j_{n+1}^2}(q_1,q_2) -(q_1+q_2-1 )| \leq  ( |q_1|+|q_2|+2) \ep_{n+1}.
    \end{align}

Also, for every $j\in \{2j^1_{n+1},..., 2j_{n+1}^2\}$ and  $I\in \Sigma_{j}$, 
$$   \tau_{\nu_1,\nu_2,j } (q_1,q_2)  \in [q_1+q_2/2-1/2,  q_1+q_2-1] \pm( |q_1|+|q_2|+2) \ep_{n} $$

\mk
 
{\bf  Step 3:}  Observe that we are back to the situation before applying {\bf Step 1}. Next, one proceeds as in Step 1, except that we apply (P2) to $\nu_1$ and (P1) to $\nu_2$. The computations are the same, except that the roles of $\nu_1$ and $\nu_2$ are switched. 

The same lines of computations give for $j^3_{n+1}$ large enough:
\begin{itemize}
 \item[(i)]
 for every $I\in \Sigma_{2j_{n+1}^3} $,  either $\nu_1(I)=0$ or 
  \begin{align}
 \label{encadr-nu1-3}
2^{-j_{n+1}^3(1+\ep_{n+1})} &\leq    \nu_1(I)  2^{-j_{n+1}^3(1-\ep_{n+1})}.
\end{align}
 
For every $j\in \{2j^2_{n+1},...,2j^3_{n+1}\}$, 
\begin{align}
 \label{encadr-nu1-31}
2^{-j/2(1+\ep_{n})} \leq \nu_1(I) \leq 2^{-j(1-\ep_{n})}.
\end{align}

\item[(ii)]
 for every $I\in\Sigma_{2j_{n+1}^3}$, either $\nu_2(I)=0$ or 
 \begin{align}
 \label{encadr-nu2-3}
2^{-j_{n+1}^3(1+\ep_{n+1})} &\leq   \nu_2(I) \leq 2^{-j_{n+1}^3(1-\ep_{n+1})}.
\end{align}
For every $j\in \{2j^2_{n+1},...,2j^3_{n+1}\}$, 
\begin{align}
 \label{encadr-nu2-31}
2^{-j(1+\ep_{n})} \leq  \nu_2(I) \leq 2^{-j/2(1-\ep_{n})}.
\end{align}
\item[(iii)] One has
\begin{align}
 \label{encadr-cardinal-2}
2^{ j^3_{n+1}(1-\ep_{n+1})}   \leq  \#\{I\in \Sigma_{2j^3_{n+1}}: \nu_1(I)\nu_2(I)\neq 0\}  \leq 2^{ j^3_{n+1}(1+\ep_{n+1})}.
 \end{align}
 \item One has
  \begin{align}
 \label{encad-tau-4}
    |\tau_{\nu_1,\nu_2,2j^3_{n+1}}(q_1,q_2) -(q_1/2+q_2-1/2 )| \leq  ( |q_1|+|q_2|+2) \ep_{n+1}.
    \end{align}
\item
For every $j\in \{2j^2_{n+1},...,2j^3_{n+1}\}$,  
$$   \tau_{\nu_1,\nu_2,j } (q_1,q_2)  \in [q_1/2+q_2-1/2,  q_1+q_2-1] \pm( |q_1|+|q_2|+2) \ep_{n} $$
 
\end{itemize}

\mk

{\bf  Step 4:}  One applies (P1) to the two measures, and for a suitable choice of $j^4_{n+1}$, the estimates \eqref{encadr2-nu1-2},  \eqref{encadr2-nu1-21}, \eqref{encadr2-nu2-2}, \eqref{encadr2-nu2-21}, \eqref{encadr2-cardinal-2} and \eqref{encad-tau-3}   hold at a generation $j^4_{n+1}$ chosen sufficiently large with respect to $j^3_{n+1}$. 

\mk


Observe that after {\bf Step 4}, we are back to the initial conditions (before applying {\bf Step 1}). Hence the  process can be iteratively applied and the two measures $\nu_1$ and $\nu_2$ are simultaneously constructed. Also, the two measures are by construction   doubling on their support, i.e. for every $I\in \Sigma_{j}$ with $\nu(I)>0$, $\nu(3I)\leq 3\nu(I)$.

\mk

One now collects the information:

\begin{center}
\begin{figure}
 \includegraphics[width=7.8cm,height=7cm]{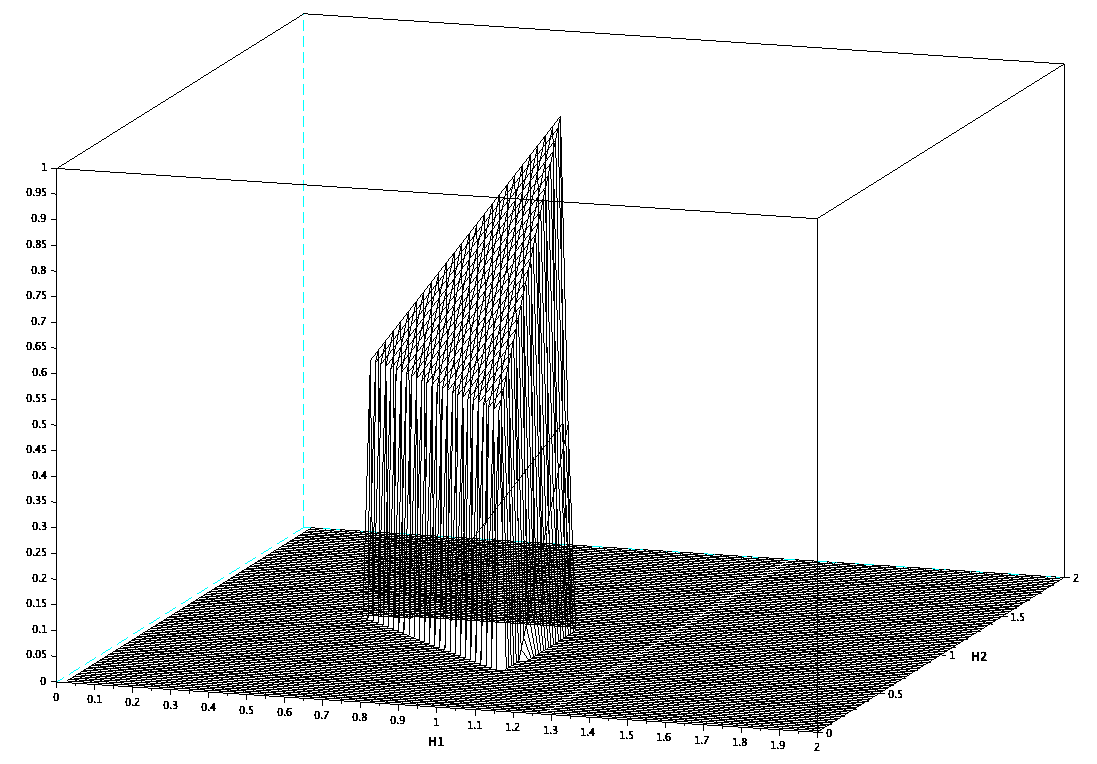}   
  \caption{Legendre transform of $\tau_{\nu_1,\nu_2}$.}
\label{fig-except}
 \end{figure}
 \end{center}

\begin{itemize}

\item
Let $i\in \{1,2\}$, $\ep>0$, and $x\in \supp(\nu_i)$. By construction, \eqref{encadr-nu2-1} and \eqref{encadr-nu2-3}, there exist  infinitely many generations $j =2 j^1_n$ ot $j= 2j^3_n$ such that $\ep_n\leq \ep$,   $ 2^{-j/2(1+\ep)}  \leq \nu_i(B(x,2^{-j}))  = \nu_i(I_j(x)) \leq 2^{-j/2(1-\ep)}$.

Also, for every $j$ large, $ 2^{-j/2(1+\ep)}\leq \nu_i(B(x,2^{-j}))  \leq 2^{-j(1-\ep)}$ by \eqref{encadr-nu1-11}, \eqref{encadr-nu2-11}, \eqref{encadr2-nu1-21}, \eqref{encadr2-nu2-21} and the same bounds hold at  {\bf Step 3} and {\bf 4}.
Hence $\underline{\dim}(\nu_i,x)=1/2$.

\sk
\item
As a consequence of the first item, the bivariate multifractal spectrum of $(\nu_1,\nu_2)$ is:
\begin{align*}
D_{\nu_1,\nu_2}(H_1,H_2) =\begin{cases}
  \dim(\supp(\nu_1)\cap  \supp(\nu_2) )& \mbox{ when }H_1=H_2=1/2\\
 \dim(\supp(\nu_1)\cap  \supp(\nu_2)^c) & \mbox{ when }H_1=1/2, H_2=+\infty\\
 \dim(\supp(\nu_1)^c\cap  \supp(\nu_2)) & \mbox{ when }H_1=+\infty, H_2=1/2\\
  \dim(\supp(\nu_1)^c\cap  \supp(\nu_2)^c )& \mbox{ when }H_1=H_2=+\infty.
 \end{cases}
\end{align*}
 Even if it not necessary for our purpose, one can check that $ \dim(\supp(\nu_1)\cap  \supp(\nu_2) )=1/2$, $ \dim(\supp(\nu_1)^c\cap  \supp(\nu_2)^c )=1$, $ \dim(\supp(\nu_i)\cap  \supp(\nu_j)^c) =1/2$ for $i\neq j$.

\sk
\item
 By \eqref{encad-tau-2}, \eqref{encad-tau-3}, \eqref{encad-tau-4} and \eqref{encad-tau-1} (which is also obtained after {\bf Step 4}) and the inequalities that follow these equations, and recalling that $\ep_n\to 0$ when $n$ tends to infinity, one deduces that
 \begin{align*}\label{tau-munu} 
 \tau_{\nu_1,\nu_2}(q_1,q_2)&=\min(q_1+q_2-1,q_1/2+q_2-1/2, q_1+q_2/2-1/2)\\
 &= \begin{cases}
\ q_1+q_2-1& \mbox{ when } q_1\leq 1 \mbox{ and } q_2\leq 1\\
\ q_1/2+q_2-1/2& \mbox{ when } q_1> 1 \mbox{ and }  q_2\leq q_1\\
\ q_1+q_2/2-1/2& \mbox{ when } q_2> 1 \mbox{ and }  q_1 <q_2.
 \end{cases}
 \end{align*}
 Hence,  $ \tau_{\nu_1,\nu_2}$ is made of three affine parts.  A quick computation shows that $ \tau_{\nu_1,\nu_2}^*(H_1,H_2) =( H_1+H_2-1)\cdot {\bf 1\!\!\!1}_{\mathcal{T}}(H_1,H_2)$, where $\mathcal{T}$ is the triangle with edges $(1/2,1)$, $(1,1/2)$, $(1,1)$, which does not intersect the support of $D_{\nu_1,\nu_2}$.
\end{itemize}

\sk

The pair $({\nu_1,\nu_2})$ satisfies the conclusions of Theorem \ref{th-supports}.

\section{Standard results on Bernoulli measures}
\label{sec-bernoulli}

\subsection{Large deviations and multifractal properties of Bernoulli measures}

Let  $\mu  $ be a probability measure on $\zu$.  

For $H\in\R^+$, in addition to $\underline E_\mu(H)$ defined in \eqref{eq-defdmu},  other level   sets associated with local dimensions are needed:
\begin{equation*}
 \overline E_\mu(H)=\left \{x\in[0,1] : {\overline \dim_\locloc}(\mu,x)=H\right \} \ \mbox{ and }   E_\mu(H)= \underline E_\mu(H)\cap   \overline E_\mu(H).
 \end{equation*}

Observe that, recalling the notations of the beginning of Section \ref{sec-supports},  for every $x\in \zu $, 
\begin{align*} 
{\underline \dim_\locloc}(\mu,x)& =\liminf_{j \to +\infty}\frac{\log \mu(\wI_j(x) ) }{ -j}=\liminf_{j \to +\infty}\frac{\log  \max (\wI_j(x) ) }{ -j}\\ 
{\overline \dim_\locloc}(\mu,x)& =\limsup_{j \to +\infty}\frac{\log \mu(\wI_j(x) ) }{ -j}=\limsup_{j \to +\infty}\frac{\log  \max (\wI_j(x) ) }{ -j}.
\end{align*}

\begin{definition}\label{defnewsets}
For any  Borel measure $\mu$ supported by $\zu$, and $H\geq 0$, define  
\begin{eqnarray*}
E^{\leq}_\mu(H) =\{x \in [0,1]: {\dim_\locloc}(\mu,x) \leq H\}  \ \text{ and }  \ \  E^{\geq}_\mu(H) =\{x \in [0,1]: {\dim_\locloc}(\mu,x) \geq H \}.
\end{eqnarray*}
The sets $\underline{E}^{\leq}_\mu(H)$, $\underline{E}^{\geq}_\mu(H) $, $\overline{E}^{\leq}_\mu(H)$, $\overline{E}^{\geq}_\mu(H) $ are defined similarly using the lower and upper local dimensions, respectively.
%

\end{definition}
%
%

\begin{definition} 
\label{defsmu}
Let $\mu$ be a probability measure on $\zu$. 
For every set $I\subset \R^+$ and     $j\geq 1$, set
$$\mathcal{E}_\mu(j,I) =
  \left\{ w\in \Sigma_j: \frac{\log_2 \mu(I_w)}{ {-j}} \in
I \right\}.$$
 \end{definition}

Standard results on binomial measures are the following, see  \cite{BRMICHPEY,Olsen,BBP}.  
\begin{proposition}
\label{fm}   
Let $p\in (0,1)$ and recall that 
$${H_{p,\min}}  \, \leq   \, H_{p,e}:=-p\log p-(1-p)\log_2(1-p)\leq   \, H_{p,s} := (  H_{p,\min}+H_{p,\min})/2  \, \leq    \, H_{p,\max} .$$

\begin{enumerate}
\smallskip\item   $\dim\mup =  H_{p,e} $ and $\mup(E_{\mu_p} (H_{p,e}))=1$.

\smallskip\item  For every $H\ge 0$ and $F\in\{E,\underline E,\overline E\}$,  one has 
$$  \dim F_{\mu_p} (H)=\tau_{\mu_p} ^*(H)=D_{\mu_p} (H),$$ 
with  $F_{\mu_p} (H) = \emptyset$ if and only if $D_{\mu_p} (H)=-\infty$. 

\smallskip \item  For every $H  \in [H_{p,\min},H_{p,s}]$ (i.e., in the increasing part of the  spectrum $D_\mup$),  one has  
$$\dim\,
E^{\leq}_\mup(H) =\dim\, \underline{E}^{\leq}_\mup(H) =\dim\,
\overline{E}^{\leq}_\mup(H) = D_{\mu_p} (H).$$

\smallskip \item  For every $H  \in [H_{p,s}, H_{p,\max}]$ (i.e. in the decreasing part of   $D_\mup$),  one has  
$$\dim\,
E^{\geq}_\mup(H)=\dim\, \underline{E}^{\geq}_\mup(H) =\dim\,
\overline{E}^{\geq}_\mup(H) =  D_\mup(H).$$
  
\mk \item
 For every $ H_{\min} \leq \alpha\leq\beta\leq H_{\max}$, there exists a probability measure $\mu_{\alpha,\beta}$ supported by $\zu$ such that $\mu_{\alpha,\beta}(\underline{E}_\mup(\alpha)\cap\overline{E}_\mup(\beta))=1$ and for $\mu_{\alpha,\beta}$-almost every $x$, 
 \begin{equation}
\label{eq-mualphabeta}
\lim_{r\to 0+} \frac{\log \mu_{\alpha,\beta}(B(x,r)) }{\log r}=\lim_{r\to 0+} \frac{\log \mu_{\alpha,\beta}(\widetilde I_j(x)) }{\log 2^{-j}}= \min(D_{\mu_p}(\alpha),D_{\mu_p}(\beta)),
\end{equation}
 where $ \widetilde I\in \{I,I^+,I^-\}$.
 
 \sk\item
For every $\ep>0$ and every interval {$I \subset \R_+$ such that $I\cap [H_{p,\min},H_{p,\max}]\neq \emptyset$, there exists an integer $J_{I}$ such that for every $j\geq J_{I}$,
$$\left| \frac{ \log_2  \mathcal{E}_\mup(j,I) }{ j } - 
 \sup_{H\in I} 
 D_\mup(H) \right| \leq  \ep  .  $$}

%

\end{enumerate}
\end{proposition}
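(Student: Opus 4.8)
The common engine for all six items is the symbolic coding $w\mapsto x_w$, under which a cylinder $\lfloor w\rfloor$ with $w\in\Sigma_j$ corresponds to the dyadic interval $I_w\in\mathcal{D}_j$ of length $2^{-j}$, so that Hausdorff dimension can be read off at dyadic scales. Writing $N_0(w)$ and $N_1(w)=|w|-N_0(w)$ for the numbers of letters $0$ and $1$ in $w$, the multiplicative definition of $\mup$ gives the exact identity
\[
\mup(I_w)=p^{N_0(w)}(1-p)^{N_1(w)},\qquad \frac{\log_2\mup(I_w)}{-|w|}=\alpha\Big(\tfrac{N_0(w)}{|w|}\Big),\quad \alpha(f):=-f\log_2 p-(1-f)\log_2(1-p).
\]
Thus the dyadic scaling exponent of $\mup$ is a monotone function (increasing if $p<1/2$, decreasing if $p>1/2$) of the digit frequency $f=N_0/j$, and the whole analysis reduces to counting words by their frequency. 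The two elementary inputs I would isolate first are Stirling's formula, in the uniform form $\#\{w\in\Sigma_j:N_0(w)=\lfloor fj\rfloor\}=2^{j(h(f)+o(1))}$ with $h(f)=-f\log_2 f-(1-f)\log_2(1-f)$ and $o(1)$ uniform in $f\in[0,1]$, and the Legendre computation $\tau_{\mup}^*(\alpha(f))=h(f)=D_\mup(\alpha(f))$, which pins the ordinate of the spectrum at abscissa $\alpha(f)$ to the entropy $h(f)$. The geometric passage from balls to intervals is already packaged by the remark preceding Definition~\ref{defnewsets}: $\dimu(\mup,x)$ and $\dimo(\mup,x)$ are computed from $\mup(\widetilde I_j(x))$, i.e.\ from the three adjacent dyadic intervals $I_j(x),I_j^\pm(x)$, whose frequencies differ by $O(1/j)$ and hence whose exponents differ by $o(1)$.

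With these in hand I would dispatch items (6) and (1) immediately. Item (6) is purely combinatorial: $w\in\mathcal{E}_\mup(j,I)$ iff $N_0(w)/j\in\alpha^{-1}(I)$, so summing the binomial counts and applying the uniform Stirling estimate gives $\log_2\#\mathcal{E}_\mup(j,I)=j(\sup_{H\in I}D_\mup(H)+o(1))$ with $o(1)$ uniform for $j\ge J_I$. Item (1) is the strong law of large numbers: $\mup$-almost every $x$ has i.i.d.\ digits with $\mathrm{frequency}(0)\to p$, whence $\dim(\mup,x)=\alpha(p)=H_{p,e}$, and since $\mup$ charges a set of that dimension, $\dim\mup=H_{p,e}$ by Billingsley's lemma.

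Item (2) is the classical formalism. The upper bounds $\dim\underline E_\mup(H),\dim\overline E_\mup(H)\le\tau_\mup^*(H)$ follow from item (6): every point of either set has its exponent in $[H-\ep,H+\ep]$ for infinitely many $j$, so both sets sit inside a limsup set covered at scale $j$ by $2^{j(\sup_{[H-\ep,H+\ep]}D_\mup+o(1))}$ intervals of length $2^{-j}$; a Borel--Cantelli content estimate and $\ep\to0$ (using continuity of $D_\mup$) give the bound. The matching lower bound $\dim E_\mup(H)\ge\tau_\mup^*(H)$ comes from the auxiliary Bernoulli measure $m_{f_H}$ with $\mathrm{frequency}(0)=f_H$ where $\alpha(f_H)=H$: its typical points satisfy $\dim(\mup,\cdot)=H$ and its own local dimension is the constant $h(f_H)=\tau_\mup^*(H)$, so the mass distribution principle applies. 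As $E_\mup(H)\subseteq\underline E_\mup(H),\overline E_\mup(H)$, the three dimensions are squeezed to $\tau_\mup^*(H)=D_\mup(H)$, with emptiness precisely when $\alpha^{-1}(H)=\emptyset$, i.e.\ $D_\mup(H)=-\infty$. Items (3) and (4) are the one-sided refinements and use the monotonicity of $D_\mup$ on each branch: on the increasing branch $\underline E^{\le}_\mup(H)$ is covered infinitely often by intervals of exponent $\le H+\ep$, of number $2^{j(\sup_{H'\le H+\ep}D_\mup+o(1))}=2^{j(D_\mup(H+\ep)+o(1))}$ by monotonicity, giving $\dim\le D_\mup(H)$ after $\ep\to0$; the inclusions $E^{\le}\subseteq\overline E^{\le}\subseteq\underline E^{\le}$ then equalize the three. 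Item (4) is symmetric on the decreasing branch with exponents $\ge H-\ep$.

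The real work, and the step I expect to be the main obstacle, is item (5): building $\mu_{\alpha,\beta}$ so that its $\mup$-exponents oscillate between $\alpha$ and $\beta$ while its own local dimension \emph{converges} to $\min(D_\mup(\alpha),D_\mup(\beta))$. The plan is an inhomogeneous Moran construction: split the generations into fast-growing consecutive blocks labelled alternately $\alpha$ and $\beta$; on an $\alpha$-block keep only cylinders of digit frequency $f_\alpha$ (with $\alpha(f_\alpha)=\alpha$), on a $\beta$-block only cylinders of frequency $f_\beta$ (with $\alpha(f_\beta)=\beta$), and in \emph{each} block spread the mass uniformly over a subfamily of exactly $2^{\min(D_\mup(\alpha),D_\mup(\beta))\,|B_k|}$ admissible cylinders --- possible because this target entropy rate is $\le h(f_\alpha)=D_\mup(\alpha)$ and $\le h(f_\beta)=D_\mup(\beta)$. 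Choosing the block lengths to grow fast enough that the current block dominates the empirical averages, the law of large numbers inside each block yields, for $\mu_{\alpha,\beta}$-a.e.\ $x$, that the $\mup$-exponent $\alpha(N_0(x_{|j})/j)$ has $\liminf=\alpha$ and $\limsup=\beta$ (so $x\in\underline E_\mup(\alpha)\cap\overline E_\mup(\beta)$), while the $\mu_{\alpha,\beta}$-exponent converges to the constant $\min(D_\mup(\alpha),D_\mup(\beta))$, which is exactly \eqref{eq-mualphabeta}. The delicate points I would watch are the simultaneous bookkeeping of these two exponents (one oscillating, one convergent), the uniformity of the Stirling estimates across blocks of unequal length, and the transfer from cylinder masses to $\mu_{\alpha,\beta}(\widetilde I_j(x))$ and to balls $B(x,r)$ via the adjacent-interval remark.
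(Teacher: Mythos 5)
The paper does not prove this proposition: it is stated as a collection of standard facts on binomial measures with references to the literature, so there is no in-paper argument to compare against. Your outline is the classical route to all six items and is essentially correct: the reduction to digit frequencies via $\mup(I_w)=p^{N_0(w)}(1-p)^{N_1(w)}$, the uniform Stirling count, the identity $\tau_{\mup}^*(\alpha(f))=h(f)$, the limsup-covering upper bounds combined with the monotonicity of $D_\mup$ on each branch for items (3)--(4), and the auxiliary Bernoulli measure plus Billingsley for the lower bounds are exactly the standard ingredients, and the passage from $I_j(x)$ to $\widetilde I_j(x)$ costs only $O(1/j)$ in the frequency, as you note.

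The one place where your sketch, taken literally, would fail is item (5). Selecting \emph{an arbitrary} subfamily of $2^{\delta|B_k|}$ admissible cylinders in a block and distributing mass uniformly over it does not guarantee \eqref{eq-mualphabeta}: if the selected cylinders cluster (e.g.\ share a long common prefix), then at intermediate generations $j$ inside the block the mass $\mu_{\alpha,\beta}(I_j(x))$ stays of order $2^{-\delta P_{k-1}}$ while $j$ is already much larger than $P_{k-1}$, so the own exponent dips well below $\delta$ and the \emph{limit} in \eqref{eq-mualphabeta} is destroyed (only a liminf/limsup statement would survive). You flag this as a delicate point, but the fix should be stated: the thinning must be scale-homogeneous within each block --- for instance, on the block with the larger entropy, freeze a deterministic, evenly spread set of positions at prescribed digits of the correct frequency and let the remaining fraction $\delta/D_\mup(\beta)$ of positions carry i.i.d.\ digits of frequency $f_\beta$, so that every prefix of length $\ell$ has mass $2^{-\delta\ell+o(\ell)}$. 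With that amendment the construction delivers both the oscillation of the $\mup$-exponent between $\alpha$ and $\beta$ and the convergence of the $\mu_{\alpha,\beta}$-exponent to $\min(D_\mup(\alpha),D_\mup(\beta))$, which is what the rest of the paper (notably Proposition \ref{prop_majdimE} and Proposition \ref{prop-expo-simultane}) actually uses.
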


Finally, recall the properties of the Legendre transform: 
\begin{itemize} 
\sk\item  If $H=\tau_\mup'(q)$, then $ \tau_\mup(q) = D_\mup ^* (q) =  q H  - D_\mup(H) = q \tau_\mup'(q) - D_\mup(\tau_\mup'(q)) $.
\sk\item
If  $q= D_\mup'(H)$, then  $ D_\mup(H) = \tau_\mup ^* (H)  =   H q - \tau_\mup(q) = H D'_\mup(H) - \tau_\mup(D'_\mup(H)) $.
\sk\item
One has  ${H_{p,\min}}=\tau_{\mu_p}'(+\infty) $, $ H_p = \tau_{\mu_p}'(0) $, and $ {H_{p,\max}}= \tau_{\mu_p}'(-\infty)$.

\end{itemize}
These relationships will be used repeatedly in the following.

Denote $H\pm \ep=[H-\ep,H+\ep]$ and recall that  $[a,b]\pm\ep=[a-\ep,b+\ep]$.

Item (4) is often used under  the following form.   For every $[a,b]\subset [H_{\min},   H_{\max}]$ and $ \wep>0$, there exists $\ep>0$  and  a
generation  $J_{\ep,\wep}$ such that $j \geq J_{\ep,\wep}$ implies
\begin{equation}
\label{upperbound-1}
\left| \frac{ \log_2 \#  \mathcal{E}_\mup(j,[a,b]\pm\ep)  }{j } - 
 \sup_{H\in [a,b]\pm\ep} 
D_\mup(H) \right| \leq \wep.
\end{equation}
 
\section{Preliminary results to Theorems \ref{mainth1} and \ref{mainth2}}
\label{sec_preliminary}
 
\newcommand\wip{\widetilde p}

The following observation are used repeatedly in the following. 

\begin{lemma}
\label{lem-trivial}
For  every $\alpha\in (H_{p_1,\min},H_{p_1,\max})$, call  $\wip_\alpha$   the unique real number such that $\tau_{\muu}'(\wip_\alpha) = \alpha$.

Then for every $j\geq 1$, for every $w\in \Sigma_j$, $(\muu(I_w) = 2^{-j\alpha}) \ \Leftrightarrow (\mud(I_w) = 2^{-j\G(\alpha)})\  \Leftrightarrow (\mu_{\wip_\alpha}(I_w) = 2^{-j D_{\muu}(\alpha)})$,
where 
\begin{eqnarray}
\label{defg}
  \G(H)  & =&  \coefg  (H+\log_2(1-p_1)) -\log_2(1-p_2).
  \end{eqnarray}
\end{lemma}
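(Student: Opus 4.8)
The heart of the matter is that for any binomial measure the mass of a dyadic interval depends only on how many $0$'s appear in the coding word. The plan is to make this explicit. Fix $w\in\Sigma_j$, let $k_0=k_0(w)$ denote the number of indices $\ell$ with $w_\ell=0$, and set $\theta=k_0/j$. Since the left subinterval always receives the factor $p$ and the right one the factor $1-p$, the construction of $\mu_p$ gives
\[
\log_2 \mu_p(I_w)=k_0\log_2 p+(j-k_0)\log_2(1-p)=j\big(\theta\log_2 p+(1-\theta)\log_2(1-p)\big).
\]
Hence $\mu_p(I_w)=2^{-j\alpha}$ holds exactly when $\theta$ equals the single value $\theta=(\alpha+\log_2(1-p))/(-\log_2 p+\log_2(1-p))$, the map $\theta\mapsto$ exponent being affine and strictly monotonic because $p\neq 1/2$. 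This reduces the whole lemma to a computation in the single variable $\theta$.

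For the first equivalence I would apply this with $p=p_1$ to translate $\muu(I_w)=2^{-j\alpha}$ into the explicit value $\theta_\alpha=(\alpha+\log_2(1-p_1))/(-\log_2 p_1+\log_2(1-p_1))$, and then feed the \emph{same} $\theta_\alpha$ (the word, hence $k_0$, is unchanged) into the formula for $p=p_2$. This yields $\mud(I_w)=2^{-j\beta}$ with $\beta=-\log_2(1-p_2)+\theta_\alpha\big(-\log_2 p_2+\log_2(1-p_2)\big)$. Substituting the value of $\theta_\alpha$ and recognizing the ratio defining $\coefg$ turns this into $\beta=\coefg(\alpha+\log_2(1-p_1))-\log_2(1-p_2)=\G(\alpha)$. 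Because the affine map $\theta\mapsto$ exponent is a bijection for each of $p_1,p_2$, and $\G$ is itself an affine bijection, each implication is reversible, so $(\muu(I_w)=2^{-j\alpha})\Leftrightarrow(\mud(I_w)=2^{-j\G(\alpha)})$.

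For the third equivalence I would identify $\wip_\alpha$ with the frequency $\theta_\alpha$, i.e.\ with the parameter of the binomial measure carried by the level set $\{x:\dimu(\muu,x)=\alpha\}$. Then $\mu_{\wip_\alpha}(I_w)=\wip_\alpha^{\,k_0}(1-\wip_\alpha)^{\,j-k_0}$, whose exponent is the binary entropy $-\theta_\alpha\log_2\theta_\alpha-(1-\theta_\alpha)\log_2(1-\theta_\alpha)$. The one step that is not pure bookkeeping is to recognize this entropy as $D_{\muu}(\alpha)$; this is precisely the multifractal formalism for the binomial measure together with the explicit description of its spectrum, both recorded in Proposition \ref{fm} (in particular $D_{\muu}(\alpha)=\tau_{\muu}^*(\alpha)$). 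Granting this, $\mu_{\wip_\alpha}(I_w)=2^{-jD_{\muu}(\alpha)}$ holds exactly when $\theta=\theta_\alpha$, that is exactly when $\muu(I_w)=2^{-j\alpha}$, which closes the chain of equivalences.

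I do not expect a genuine obstacle: the entire argument is a one-parameter affine computation governed by $k_0(w)$. The only points requiring care are, first, checking that all the affine maps involved are bijections so that the three conditions are mutually equivalent rather than one-directional, and second, correctly invoking the binomial multifractal formalism of Proposition \ref{fm} to turn the entropy of the frequency $\theta_\alpha$ into the value $D_{\muu}(\alpha)$, which is what pins down the role of $\wip_\alpha$.
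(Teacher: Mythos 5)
Your proof is correct and follows essentially the same route as the paper's: count the zeros of $w$, write $\log_2\mu_p(I_w)$ as $-j$ times an affine, injective (since $p\neq 1/2$) function of the frequency $N_0(w)/j$, and chase the resulting affine identifications, the first of which is exactly the identity $\G(-\gamma\log_2 p_1-(1-\gamma)\log_2(1-p_1))=-\gamma\log_2 p_2-(1-\gamma)\log_2(1-p_2)$ used in the paper. The only difference is cosmetic: where the paper closes the third equivalence via the Legendre identity $D_{\muu}(\alpha)=\alpha\wip_\alpha-\tau_{\muu}(\wip_\alpha)$, you read $\wip_\alpha$ as the frequency $\theta_\alpha=p_1^{q}/(p_1^{q}+(1-p_1)^{q})$ with $\tau_{\muu}'(q)=\alpha$ and invoke the entropy formula for the binomial spectrum --- these are equivalent by the Legendre-duality facts recorded after Proposition \ref{fm}, and your reading is the one under which $\mu_{\wip_\alpha}$ is a genuine binomial measure satisfying $H_{\wip_\alpha,e}=D_{\muu}(\alpha)$, i.e.\ what the lemma actually needs.
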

\begin{proof}
Write $w=(w_1,...,w_j)\in \Sigma_j$ and call $N_i(w)=\#\{1 \leq \ell \leq j:w_\ell=i\}$ for $i\in \{0,1\}$.
One has $\mu_{p_i}(I_w) = p_i^{N_0(w)}(1-p_i)^{N_1(w)} = 2^{-j [-\log_2(p_i)N_0(w)/j-\log_2(1-p_i)N_1(w)/j]}$. The first equivalence follows by  noting  that $N_0(w)+N_1(w) =j$  and for $\gamma\in \zu$, $\G( -\gamma \log_2(p_1)- (1-\gamma)\log_2(1-p_1) ) =  -\gamma\log_2(p_2)-(1-\gamma)\log_2(1-p_2)$.

The second equivalence follows similarly using  that $D_{\muu}(\alpha) = \alpha \wip_\alpha - \tau_{\muu}(\wip_\alpha)$.
\end{proof}

Using this lemma and Proposition \ref{fm} yields the next useful lemma.

\begin{lemma}
\label{lem-renouv}
Let $\ep>0$, $p_1, p_2\in (0,1)$. Fix $\alpha\in (H_{p,\min},H_{p,\max})$, and let $\wip_\alpha$ be such that $\tau_{\muu}'(\wip_\alpha)=\alpha$. An interval $I\in \Sigma_j$ satisfies the property $\mathcal{P}(p_1,p_2,\alpha,n,j)$ when for every $\widetilde I\in \{I,I^+,I^-\}$, 
\begin{equation}
\label{triplet}
 \begin{cases}  
\ \  2^{-j(\alpha +2^{-n-2})} \leq  \muu( \widetilde I )  & \leq  2^{-j(\alpha-2^{-n-2})}  \hspace{-4mm}\sk \\
\ \  2^{-j(\G(\alpha )+2^{-n-2})} \leq   \mud( \widetilde I ) & \leq2  ^{-j(\G(\alpha)-2^{-n-2})}  \hspace{-4mm} \\
\ \  2^{-j(D_{\muu}(\alpha)+2^{-n-2})}&  \leq  \mu_{\wip_\alpha}( \widetilde I ) \leq  2^{-j(D_{\muu}(\alpha)-2^{-n-2})}   \hspace{-4mm} \ \ .
 \end{cases}  
\end{equation}

There exists a sequence of integers $(J_{p_1,p_2,n,\alpha,\ep})_{n\geq 1}$   such that for every $n\geq 1$,
$$ \mu_{p_\alpha}\big( \mathcal{E}_{p_1,p_2,n,\alpha,\ep} \big)  \geq 1- \ep 2^{-n-2} ,$$
where   
 \begin{align}
 \label{eq-renouv}
\mathcal{E}_{p_1,p_2,n,\alpha,\ep}= \left \{  x\in \zu :     \ \forall \,j \geq J_{p_1,p_2,n,\alpha,\ep},  \   I_j(x) \mbox{ satisfies }  \mathcal{P}(p_1,p_2,\alpha,n,j) \right \} .
 \end{align}
 \end{lemma}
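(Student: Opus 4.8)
The plan is to transfer the almost-sure multifractal behaviour of the auxiliary binomial measure $\mu_{\wip_\alpha}$ to the three simultaneous estimates in \eqref{triplet}, exploiting that all three exponents involved are governed by a single digit frequency. First I would encode an interval $I_j(x)=I_w$ by its proportion of zeros $\gamma_j(x):=N_0(w)/j$. Exactly as in the proof of Lemma \ref{lem-trivial}, for any binomial parameter $p$ one has $-\tfrac1j\log_2\mu_p(I_w)=\gamma_j(x)(-\log_2 p)+(1-\gamma_j(x))(-\log_2(1-p))$, an affine function of $\gamma_j(x)$ with slope $\log_2\frac{1-p}{p}$. Applying this to $p\in\{p_1,p_2,\wip_\alpha\}$ and invoking Lemma \ref{lem-trivial} at the reference frequency $\gamma_\alpha$ (the unique $\gamma$ for which $\muu$ has exponent $\alpha$, so that simultaneously $\mud$ has exponent $\G(\alpha)$ and $\mu_{\wip_\alpha}$ has exponent $D_{\muu}(\alpha)$), I obtain a constant $M=M(p_1,p_2,\alpha)$ bounding all three slopes. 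Hence, setting $\delta_n:=2^{-n-2}/(M+1)$, the single frequency estimate $|\gamma_j(x)-\gamma_\alpha|\le\delta_n$ forces all three pairs of inequalities in \eqref{triplet} for $I=I_j(x)$, and likewise for the two neighbours $I_j^\pm(x)$ once their frequencies are controlled.

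The heart of the argument is to show that, under $\mu_{\wip_\alpha}$, the frequencies of $I_j(x)$ \emph{and} of $I_j^\pm(x)$ all converge to $\gamma_\alpha$. Under $\mu_{\wip_\alpha}$ the binary digits of $x$ are i.i.d., so the strong law of large numbers gives $\gamma_j(x)\to$ (digit-$0$ probability of $\mu_{\wip_\alpha}$) for $\mu_{\wip_\alpha}$-a.e. $x$; by Proposition \ref{fm}(1) this limit realizes the $\mu_{\wip_\alpha}$-exponent $\dim\mu_{\wip_\alpha}=D_{\muu}(\alpha)$, hence by Lemma \ref{lem-trivial} it equals $\gamma_\alpha$. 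For the neighbours I would note that the codes of $I_j^\pm(x)$ are the binary successor and predecessor of $w$, so their numbers of zeros differ from $N_0(w)$ by at most $R_j(x)+1$, where $R_j(x)$ is the length of the terminal run of equal digits of $w$. Since the terminal run has length $\ge m$ with probability at most $2\bigl(\max(\gamma_\alpha,1-\gamma_\alpha)\bigr)^{m}$, a Borel--Cantelli estimate yields $R_j(x)=O(\log j)$, hence $R_j(x)/j\to0$, for $\mu_{\wip_\alpha}$-a.e. $x$; the neighbour frequencies therefore also converge to $\gamma_\alpha$ almost surely, and the boundary intervals (where a neighbour fails to exist) occur only finitely often.

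Finally I would convert this almost-sure convergence into the quantitative bound. Writing $f_j(x):=\max_{\widetilde I\in\{I_j(x),\,I_j^\pm(x)\}}|N_0(\widetilde I)/j-\gamma_\alpha|$, the previous step gives $f_j\to0$ $\mu_{\wip_\alpha}$-a.e. Applying Egorov's theorem with the prescribed deficit $\eta=\ep\,2^{-n-2}$ produces a set $G_n$ with $\mu_{\wip_\alpha}(G_n)\ge1-\ep\,2^{-n-2}$ on which $f_j\to0$ uniformly; choosing $J_{p_1,p_2,n,\alpha,\ep}$ so that $f_j\le\delta_n$ on $G_n$ for every $j\ge J_{p_1,p_2,n,\alpha,\ep}$, the reduction of the first paragraph shows $G_n\subset\mathcal E_{p_1,p_2,n,\alpha,\ep}$, which is the desired inequality. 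Alternatively one may bypass Egorov and bound $\mu_{\wip_\alpha}(\exists\,j\ge J:\ f_j(x)>\delta_n)$ directly, by summing over $j\ge J$ a Hoeffding deviation bound $2e^{-j\delta_n^2/2}$ and the run-length bound, both geometric tails that drop below $\ep\,2^{-n-2}$ once $J$ is large.

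The only genuinely non-routine point I expect is the control of the two neighbours $I_j^\pm(x)$: the \emph{lower} bounds in \eqref{triplet} demand that each neighbour individually (not merely the union $I_j(x)\cup I_j^\pm(x)$) carry a comparable mass, which is precisely what the terminal-run-length estimate secures. Everything else is the standard law of large numbers and large-deviation control for the i.i.d. digits of $\mu_{\wip_\alpha}$, packaged through Lemma \ref{lem-trivial} and Proposition \ref{fm}.
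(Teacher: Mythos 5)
Your proposal is correct and follows essentially the same route as the paper, whose proof is a one-line reduction to Lemma \ref{lem-trivial}, the typicality of $\mu_{\wip_\alpha}$ (item (1) of Proposition \ref{fm}), and the identity $H_{\wip_\alpha,e}=D_{\muu}(\alpha)$; your digit-frequency reduction, strong law of large numbers, and Egorov/Hoeffding step are exactly the details that one-liner compresses. The one point you treat explicitly that the paper glosses over --- the individual lower bounds for the neighbours $I_j^{\pm}(x)$ via the $O(\log j)$ terminal-run-length estimate --- is a genuine and correctly handled refinement, not a deviation in method.
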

\begin{proof}
This directly  follows from   item (1) of Proposition \ref{fm}, Lemma \ref{lem-trivial} and the fact that $H_{\wip_\alpha,e}=D_{\muu}(\alpha)$.\end{proof}

\subsection{Cardinality of $\mathcal{A}$}

We first set some notations.

Let $\mathcal{B}\subset \N^*$ ($\mathcal{B}$ can be finite of infinite). Writing $\mathcal{B} = \{j_1,j_2,...\}$ with $j_1<j_2<...$, for every  $w\in\Sigma^*\cup \Sigma$, we set  
$$j^{\mathcal{B}} = \#\{1\leq \ell \leq j: \ell\in \mathcal{B}\} \ \ \mbox{ and } \ \ w^\mathcal{B}= (w_{j})_{j\in \mathcal{B}, j\leq |w|},$$
 the word that contains only the letters $w_j$ of $w$ with indices $j\in \mathcal{B}$. For instance, $w_{|j} = w^{\{1,2,\cdots,j\}}$.

For a dyadic interval $I\in \mathcal{D}_j$, the short notation $I^{\mathcal{B}}$   denotes the interval   $I^{\mathcal{B}}= I_{w^\mathcal{B}}$, where $w\in \Sigma_j$ is such that $I=I_w$.

 Then $I_j^{\mathcal{B}}(x)$,  $I_j^{+,\mathcal{B}}(x)$  and $I_j^{-,\mathcal{B}}(x)$ refer to $(I_j(x))^\mathcal{B}$, $(I_j^+(x))^\mathcal{B}$ and $(I_j(x)^-)^\mathcal{B}$, and  $I_j^{(n,\mathcal{B})}(x) = (I_j(x))^\mathcal{B} \cup (I_j^+(x))^\mathcal{B} \cup(I_j(x)^-)^\mathcal{B}$. There is a subtlety here, since $I_j^{(n,\mathcal{B})}(x) \neq   {(I_j^\mathcal{B}(x)) }^{(n)}$:     $  {(I_j^\mathcal{B}(x)) }^{(n)}$ is constituted by 3 consecutive intervals of generation $j^{\mathcal{B}}$, while $I_j^{(n,\mathcal{B}) }(x)$ can be made of 1, 2 or 3  such intervals since some of $I_j^{\mathcal{B}}(x)$,  $I_j^{+,\mathcal{B}}(x)$  and $I_j^{-,\mathcal{B}}(x)$ may coincide.

\medskip

We obtain a precise control of  the variations of the cardinality of  integers in $\mathcal{A}$ defined by \eqref{defA} (it is the set of random indices where $p_1$ is replaced by $p_2$ in the construction of $\nue$).

\begin{lemma}
\label{lem-maj-cardinal}
For every integers $j_1<j_2$, let 
\begin{equation}
\label{def-Amn}
 \mathcal{A}_{j_1,j_2}=\mathcal{A}\cap \{j_1,...,j_2\} .
 \end{equation}
 
With probability one, there exists  a positive sequence $(\ep_J)_{J\geq 0}$ tending to zero and an integer $J_0$ such that for every $J \geq J_0$, for every $j\geq  J^{\ep_J} $, 
\begin{equation}
\label{majA_j}
   | \#\mathcal{A}_{J,J+ j-1}   -  \eta j  | \leq  \sqrt{j \log (j)/\ep_J}  .
   \end{equation}
\end{lemma}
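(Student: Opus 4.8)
The plan is to recognize that $\#\mathcal{A}_{J,J+j-1}=\sum_{k=J}^{J+j-1}Y_k$ is a sum of exactly $j$ independent Bernoulli$(\eta)$ variables, with mean $\eta j$, and whose law depends only on the length $j$ (not on the starting index $J$) because the $Y_k$ are i.i.d. Concentration of such a sum is classical: by a Chernoff/Hoeffding bound one has, uniformly in $J$,
\[
\mathbb{P}\big(\,|\#\mathcal{A}_{J,J+j-1}-\eta j|> t\,\big)\le 2\exp\!\big(-2t^{2}/j\big)\qquad(t>0).
\]
The whole statement should then follow from a union bound in $j$ followed by the Borel--Cantelli lemma in $J$, the threshold $j\ge J^{\ep_J}$ being exactly what makes the relevant series converge.

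Concretely, I would fix any sequence $\ep_J\downarrow 0$ with $0<\ep_J\le 1$, and apply the inequality above with $t=t_{J,j}:=\sqrt{j\log(j)/\ep_J}$. Since $2t_{J,j}^{2}/j=2\log(j)/\ep_J$, this yields the per-scale bound $\mathbb{P}(|\#\mathcal{A}_{J,J+j-1}-\eta j|>t_{J,j})\le 2\,j^{-2/\ep_J}$. I then introduce the bad event
\[
B_J=\bigcup_{j\ge J^{\ep_J}}\big\{\,|\#\mathcal{A}_{J,J+j-1}-\eta j|> t_{J,j}\,\big\}.
\]
Writing $s=2/\ep_J\ge 2$ and $M=J^{\ep_J}\ge 1$, a comparison with an integral gives $\sum_{j\ge M}j^{-s}\le M^{-s}+\frac{M^{1-s}}{s-1}\le 2M^{1-s}$, and since $M^{1-s}=J^{\ep_J(1-2/\ep_J)}=J^{\ep_J-2}$, summing the per-scale bound over $j\ge J^{\ep_J}$ produces $\mathbb{P}(B_J)\le 4\,J^{\ep_J-2}$.

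It then remains only to sum in $J$: because $\ep_J\to 0$ one has $\ep_J-2\le -3/2$ for all large $J$, hence $\sum_J\mathbb{P}(B_J)\le 4\sum_J J^{\ep_J-2}<\infty$, and Borel--Cantelli gives that almost surely only finitely many $B_J$ occur. This furnishes an integer $J_0$ such that $B_J^{\,c}$ holds for every $J\ge J_0$, which is precisely \eqref{majA_j} for all $j\ge J^{\ep_J}$. The only delicate point is the double uniformity in $(J,j)$: the starting index $J$ is harmless by stationarity, so the deviation probability is $J$-independent, whereas the length $j$ is handled by summing the tail from the threshold $J^{\ep_J}$ \emph{before} invoking Borel--Cantelli. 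The exponent $\ep_J$ in $J^{\ep_J}$ is tuned exactly so that this tail decays like $J^{\ep_J-2}$, i.e. summably; this balance is the heart of the estimate, and it is also what forces the extra $\log j$ (an LIL-type rate) inside the square root rather than a bare $\sqrt{j/\ep_J}$, which would not decay in $j$.
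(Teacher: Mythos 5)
Your proof is correct and follows essentially the same route as the paper: Hoeffding's inequality with the threshold $t=\sqrt{j\log(j)/\ep_J}$ giving a per-scale bound $2j^{-2/\ep_J}$, a tail sum over $j\geq J^{\ep_J}$ of order $J^{\ep_J-2}$, and then Borel--Cantelli in $J$. The only (immaterial) difference is that the paper fixes the explicit choice $\ep_J=(\log J)^{-1/2}$, which it uses later to ensure $J^{\ep_J}\to+\infty$ while $J^{\ep_J}=o(J^\ep)$, whereas you leave the sequence arbitrary.
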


\begin{proof}

The random variables  $\{Y_{j}: j\in \{J,...,J+j-1\}\}$ are i.i.d. Bernoulli with parameter $\eta$. 
By Hoeffding's inequality, 
$$\mathbb{P}(| \mathcal{A}_{J, J+j-1} -  \eta  j | \geq  \sqrt{j\log (j)/\ep_J} ) \leq  2  \exp\left( -2  \sqrt{j\log (j)/\ep_J } ^2/j\right )  = \frac{2}{j^{2/\ep_J}} .$$ 
 So, assuming that $\ep_J\leq 1$, 
 $$\sum_{j\geq J^{ \ep_J}} \mathbb{P}(| \mathcal{A}_{J, J+j-1} - \eta j | \geq   \sqrt{j\log (j)/\ep_J} )  \leq 2  \sum_{j\geq J^{ \ep_J}}  \ \frac{2}{j^{2/\ep_J}}  \leq C (J^{ \ep_J }) ^{ 1- 2/\ep_J } = C  J^{ \ep_J  - 2  }  ,$$ 
the last estimate being true if $J^{ \ep_J}\to +\infty$ when $J\to +\infty$, for some absolute constant $C>0$.

Let us choose $\ep_J= (\log(J))^{-1/2}$. Then $J^{ \ep_J}\to +\infty$ but $J^{ \ep_J} = o(J^\ep)$ for every $\ep>0$.

One then has 
$$ \sum_{J\geq 1} \sum_{j\geq J^{ \ep_J}} \mathbb{P}(| \mathcal{A}_{J, J+j-1} - \eta j | \geq   \sqrt{j\log (j)/\ep_J} )  <+\infty,$$ 
and the Borel-Cantelli lemma yields the result. 
\end{proof}

Observe that \eqref{majA_j} immediately implies that  for every $J \geq J_0$ and every $j\geq  J^{\ep_J} $, 
$$
   | \#\mathcal{A}^c_{J,J+j-1}   -(1-\eta) j| \leq  \sqrt{j \log (j)/\ep_J}  .
$$

\subsection{A key decomposition, and dimensions of level sets}

 Recall that notations of the beginning of Section \ref{sec-supports}.
Writing $\mathcal{A}^c=\mathbb{N}\setminus \mathcal{A}$, it is quite clear from the previous definitions that for every $I\in \mathcal{D}$, 
\begin{equation}
\label{decomp-nu}
\nue(I) = \mu_{p_1}(\IA)\mu_{p_2}(\IAc),
\end{equation}
and obviously $\mu_{p_1}(I) = \mu_{p_1}(\IA)\mu_{p_1}(\IAc)$.
Subsequently, for each $x\in \zu$, in order to find $\dimu(\muu,x)$ and $\dimu(\nue,x)$,   the  behaviors of  $\mu_{p_1}(\IA_j(x))$, $\mu_{p_1}(\IAc_j(x))$,  and $\mu_{p_2}(\IAc_j(x)))$, as well as the same quantities for the neighbors intervals  $I_j(x)^+$ and  $I_j(x)^-$, must be simultaneously quantified.

The following proposition is key for determining the bivariate multifractal  spectrum of $(\mu_{p_1},\nue)$.
For every subset $\mathcal{B}$ of $\N$, $p\in (0,1)$ and $0\leq \alpha\leq\beta$, define
\begin{align*}
E^{p,\mathcal{B} }_{\alpha,\beta} & =  \Big\{x\in \zu:  \alpha \leq \liminf_{j\to+\infty}  \frac{\log \mu_{p}( I^{n,\mathcal{B}}_j(x))}{-j} \leq \limsup_{j\to+\infty}  \frac{\log \mu_{p}(  I^{n,\mathcal{B}}_j(x))}{-j} \leq \beta\Big\} .
\end{align*}

The computation of the Hausdorff dimension of such sets, and their intersection taking first $\mathcal{B}=\mathcal{A}$ and then $\mathcal{B}=\mathcal{A}^c$,  is very close to already existing results on the dimension of sets defined in Olsen  \cite{Olsen2,Olsen_2005,Attia-Barral}. However the previous results do not exactly cover our case, so we give a short proof of the following results.

\begin{proposition}
\label{prop_majdimE}
Let $\mathcal{B} \subset  \N$ be such that $\lim_{j\to +\infty} \frac{ \#\{\mathcal{B}\cap \{1,...,j\}\}}{j} =\weta \in [0,1]$. 

For $0\leq p_1,p_2\leq 1$,  $H_{1,\min} \leq \alpha_1 \leq \beta_1\leq H_{1,\max}$ and $H_{2,\min} \leq \alpha_2\leq \beta_2 \leq H_{2,\max}$,   one has
\begin{align}
\label{eq-maj3}
\dim E^{p_1,\mathcal{B}}_{\alpha_1,\beta_1} \cap E^{p_2,\mathcal{B}^c}_{\alpha_2,\beta_2}  & = \weta  \max_{ H\in [\alpha_1,\beta_1]}D_{\mu_{p_1}}(H)  + (1-\weta )  \max_{ H\in [\alpha_2,\beta_2]}D_{\mu_{p_2}}(H) .
\end{align}
\end{proposition}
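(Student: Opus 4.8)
The plan is to compute the Hausdorff dimension of the set $E^{p_1,\mathcal{B}}_{\alpha_1,\beta_1}\cap E^{p_2,\mathcal{B}^c}_{\alpha_2,\beta_2}$ by establishing matching upper and lower bounds, exploiting the fact that the constraint on $\mu_{p_1}$ involves only the letters $w_j$ with $j\in\mathcal{B}$, while the constraint on $\mu_{p_2}$ involves only the letters with $j\in\mathcal{B}^c$. These two families of coordinates are \emph{disjoint}, which is the structural reason the two contributions to the dimension should decouple and add up with the weights $\weta$ and $1-\weta$. First I would reduce, as suggested by the remark preceding the statement, the quantities $\mu_p(I^{n,\mathcal{B}}_j(x))$ to the dyadic masses $\mu_p(I^{\mathcal{B}}_j(x))$: since $I^{n,\mathcal{B}}_j(x)$ is a union of at most three consecutive intervals of generation $j^{\mathcal{B}}=\#(\mathcal{B}\cap\{1,\dots,j\})$, and the binomial measures are doubling on their support, the $\liminf$ and $\limsup$ of $\frac{\log\mu_p(I^{n,\mathcal{B}}_j(x))}{-j}$ coincide with those computed on $I^{\mathcal{B}}_j(x)$ alone, up to the scaling $j^{\mathcal{B}}/j\to\weta$. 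Concretely, the condition $x\in E^{p_1,\mathcal{B}}_{\alpha_1,\beta_1}$ becomes a constraint that the \emph{sub-word} $w^{\mathcal{B}}_{|j}$ has normalized $\mu_{p_1}$-mass exponent in $[\alpha_1,\beta_1]$ along a subsequence, and symmetrically for the $\mathcal{B}^c$-sub-word and $\mu_{p_2}$.

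For the \textbf{lower bound}, I would build a Cantor-like subset carrying a suitable measure. Pick $H_1^*\in[\alpha_1,\beta_1]$ and $H_2^*\in[\alpha_2,\beta_2]$ realizing the two maxima on the right-hand side of \eqref{eq-maj3}. Using item (5) of Proposition \ref{fm}, choose auxiliary Bernoulli measures $\mu_{\widetilde p_1}$ and $\mu_{\widetilde p_2}$ concentrated on points whose $\mu_{p_1}$- (resp. $\mu_{p_2}$-) local exponent equals $H_1^*$ (resp. $H_2^*$), so that $\dim\mu_{\widetilde p_i}=D_{\mu_{p_i}}(H_i^*)$. The key step is to define a product-type measure $m$ on $\zu$ that, on the coordinates indexed by $\mathcal{B}$, distributes mass according to $\mu_{\widetilde p_1}$ and, on the coordinates indexed by $\mathcal{B}^c$, according to $\mu_{\widetilde p_2}$. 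Because the two index sets are disjoint, $m$ is well defined, it is supported on $E^{p_1,\mathcal{B}}_{\alpha_1,\beta_1}\cap E^{p_2,\mathcal{B}^c}_{\alpha_2,\beta_2}$, and its local dimension at $m$-almost every point can be read off coordinate by coordinate: the $\mathcal{B}$-block of length $\sim\weta j$ contributes $\weta D_{\mu_{p_1}}(H_1^*)$ and the $\mathcal{B}^c$-block of length $\sim(1-\weta)j$ contributes $(1-\weta)D_{\mu_{p_2}}(H_2^*)$. A mass distribution principle argument then yields $\dim\geq \weta D_{\mu_{p_1}}(H_1^*)+(1-\weta)D_{\mu_{p_2}}(H_2^*)$, which is exactly the desired lower bound.

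For the \textbf{upper bound}, I would use an efficient covering built from the large-deviation/box-counting estimates in Proposition \ref{fm}(7), in the uniform form \eqref{upperbound-1}. At scale $2^{-j}$, any $x$ in the target set has a sub-word $w^{\mathcal{B}}_{|j}$ with $\mu_{p_1}$-exponent in a small neighborhood of $[\alpha_1,\beta_1]$ and a sub-word $w^{\mathcal{B}^c}_{|j}$ with $\mu_{p_2}$-exponent near $[\alpha_2,\beta_2]$. The number of admissible $\mathcal{B}$-sub-words of length $j^{\mathcal{B}}$ is at most $2^{j^{\mathcal{B}}(\sup_{H\in[\alpha_1,\beta_1]}D_{\mu_{p_1}}(H)+\epsilon)}$ and, independently, the number of admissible $\mathcal{B}^c$-sub-words of length $j-j^{\mathcal{B}}$ is at most $2^{(j-j^{\mathcal{B}})(\sup_{H\in[\alpha_2,\beta_2]}D_{\mu_{p_2}}(H)+\epsilon)}$; since a full word of length $j$ is determined by its two disjoint sub-words, the count of relevant dyadic intervals of generation $j$ is the product of these two factors. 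Using $j^{\mathcal{B}}/j\to\weta$ and letting $\epsilon\to0$ then bounds the Hausdorff dimension from above by the right-hand side of \eqref{eq-maj3}.

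The main obstacle I anticipate is not in either bound separately but in handling the interplay between the $\liminf$/$\limsup$ constraints and the passage from the full intervals $I^{n,\mathcal{B}}_j(x)$ to the sub-interval masses. One must be careful that the constraints are only required to hold \emph{asymptotically} (along subsequences for the extremal exponents), and that the reduction from a union of up to three translated intervals to a single sub-word does not lose or create exponents; the subtlety noted in the text — that $I^{n,\mathcal{B}}_j(x)$ may consist of one, two, or three generation-$j^{\mathcal{B}}$ intervals, unlike $(I^{\mathcal{B}}_j(x))^{(n)}$ — must be controlled via the doubling property so that the neighbor contributions are harmless. Making the two disjoint coordinate blocks genuinely independent in both the covering count and the local-dimension computation, uniformly in $j$, is where the care is needed, and this is exactly what the hypothesis $j^{\mathcal{B}}/j\to\weta$ is used for.
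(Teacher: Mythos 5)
Your proposal is correct and follows essentially the same route as the paper: the upper bound via the product counting of admissible $\mathcal{B}$- and $\mathcal{B}^c$-sub-words from \eqref{upperbound-1} together with a covering argument, and the lower bound via the product measure $\nu(I)=\nu_1(I^{\mathcal{B}})\nu_2(I^{\mathcal{B}^c})$ built from the auxiliary measures of Proposition \ref{fm}(5) and Billingsley's lemma (your mass distribution principle). The points you flag as delicate (reduction from $I^{n,\mathcal{B}}_j(x)$ to $I^{\mathcal{B}}_j(x)$, and the use of $j^{\mathcal{B}}/j\to\weta$) are exactly the ones the paper handles.
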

%

\begin{proof}

 Call $F$ the intersection set in the left-hand side of \eqref{eq-maj3}, and $s_F$ the real number on the right-hand side.

The upper bound follows from counting arguments. Let $i\in \{1,2\}$ and $\wep>0$. 
 
When $x\in   F$,  there exists $\ep>0$ and an integer $J_\ep$ such that for $j \geq J_\ep$, 
$ 2^{-j^\BBB (\beta_1+\ep/2)}\leq {\mu_{p_1}}( I^{n,\mathcal{B}}_j(x)) \leq 2^{-j^\BBB (\alpha_1 -\ep/2)}$  and $ 2^{-j^\BBBc (\beta_2+\ep/2)}\leq {\mu_{p_2}}( I^{n,\BBBc}_j(x)) \leq 2^{-j^\BBBc (\alpha_2 -\ep/2)}$.   In particular, $x$ is at distance at most $2^{-j+1}$ from an interval $I\in F_j$, where
\begin{align}
\label{def-fj}
 F_j:=\left\{I\in \mathcal{D}_j:  \frac{\log \mu_{p_1}(I^\BBB) }{\log j^\BBB} \in [\alpha_1,\beta_1]\pm\ep  \ \mbox{ and } \frac{\log \mu_{p_2}(I^\BBBc) }{\log j^\BBBc} \in [\alpha_2,\beta_2]\pm\ep  \right\},
 \end{align}

By \eqref{upperbound-1}, for some $\wep>0$ (that depends on $\ep$ and tends to 0 when $\ep\to0$), one can enlarge $J_\ep>0$ so that for every $j$ with $j^\BBB\geq J_\ep$ and $j^\BBBc\geq J_\ep$,
\begin{align}
\label{eqmajE1}
 \frac{ \log_2 \#  \mathcal{E}_{\mu_{p_1}}(j^\BBB,[\alpha_1,\beta_1]\pm\ep)  }{j^\BBB } & \leq  
 \max_{H\in [\alpha_1,\beta_1]\pm\ep} 
D_{\mu_{p_i}}(H) + \wep\\
\label{eqmajE2}
\frac{ \log_2 \#  \mathcal{E}_{\mu_{p_2}}(j^\BBBc,[\alpha_2,\beta_2]\pm\ep)  }{j^\BBBc } & \leq  
 \max_{H\in [\alpha_2,\beta_2]\pm\ep} 
D_{\mu_{p_2}}(H) + \wep.
\end{align}
 
%

Hence, for $j$ large enough, one has \begin{align*}
&  \#  F_j \leq  2^{j^\BBB( \max_{H\in [\alpha_1,\beta_1]\pm\ep} 
D_{\mu_{p_1}}(H) + \wep) }2^{j^\BBBc( \max_{H\in [\alpha_2,\beta_2]\pm\ep} 
D_{\mu_{p_2}}(H) + \wep) } := 2^{js_{F,j,\ep }}.
\end{align*}
Since $\lim_{j\to +\infty} \frac{ \#\{\mathcal{B}\cap \{1,...,j\}\}}{j} =\weta \in [0,1]$,   and using the continuity of $H
\mapsto D_{\mu_{p_i}}(H)$, one has $\lim_{\ep \to 0} \lim_{j\to+\infty } s_{F,j,\ep} = s_F$ (recall that $\wep$ depends on $\ep$ and tends to 0 with $\ep$).

Fix $\ep_1>0$ small, and choose $\ep$, $\wep$ and enlarge $J_\ep$ so that for every $j\geq J_\ep$,  $s_{F,j,\ep} < s_F+\ep_1$. 

From the observations above, one deduces that  for every $J\geq 1$, $
F  \subset  \bigcup_{j\geq J}  \bigcup_{I\in F_j} (I\pm 2^{-j}) 
$. Hence, fixing $s> s_{F}+2\ep_1$  and $\delta>0$, one gets for $J\geq J_\ep$ so large that $2^{-J}\leq \delta$ that 
$$
\mathcal{H}^s_\delta( F )  \leq    \sum_{j\geq J}  \sum_{I\in F_j } |I\pm 2^{-j}|^s   \leq  C \sum_{j\geq J} 2^{j(s_{F,j\ep}-s)} $$
which is the remainder of a convergent series by our choice for $s$. Hence, taking limits when $\eta\to 0$ gives $\mathcal{H}^s( F) <+\infty$ and $\dim F  \leq s$. This holds for every such $s$, and then for every $\ep_1>0$, hence the result.

\medskip

Next, we move to the lower bound in \eqref{eq-maj3}. Call $H_1 $ and $H_2$  exponents such that  $D_{\mu_{p_1}}(H_1)= \max_{ H\in [\alpha,\beta]}D_{\mu_{p_1}}(H)  $
 and $D_{\mu_{p_2}}(H_2)= \max_{ H\in [\alpha',\beta']}D_{\mu_{p_2}}(H)  $. Calling $  F ':=E^{p_1,\mathcal{B}}_{H_1,H_1} \cap E^{p_2,\mathcal{B}^c}_{H_2,H_2}$, since $  F' \subset F $, it is then enough to prove that  $\dim   F' =\weta  D_{\mu_{p_1}}(H_1) +(1-\weta) D_{\mu_{p_2}}(H_2)$.
 
 Call $\nu_i$ the measure given by item (5) of Proposition \ref{fm} associated with $\mu_{p_i}$ and $\alpha_i=\beta_i=H_i$. Then consider the measure $\nu$ defined for every $I\in \mathcal{D}_j$ by $\nu(I)=\nu_1(I^{\BBB})\nu_2(I^{\BBBc})$.
 From its definition it follows directly that for  $\nu$-almost every $x\in \zu$:
 \begin{itemize}
 \item by \eqref{eq-mualphabeta},   $\lim_{r\to 0} \frac{\log \mu_{p_1}(I^{\BBB,n}_j(x))}{\log 2^{-j^\BBB}} =  H_1$ and  $\lim_{r\to 0} \frac{\log \mu_{p_2}(I^{\BBBc,n}_j(x))}{\log 2^{-j^\BBBc}} =  H_2$. So $x\in  F'$.
 
\item $\lim_{r\to 0} \frac{\log \nu_{1}(I^{n,\BBB}_j(x))}{\log 2^{-j^\BBB}} =  D_{\mu_{p_1}}(H_1)$ and  $\lim_{r\to 0} \frac{\log \nu_{2}(I^{n,\BBBc}_j(x))}{\log 2^{-j^\BBBc}} =  D_{\mu_{p_2}}(H_2)$, from which one deduces that   $\lim_{r\to 0} \frac{\log \nu(B(x,r))}{\log r}  =\weta  D_{\mu_{p_1}}(H_1) +(1-\weta) D_{\mu_{p_2}}(H_2) $. In particular,    Billingsley's lemma yields $\dim (\nu ) \geq  \weta  D_{\mu_{p_1}}(H_1) +(1-\weta) D_{\mu_{p_2}}(H_2) =s_F$.
 \end{itemize}
 
 As a conclusion, $\dim   F'\geq \dim (\nu) \geq s_F$, hence the result.
\end{proof}

\section {The case where $p_1,p_2$ are on the same side of 1/2}
\label{sec-case1}
 
\subsection {Computation of the bivariate $L^q$-spectrum}

Let us start by a preliminary computation. For two Borel probability measures  $\mu,\nu$ on $\zu$, define 
 \begin{equation}
\label{deftaumu4}
\widetilde \tau_{\mu,\nu }(q_1,q_2) = \liminf_{j\to +\infty}\frac{-1}{j} \log_2 \sum_{I\in \mathcal{D}_j} .\mu(I)^{q_1}\nu(I)^{q_2},
\end{equation}
The difference between $\widetilde \tau_{\mu,\nu}$ and $ \tau_{\mu,\nu}$ defined in \eqref{deftaumu}  is that  $\widetilde \tau_{\mu,\nu}$ considers $ \mu(I)$ and  $ \nu(I)$ instead of $ \mu(3I)$ and $ \nu(3I)$. In general, it is less relevant than $ \tau_{\mu,\nu}$ since it is highly dependent on the choice of the dyadic basis (while $\tau_{\mu,\nu}$  is not because the intervals  $3I$ overlap a lot).

\begin{lemma}
\label{lem-estimtau1}
With probability one, for every choice of $p_1,p_2 \in (0,1)$, one has $\widetilde \tau_{\muu,\mud} =  \Tau $ and $\widetilde \tau_{\muu,\nue} = \Taue$. Also, the liminf in \eqref{deftaumu4} and \eqref{deftaumu3}
 are limits.
\end{lemma}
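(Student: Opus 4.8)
The plan is to compute $\widetilde\tau_{\muu,\mud}$ directly from the product structure of the two Bernoulli measures, and then deduce $\widetilde\tau_{\muu,\nue}$ by exploiting the decomposition \eqref{decomp-nu} together with the control on $\#\mathcal{A}_{1,j}$ provided by Lemma~\ref{lem-maj-cardinal}. First I would treat the deterministic pair $(\muu,\mud)$. Fix $q_1,q_2\in\R$ and $j\geq 1$. For $w\in\Sigma_j$ with $N_0(w)$ zeros and $N_1(w)=j-N_0(w)$ ones, the computation in the proof of Lemma~\ref{lem-trivial} gives $\muu(I_w)=p_1^{N_0(w)}(1-p_1)^{N_1(w)}$ and $\mud(I_w)=p_2^{N_0(w)}(1-p_2)^{N_1(w)}$, so that
\begin{equation*}
\muu(I_w)^{q_1}\mud(I_w)^{q_2}=\big(p_1^{q_1}p_2^{q_2}\big)^{N_0(w)}\big((1-p_1)^{q_1}(1-p_2)^{q_2}\big)^{N_1(w)}.
\end{equation*}
Summing over $w\in\Sigma_j$ and applying the binomial theorem yields exactly
\begin{equation*}
\sum_{I\in\mathcal{D}_j}\muu(I)^{q_1}\mud(I)^{q_2}=\big(p_1^{q_1}p_2^{q_2}+(1-p_1)^{q_1}(1-p_2)^{q_2}\big)^{j},
\end{equation*}
whence $\widetilde\tau_{\muu,\mud,j}(q_1,q_2)=-\log_2\big(p_1^{q_1}p_2^{q_2}+(1-p_1)^{q_1}(1-p_2)^{q_2}\big)=\Tau(q_1,q_2)$ for \emph{every} $j$. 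In particular the $\liminf$ is a genuine limit (it is constant in $j$), which settles the first assertion.

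For the pair $(\muu,\nue)$ I would use \eqref{decomp-nu}: writing $\mathcal{A}^c$ for the complementary index set, one has $\nue(I_w)=\muu(I_w^{\mathcal{A}})\,\mud(I_w^{\mathcal{A}^c})$, while $\muu(I_w)=\muu(I_w^{\mathcal{A}})\,\muu(I_w^{\mathcal{A}^c})$. Hence
\begin{equation*}
\muu(I_w)^{q_1}\nue(I_w)^{q_2}=\muu(I_w^{\mathcal{A}})^{q_1+q_2}\cdot\muu(I_w^{\mathcal{A}^c})^{q_1}\mud(I_w^{\mathcal{A}^c})^{q_2}.
\end{equation*}
The key structural point is that the letters indexed by $\mathcal{A}$ and those indexed by $\mathcal{A}^c$ are assigned independently along the construction, so the sum over $w\in\Sigma_j$ \emph{factorizes} into a product over the $j^{\mathcal{A}}$ positions in $\mathcal{A}\cap\{1,\dots,j\}$ and the $j^{\mathcal{A}^c}=j-j^{\mathcal{A}}$ positions in $\mathcal{A}^c\cap\{1,\dots,j\}$. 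Running the same binomial computation separately on each block gives
\begin{equation*}
\sum_{I\in\mathcal{D}_j}\muu(I)^{q_1}\nue(I)^{q_2}=\big(p_1^{q_1+q_2}+(1-p_1)^{q_1+q_2}\big)^{j^{\mathcal{A}}}\cdot\big(p_1^{q_1}p_2^{q_2}+(1-p_1)^{q_1}(1-p_2)^{q_2}\big)^{\,j-j^{\mathcal{A}}},
\end{equation*}
so that $\widetilde\tau_{\muu,\nue,j}(q_1,q_2)=\frac{j^{\mathcal{A}}}{j}\,\tau_{\muu}(q_1+q_2)+\frac{j-j^{\mathcal{A}}}{j}\,\Tau(q_1,q_2)$, recalling \eqref{taumup}.

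The final step is to pass to the limit in $j$, and this is where the probabilistic input enters. By Lemma~\ref{lem-maj-cardinal}, with probability one $j^{\mathcal{A}}=\#\mathcal{A}_{1,j}=\eta j+o(j)$ as $j\to+\infty$ (take $J=1$, or more carefully $J=J_0$, in \eqref{majA_j}; the error term $\sqrt{j\log j/\ep_j}$ is $o(j)$). Substituting the ratios $j^{\mathcal{A}}/j\to\eta$ and $(j-j^{\mathcal{A}})/j\to 1-\eta$ into the displayed expression, and using the continuity of $q\mapsto\tau_{\muu}(q)$ and of $\Tau$, gives
\begin{equation*}
\lim_{j\to+\infty}\widetilde\tau_{\muu,\nue,j}(q_1,q_2)=\eta\,\tau_{\muu}(q_1+q_2)+(1-\eta)\,\Tau(q_1,q_2)=\Taue(q_1,q_2),
\end{equation*}
so the $\liminf$ is in fact a limit and equals $\Taue$, as claimed. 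I expect no serious obstacle here: the main subtlety is purely bookkeeping, namely to justify the factorization over the two index blocks and to confirm that it is the \emph{almost sure} convergence $j^{\mathcal{A}}/j\to\eta$ (uniform in the fixed pair $(q_1,q_2)$) that upgrades the $\liminf$ to a limit. Since the conclusion is stated for every fixed $(q_1,q_2)$ and the exceptional null set from Lemma~\ref{lem-maj-cardinal} does not depend on $(q_1,q_2)$, the "with probability one, for every choice of $p_1,p_2$" phrasing follows immediately.
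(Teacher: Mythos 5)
Your proof is correct and follows essentially the same route as the paper: an exact binomial computation for $(\muu,\mud)$, the factorization of the sum over the $\mathcal{A}$ and $\mathcal{A}^c$ blocks via \eqref{decomp-nu} for $(\muu,\nue)$, and Lemma~\ref{lem-maj-cardinal} to get $j^{\mathcal{A}}/j\to\eta$ almost surely. Your bookkeeping of the weights is in fact the consistent one (the paper's displayed proof momentarily swaps $\eta$ and $1-\eta$ before landing on $\Taue$), so nothing needs to change.
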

\begin{proof}
The result for $\widetilde \tau_{\muu,\mud} $ is obtained in \cite{ACHA2018}, we reproduce it quickly. Call for $w\in \Sigma_j$ 
\begin{align*}
N_w^0=\# \{1\leq \ell\leq j: w_\ell=0 \} \ \mbox{ and } \ N_w^1=\#\{1\leq \ell\leq j: w_\ell=1\}=j- N_w^0.
\end{align*}
Then 
\begin{align*}
 \sum_{I\in \mathcal{D}_j} \muu(I)^{q_1}\mud(I)^{q_2} & =  \sum_{w\in \Sigma_j} \muu(I_w)^{q_1}\mud(I_w)^{q_2}\\
 & =    \sum_{w\in \Sigma_j}  (p_1)^{q_1  N_w^0}(1-p_1)^{q_1 N_w^1}  (p_2)^{q_2 N_w^0}(1-p_2)^{q_2 N_w^1} \\
 & = (p_1^{q_1}p_2^{q_2} + (1-p_1)^{q_1}(1-p_2)^{q_2})^j = 2^{-j\Tau(q_1,q_2)},
  \end{align*}
  hence the result.
Next, one sees that 
\begin{align*}
 \sum_{I\in \mathcal{D}_j} \muu(I)^{q_1}\nue(I)^{q_2} & =  \sum_{I\in \mathcal{D}_j} \muu(\IA)^{q_1+q_2}\muu(\IAc)^{q_1} \mud(\IAc)^{q_2} \\
 & =  \sum_{I\in \mathcal{D}_{j^{\mathcal{A}}}}\muu(I)^{q_1+q_2}  \sum_{I\in \mathcal{D}_{j^{\mathcal{A}^c}} } \muu(I)^{q_1} \mud(I)^{q_2} \\
 & = 2^{-\jA  \tau_{\muu}(q_1+q_2) -\jAc \Tau(q_1,q_2)}.
 \end{align*}
By Lemma \ref{lem-maj-cardinal}, noting that $\jA=\#\mathcal{A}_{1,j}$ and $\lim_{j\to+\infty} \frac{\jA}{j}=(1-\eta)$ with probability 1, one concludes that  
$$\lim_{j\to+\infty} \widetilde \tau_{\muu,\nue,j}(q_1,q_2) = (1-\eta)  \tau_{\muu}(q_1+q_2) +\eta \Tau(q_1,q_2) =\Taue(q_1,q_2).$$
\end{proof}

Observe that Lemma \ref{lem-maj-cardinal} implies that almost surely, for some constant $C>0$,
\begin{equation}
\label{taux-conv-tau}
| \widetilde \tau_{\muu,\nue,j}(q_1,q_2) -\Taue(q_1,q_2)|\leq C \left( \frac{\log j}{j}\right)^{1/2}.
\end{equation}

Recall the convention that $[\alpha,\beta]$ is the segment with endpoints $\alpha$ and $\beta$, even if $\beta<\alpha$.

\begin{lemma}
\label{lem-legendre1} For every choice of $p_1$ and $p_2$, the Legendre transform of  the mapping $\Tau$ is supported by the segment $[  (-\log_2(1-p_1) ,  \G(- \log_2(1-p_1))), 
(- \log_2 p_1,  \G(- \log_2 p_1 )]$, and for every    $H\in [- \log_2(1-p_1), - \log_2 p_1]$, $\Tau^*(H,  \G(H))= D_{\mu_1}(H)$.

\end{lemma}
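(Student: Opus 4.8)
The plan is to exploit the concavity of $\Tau$ together with the fact that, since both $\muu$ and $\mud$ are governed by the same symbolic frequencies, the gradient $\nabla\Tau$ has only a one-dimensional range; this collapses the bivariate Legendre transform onto the univariate one for $\muu$. I first record that $\Tau$ is concave on $\R^2$: the two terms $p_1^{q_1}p_2^{q_2}$ and $(1-p_1)^{q_1}(1-p_2)^{q_2}$ are log-linear in $(q_1,q_2)$, so their sum is log-convex and $\Tau=-\log_2(\cdots)$ is concave. Hence for each $(H_1,H_2)$ the map $g_{H_1,H_2}(q_1,q_2)=q_1H_1+q_2H_2-\Tau(q_1,q_2)$ is convex, and $\Tau^*(H_1,H_2)=\inf g_{H_1,H_2}$ is finite precisely when $(H_1,H_2)$ lies in the closure of the range of $\nabla\Tau$, and equals $-\infty$ otherwise (a convex differentiable function on $\R^2$ is bounded below iff $0$ belongs to the closure of the range of its gradient).

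Next I would compute $\nabla\Tau$. Setting $A=p_1^{q_1}p_2^{q_2}$, $B=(1-p_1)^{q_1}(1-p_2)^{q_2}$ and $t=A/(A+B)\in(0,1)$, differentiation yields
$$\partial_{q_1}\Tau=-t\log_2 p_1-(1-t)\log_2(1-p_1),\qquad \partial_{q_2}\Tau=-t\log_2 p_2-(1-t)\log_2(1-p_2).$$
Since $A/B=(p_1/(1-p_1))^{q_1}(p_2/(1-p_2))^{q_2}$ ranges over all of $(0,\infty)$ as $(q_1,q_2)$ ranges over $\R^2$ (the parameters are not both $1/2$), the quantity $t$ ranges over $(0,1)$, so $\nabla\Tau$ has range the open segment from $(-\log_2(1-p_1),-\log_2(1-p_2))$ (as $t\to 0$) to $(-\log_2 p_1,-\log_2 p_2)$ (as $t\to 1$). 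The very algebraic identity already used in Lemma \ref{lem-trivial} shows that along this curve $\partial_{q_2}\Tau=\G(\partial_{q_1}\Tau)$, so the range equals $\{(H,\G(H)):H\in[-\log_2(1-p_1),-\log_2 p_1]\}$, which is exactly the announced segment. Together with the convexity remark, this identifies $\supp(\Tau^*)$ with this closed segment.

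To evaluate $\Tau^*$ on the segment I would reduce to the slice $\{q_2=0\}$, where $\Tau(q_1,0)=-\log_2(p_1^{q_1}+(1-p_1)^{q_1})=\tau_{\muu}(q_1)$. Fixing $H$ in the interior of the range and letting $q^*$ solve $\tau_{\muu}'(q^*)=H$, the gradient computation above gives $\nabla\Tau(q^*,0)=(H,\G(H))$, so $(q^*,0)$ is the global minimizer of the convex function $g_{H,\G(H)}$, whence
$$\Tau^*(H,\G(H))=q^*H-\Tau(q^*,0)=q^*H-\tau_{\muu}(q^*)=\tau_{\muu}^*(H)=D_{\muu}(H),$$
the last equality being the univariate multifractal formalism for $\muu$ (Proposition \ref{fm}(2) and the recalled Legendre relations). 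The endpoints $H\in\{-\log_2(1-p_1),-\log_2 p_1\}$ follow by continuity, both sides vanishing there.

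The main obstacle I anticipate is the degeneracy of the transform: one must see clearly that $\nabla\Tau$ is one-dimensional (equivalently, that $\Tau$ is affine along the kernel direction of $(q_1,q_2)\mapsto A/B$), since this is what forces $\supp(\Tau^*)$ to be a genuine segment rather than a full-dimensional region and what makes the reduction to the slice $\{q_2=0\}$ legitimate. Once this structural fact is in hand, the value computation collapses entirely to the classical univariate Legendre duality for $\muu$.
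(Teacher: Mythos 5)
Your proof is correct, and its first half (computing $\nabla\Tau$, observing that $\partial_{q_2}\Tau=\G(\partial_{q_1}\Tau)$ so that the gradient range is one-dimensional, and concluding that $\supp(\Tau^*)$ is the announced segment) is exactly the paper's argument. Where you diverge is in evaluating $\Tau^*$ on the segment: the paper takes the infimum over the whole one-parameter family of critical points of $g_{H,\G(H)}$ and uses the algebraic identity $q_1+\coefg q_2=q$ (equations \eqref{fix-x}--\eqref{equa-q}) to reduce the constrained expression to $qH-\tau_{\muu}(q)$, whereas you simply pick the particular critical point $(q^*,0)$ on the axis $q_2=0$, where $\Tau(\cdot,0)=\tau_{\muu}$, and invoke convexity of $g_{H,\G(H)}$ to conclude it is a global minimizer. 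Your slice argument is cleaner and avoids the $\coefg$ bookkeeping entirely; the paper's version has the side benefit of making the relation $q=q_1+\coefg q_2$ explicit, which is reused later (e.g.\ in Section \ref{sec-bi-form-simple}). Two small caveats: your parenthetical criterion ``a convex differentiable function on $\R^2$ is bounded below iff $0$ belongs to the closure of the range of its gradient'' is false in general in the boundary direction (a convex function can have gradient tending to $0$ while decreasing to $-\infty$), so the two endpoints of the segment should be handled by a direct computation rather than by this principle --- e.g.\ at $H=-\log_2(1-p_1)$ one has $q_1H+q_2\G(H)-\Tau(q_1,q_2)=\log_2\bigl(1+(p_1/(1-p_1))^{q_1}(p_2/(1-p_2))^{q_2}\bigr)$, whose infimum is $0=D_{\muu}(H)$; this also replaces the slightly loose ``by continuity'' at the endpoints ($\Tau^*$ is only upper semicontinuous a priori). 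Neither point affects the validity of the result, and the paper itself is no more detailed on these boundary issues.
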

\begin{proof}
For every $(q_1,q_2)\in \R^2$, let us write 
\begin{align*}
H_1 & = \frac{\partial}{\partial q_1} \Tau(q_1,q_2)   = \frac{-p_1^{q_1}p_2^{q_2} \log_2(p_1)-(1-p_1)^{q_1} (1-p_2)^{q_2} \log_2(1-p_1)}{p_1^{q_1}p_2^{q_2}+(1-p_1)^{q_1} (1-p_2)^{q_2}} \\
H_2& = \frac{\partial}{\partial q_2} \Tau(q_1,q_2)  = \frac{-p_1^{q_1}p_2^{q_2} \log_2(p_2)-(1-p_1)^{q_1} (1-p_2)^{q_2} \log_2(1-p_2)}{p_1^{q_1}p_2^{q_2}+(1-p_1)^{q_1} (1-p_2)^{q_2}}=   \G(H_1).
\end{align*}
There is an affine relationship between the two partial derivatives, so the support of $\Tau^*$ is  the segment $[- \log_2(1-p_1),  \G(- \log_2(1-p_1))), 
(- \log_2 p_1 ,  \G(- \log_2 p_1))] $, which coincides with the segment  $[(- \log_2(1-p_1),  - \log_2(1-p_2)), 
(- \log_2 p_1,  - \log_2 p_2)] $ when $p_1,p_2$ are on the same side of $1/2$, and equals $[(- \log_2(1-p_1) ,  - \log_2 p_2), 
(- \log_2 p_1,  - \log_2(1-p_2)] $ when 1/2 is located in the middle of $p_1$ and $p_2$.

Next, fix some $H  \in (- \log_2(1-p_1),- \log_2 p_1)$. Consider the unique $q \in \R$ such that $H =\tau_{\mu_1}'(q )= \frac{-p_1^{q } \log_2(p_1) -  (1-p_1)^{q}  \log_2(1-p_1)}{p_1^{q }+(1-p_1)^{q }} = - x \log_2(p_1) -(1-x)\log_2(1-p_1)$, with $x= \frac{p_1^{q } }{p_1^{q }+(1-p_1)^{q }} $. 
Observe that   combined with the definition of $H_1$ above, this remark implies that  
\begin{equation}
\label{fix-x}
x= \frac{p_1^{q} }{p_1^{q}+(1-p_1)^{q}} =\frac{p_1^{q_1}p_2^{q_2}  }{p_1^{q_1}p_2^{q_2}+(1-p_1)^{q_1} (1-p_2)^{q_2}} .
\end{equation}
In particular, after simplification, 
$$q\log_2 \frac{1-p_1}{p_1}= \log_2 ((1-p_1)^{q_1} (1-p_2)^{q_2})-\log_2 p_1^{q_1}p_2^{q_2}.$$

By Legendre transform, one has 
$$D_{\mu_1}(H) = qH-\tau_{\mu_{p_1}}(q )=sH+ \log_2(p_1^{q}+(1-p_1)^{q}) = q(H-\log_2p_1) +\log_2x  .$$

The basic properties of the 2-dimensional Legendre transform imply that the minimum in $\Tau^*(H,  \G(H) )   = \inf_{q_1,q_2\in \R} (q_1H +q_2   \G(H) -\Tau(q_1,q_2)) $ is reached at  the pairs $(q_1,q_2)$ for which   $\frac{\partial}{\partial q_1} \Tau(q_1,q_2)  =H$ and $\frac{\partial}{\partial q_2} \Tau(q_1,q_2) =  \G(H)$.
Hence, using \eqref{fix-x} and recalling \eqref{defg},
\begin{align*}
\Tau^*(H,  \G(H) ) & = \inf_{q_1,q_2\in \R: \eqref{fix-x} \mbox{\tiny holds true}} (q_1H +q_2   \G(H) -\Tau(q_1,q_2))\\
& = \inf_{q_1,q_2\in \R: \eqref{fix-x} \mbox{\tiny holds true}} (q_1(H +\log_2 p_1)+q_2(   \G(H)+\log_2 p_2)   +\log_2 x)\\
& = \inf_{q_1,q_2\in \R: \eqref{fix-x} \mbox{\tiny holds true}} ((q_1+ q_2\coefg  )(H +\log_2p_1 )   +\log_2 x).
\end{align*}
 To conclude, one observes that 
\begin{eqnarray}
\label{equa-q}
q_1+ q_2\coefg  &= & q_1+q_2\frac{\log(p_2/(1-p_2))}{\log(p_1/(1-p_1))}= \frac{q_1\log(p_1/(1-p_1))+q_2 {\log(p_2/(1-p_2))} }{\log(p_1/(1-p_1))}\\
& = & \frac{\log p_1^{q_1}p_2^{q_2} +\log_2 (1-p_1)^{q_1}(1-p_2)^{q_2}}{ \log_2(p_1/(1-p_1))} =q,
\nonumber
\end{eqnarray}
which implies that $T^*(H,\G(H))=D_\muu(H)$, hence the result.  
 \end{proof}

The support  of   $\Taue$ is  a parallelogram, this will follow in particular from the multifractal formalism in Section \ref{sec-bi-form-simple}.

\mk

The computation of $ \tau_{\muu,\nue}$ is a bit more delicate, and   depends on the relative positions of $p_1$ and $p_2$.

\begin{lemma}
\label{lem-estimtau2}
Assume that $0<p_1,p_2 <1/2 $ or $1/2<p_1,p_2 <1 $. Then  $  \tau_{\muu,\nue}  = \Taue$.
\end{lemma}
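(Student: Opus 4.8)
The goal is to upgrade the equality $\widetilde{\tau}_{\muu,\nue}=\Taue$ obtained in Lemma \ref{lem-estimtau1} (which uses $\mu(I)$) to the same equality for $\tau_{\muu,\nue}$ (which uses $\mu(3I)$), under the hypothesis that $p_1,p_2$ lie on the same side of $1/2$. The plan is to show that passing from $I$ to $3I=I^-\cup I\cup I^+$ does not change the $L^q$-spectrum in this case, by controlling the ratios $\muu(3I)/\muu(I)$ and $\nue(3I)/\nue(I)$. Since $\tau_{\muu,\nue,j}$ and $\widetilde{\tau}_{\muu,\nue,j}$ differ only by replacing $\mu(I)$ with $\mu(3I)$, the whole difference is governed by how much mass the neighboring dyadic intervals $I^+$ and $I^-$ can carry relative to $I$.

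First I would record the trivial lower bound $\tau_{\muu,\nue}\le\widetilde{\tau}_{\muu,\nue}=\Taue$, which holds for \emph{any} measures because $\mu(3I)\ge\mu(I)$ forces the sum defining $\tau_{\muu,\nue,j}$ to be at least as large, hence (after the $-\frac1j\log_2$) the spectrum to be at most as large; this direction needs no hypothesis on $p_1,p_2$. The substantive direction is the reverse inequality $\tau_{\muu,\nue}\ge\Taue$, i.e.\ an upper bound on $\sum_{I\in\mathcal{D}_j}\muu(3I)^{q_1}\nue(3I)^{q_2}$.

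The key structural fact to exploit is that for a binomial-type measure where both parameters are on the same side of $1/2$, consecutive dyadic intervals have comparable measures up to a \emph{subexponential} factor. Concretely, for the binomial measure $\muu$ one has a classical estimate $\muu(I^{\pm})\le C_j\,\muu(I)$ where $\log_2 C_j=o(j)$: two dyadic intervals of generation $j$ that are adjacent differ in their address $w$ by a carry, changing only a bounded-in-expectation number of the trailing digits, so the ratio of masses is bounded by $(\max(p_1,1-p_1)/\min(p_1,1-p_1))^{k}$ where $k$ is the length of the terminal block of equal digits; such blocks have length $o(j)$ for all but negligibly many intervals, and the exceptional intervals contribute negligibly to the sum. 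For $\nue=\muu(\cdot^{\mathcal{A}})\mud(\cdot^{\mathcal{A}^c})$ the same reasoning applies coordinate-wise using the decomposition \eqref{decomp-nu}, again yielding $\nue(I^{\pm})\le C_j'\nue(I)$ with $\log_2 C_j'=o(j)$, uniformly off a negligible set, and here is exactly where $p_1,p_2$ on the same side of $1/2$ is used: it guarantees that the ``heavy'' subinterval direction is consistent, so a neighbor cannot be exponentially heavier. From $\muu(3I)\le 3C_j\muu(I)$ and $\nue(3I)\le 3C_j'\nue(I)$ one gets, for fixed $(q_1,q_2)$, that $\muu(3I)^{q_1}\nue(3I)^{q_2}\le (3C_j)^{|q_1|}(3C_j')^{|q_2|}\muu(I)^{q_1}\nue(I)^{q_2}$ up to reshuffling neighbor contributions, and since $\log_2(C_jC_j')=o(j)$ this gives $\tau_{\muu,\nue,j}\ge\widetilde{\tau}_{\muu,\nue,j}-o(1)$, hence $\tau_{\muu,\nue}\ge\Taue$. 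Combined with the trivial bound and Lemma \ref{lem-estimtau1}, this yields $\tau_{\muu,\nue}=\Taue$.

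The main obstacle I anticipate is making the neighbor-mass comparison genuinely uniform and quantitative: one must argue that the dyadic intervals $I$ for which $\muu(I^\pm)$ or $\nue(I^\pm)$ is exponentially larger than $\muu(I)$ respectively $\nue(I)$ are so few, and carry so little mass, that they do not affect the $\liminf$ of $-\frac1j\log_2$ of the $q$-sum. I would handle this by splitting $\mathcal{D}_j$ according to the length of the terminal constant block of the address $w$ (equivalently, by how large the carry is when moving to the neighbor) and bounding the contribution of blocks of length $\ge \epsilon j$ using the exponential decay of such configurations together with the large-deviation estimates of Proposition \ref{fm}; the same split must be done simultaneously for the $\mathcal{A}$ and $\mathcal{A}^c$ coordinates governing $\nue$, using Lemma \ref{lem-maj-cardinal} to control $\jA$ and $\jAc$. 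The hypothesis $p_1,p_2$ on the same side of $1/2$ is essential precisely to rule out the phenomenon, central to Theorem \ref{mainth2}, whereby a neighbor can be heavier in one measure while lighter in the other, which would prevent the $3I$-sum from collapsing onto the $I$-sum and would instead force the genuine $\min(\Taue,\Tauet)$ behavior.
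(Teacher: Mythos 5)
Your overall architecture (two-sided comparison of the $3I$-sum with the $I$-sum, with the same-side hypothesis ensuring the two measures behave consistently on neighbors) is in the right spirit, but two of your key steps do not hold as stated. First, the ``trivial lower bound'' $\tau_{\muu,\nue}\le\widetilde\tau_{\muu,\nue}$ deduced from $\mu(3I)\ge\mu(I)$ is only valid for $q_1,q_2\ge 0$: for negative exponents the termwise inequality reverses ($\mu(3I)^{q}\le\mu(I)^{q}$ for $q<0$), and for mixed signs it is indeterminate. Since the bivariate $L^q$-spectrum and its Legendre transform require all $(q_1,q_2)\in\R^2$, this direction is not free. The paper obtains it instead by nesting: for $w'=w01\in\Sigma_{j+2}$ one has $3I_{w'}\subset I_w$ and $\muu(I_{w'})=p_1(1-p_1)\muu(I_w)$ (similarly for $\nue$), which yields a sign-independent comparison $\widetilde\tau_{\muu,\nue,j+2}\le\tau_{\muu,\nue,j+2}\le\widetilde\tau_{\muu,\nue,j}$.

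Second, your central estimate $\muu(I^{\pm})\le C_j\,\muu(I)$ with $\log_2 C_j=o(j)$ is false for a nonnegligible family of intervals: for $w=w'10^{\ell}$ with $\ell$ comparable to $j$, the left neighbor $I_{w'01^{\ell}}$ is heavier by a factor $((1-p_1)/p_1)^{\ell-1}$, which is exponential. These are exactly the extremal intervals that dominate $\sum_I\mu(I)^{q}$ for large $|q|$, so they cannot be dismissed as contributing negligibly; your proposed split by the length of the terminal constant block would have to confront them head-on, and the hypothesis $p_1,p_2$ on the same side of $1/2$ does \emph{not} prevent a neighbor from being exponentially heavier. What the hypothesis actually buys --- and this is the paper's key point --- is that the \emph{heaviest} of the three neighbors $I_w^-,I_w,I_w^+$ is (up to a uniform constant) the same interval $I_{\widetilde w}$ for $\muu$ and for $\nue$ simultaneously, because which neighbor dominates is governed only by the digit counts and all parameters satisfy $p_i\le 1-p_i$. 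One then has $\muu(I_{\widetilde w})\le\muu(3I_w)\le 3\muu(I_{\widetilde w})$ and $\nue(3I_w)\asymp\nue(I_{\widetilde w})$ with uniform constants, valid for every $w$ and every sign of $q_1,q_2$, and the map $w\mapsto\widetilde w$ is at most $3$-to-$1$, so the $3I$-sum is dominated by a constant times the $I$-sum with no exceptional set at all. I would encourage you to replace the comparison of $\mu(3I)$ with $\mu(I)$ by a comparison with the heaviest neighbor; as written, your argument has genuine gaps at both the negative-$q$ direction and the treatment of the extremal intervals.
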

\begin{proof}
Without loss of generality, we assume $p_1,p_2\leq 1/2$.

For $w\in \Sigma_j$  one has  $\muu(3I_{w}) =  \muu(I_{w}^-) +\muu(I_{w}) +\muu(I^+_{w}) $.   Call $\widetilde w\in \Sigma_j$ the   word    for which $\muu(I_{\widetilde w})$ is maximal amongst  the three terms. Hence $\muu(I_{\widetilde w})\leq \muu(3I_{w})  \leq 3 \muu(I_{\widetilde w})$. 

Observe that the fact that $\muu(I_{\widetilde w})$ is maximal depends only on the numbers of zeros and ones in  $w$ and $\widetilde w$ in $\Sigma_j$.
 Since $p_1 $ and $p_2$ are both less than 1/2 (meaning that $p_i\leq 1-p_i$),   the same inequality holds true for $\nue$:  $\nue(I_{\widetilde w})$ is maximal amongst $\nue(I_{w'})$, $\nue(I_{w})$, and $\nue(I^+_{w}) $, and $\nue(I_{\widetilde w}) \leq \nue(3I_{w})  \leq 3 \nue(I_{\widetilde w})$.

Subsequently,  for a constant $C$ depending on the values (and on the sign) of $q_1$ and $q_2$, 
\begin{align*}
 \sum_{w\in \Sigma_j}  \muu(3I_w)^{q_1}\nue(3 I_w)^{q_2} &   \leq C  \sum_{w\in \Sigma_j} \muu(I_w)^{q_1} \nue( I_w)^{q_2}  = C \cdot 2^{-j \widetilde \tau_{\muu,\nue,j}(q_1,q_2)}.
 \end{align*}

Next, for every $w\in \Sigma_j$, setting $w'=w01\in \Sigma_{j+2}$, then $3I_{w'}\subset I_w$ and $ p_1(1-p_1)  \muu (I_{w}) =   \mu(I_{w'})\leq \muu(3I_{w'}) \leq \muu (I_{w}) $, and similarly for some constant $C$ that depends on $p_1$ and $p_2$, $ C \nue(I_{w}) \leq   \nue(3I_{w'}) \leq \nue(I_{w}) $. Hence, for some other constant $C'$ that depends on the $p_i$ and $q_i$, one gets
   $$ 2^{-j \widetilde \tau_{\muu,\nue,j}(q_1,q_2)} =   \sum_{w \in \Sigma_j} \muu(  I _w)^{q_1} \nue( I _w )^{q_2}  \leq C'  \sum_{w' \in \Sigma_{j+2} } \muu(3 I _{w'})^{q_1} \nue(3I _{w'} )^{q_2}, $$
  As a conclusion, $   \widetilde \tau_{\muu,\nue,j+2}(q_1,q_2)   \leq  \tau_{\muu,\nue,j+2}(q_1,q_2)  \leq \widetilde \tau_{\muu,\nue,j}(q_1,q_2)$.   By Lemma \ref{lem-estimtau1},  $ \widetilde \tau_{\muu,\nue,j}(q_1,q_2) $ converges to $ \Taue(q_1,q_2) $ when $j$ tends to infinity, hence the result.
\end{proof}

\subsection {Computation of the bivariate \ml spectrum}

We find a parametrization of $D_{\muu,\nue}$   allowing us to compute it explicitly and to establish     the \ml formalism  in the next subsection.  
\begin{theorem}
\label{th-spec1}
Let $0<p_1,p_2<1/2$ or  $1/2<p_1,p_2<1$.

Let $\eta\in[0,1]$, and consider the mapping 
\begin{align*}
\nonumber
\mathcal{M}: [H_{1,\min},H_{1,\max}]^2& \longrightarrow   & (\R^+)^2 \hspace{35mm} \\
 (\alpha , \beta )\hspace{9mm} & \longmapsto  & (\eta \alpha+(1-\eta)\beta, \eta \alpha+(1-\eta) \G(\beta)). 
\end{align*}
With probability one, the bivariate spectrum  $D_{\muu,\nue}$ is parametrized as follows: for every pair $(\alpha,\beta)\in [H_{1,\min},H_{1,\max}]\times [H_{2,\min},H_{2,\max}]$, call $(H_1,H_2) = \mathcal{M}(\alpha,\beta)$. Then
\begin{align}   
\label{speccor}
D_{\muu,\nue}(H_1,H_2) = \eta D_{\muu}(\alpha)+(1-\eta)D_{\muu}(\beta),
\end{align}
and $D_{\muu,\nue}(H_1,H_2) =-\infty$ whenever $(H_1,H_2)$ cannot be written as $\mathcal{M}(\alpha,\beta)$ for some pair  $(\alpha,\beta)\in [H_{1,\min},H_{1,\max}]\times [H_{2,\min},H_{2,\max}]$.

\end{theorem}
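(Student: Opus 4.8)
Throughout I work on the probability-one event of Lemma \ref{lem-maj-cardinal}, so that $\jA/j\to\eta$ and $\jAc/j\to 1-\eta$. The whole parametrization is dictated by the two factorizations $\muu(I)=\muu(\IA)\muu(\IAc)$ and, from \eqref{decomp-nu}, $\nue(I)=\muu(\IA)\mud(\IAc)$, combined with Lemma \ref{lem-trivial}: if $\muu$ has per-letter exponent $\beta$ along the $\mathcal A^c$-letters, then $\mud$ has exponent $\G(\beta)$ there. Hence, for a point $x$ whose $\muu$-exponent equals $\alpha$ along the $\mathcal A$-letters and $\beta$ along the $\mathcal A^c$-letters, one reads off (using $\jA\simeq\eta j$, $\jAc\simeq(1-\eta)j$) that $\dimu(\muu,x)=\eta\alpha+(1-\eta)\beta=H_1$ and $\dimu(\nue,x)=\eta\alpha+(1-\eta)\G(\beta)=H_2$, i.e. $(H_1,H_2)=\mathcal M(\alpha,\beta)$. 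Since $\coefg>0$ (same-side assumption), $\mathcal M$ is an affine bijection of $[H_{1,\min},H_{1,\max}]^2$ onto the parallelogram $\Pzeroeta$, so each admissible $(H_1,H_2)$ has a unique preimage $(\alpha,\beta)$. I would prove \eqref{speccor} by matching a lower and an upper bound for $\dim\underline E_{\muu,\nue}(H_1,H_2)$, both equal to $\eta D_{\muu}(\alpha)+(1-\eta)D_{\muu}(\beta)$. Below set $\phi(a,b)=\eta a+(1-\eta)b$ and $\psi(a,b)=\eta a+(1-\eta)\G(b)$.

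\textbf{Lower bound.} Fix $(\alpha,\beta)$ and consider $F'=E^{p_1,\mathcal A}_{\alpha,\alpha}\cap E^{p_1,\mathcal A^c}_{\beta,\beta}$, the set on which the $\muu$-exponent is \emph{constantly} $\alpha$ on the $\mathcal A$-letters and $\beta$ on the $\mathcal A^c$-letters. On $F'$ the heuristic above becomes a genuine equality of limits (the neighbours $I,I^{\pm}$ being handled through the $I^{n,\cdot}$ notation), whence $F'\subseteq\underline E_{\muu,\nue}(H_1,H_2)$. Applying Proposition \ref{prop_majdimE} with the pair $(\muu,\muu)$, $\mathcal B=\mathcal A$, $\weta=\eta$ and the degenerate ranges $[\alpha,\alpha]$, $[\beta,\beta]$ gives $\dim F'=\eta D_{\muu}(\alpha)+(1-\eta)D_{\muu}(\beta)$, the desired lower bound.

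\textbf{Upper bound (the delicate part).} For $x\in\underline E_{\muu,\nue}(H_1,H_2)$, write $a_j,b_j$ for the per-letter $\muu$-exponents of $\IA_j(x),\IAc_j(x)$, so that $\liminf_j\phi(a_j,b_j)=H_1$ and $\liminf_j\psi(a_j,b_j)=H_2$. By compactness I extract a limit point $P^*=(a^*,b^*)$ of $(a_j,b_j)$ realising the first liminf (so $\phi(P^*)=H_1$ and $\psi(P^*)\ge H_2$) and a limit point $P^{**}=(a^{**},b^{**})$ realising the second. \emph{The main obstacle is that these two liminfs need not be realised at the same scales}: at a scale where $\muu$ is near-maximal, $\nue$ may be far from maximal, and covering only at the $\muu$-optimal scales over-counts, yielding merely $\dim\le D_{\muu}(H_1)$, strictly above the claimed value. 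I would circumvent this by passing to the shuffle coordinates $y=x^{\mathcal A}$, $z=x^{\mathcal A^c}$: the $\muu$-exponents of $y$ accumulate at both $a^*$ and $a^{**}$, those of $z$ at both $b^*$ and $b^{**}$. Covering $y$ at the scales where its exponent is close to $a^*$, or to $a^{**}$ (whichever is cheaper), and similarly for $z$, then recombining the two covers through the density-$\eta$ interleaving exactly as in the upper-bound half of Proposition \ref{prop_majdimE}, yields
\[
\dim\underline E_{\muu,\nue}(H_1,H_2)\le \eta\min\big(D_{\muu}(a^*),D_{\muu}(a^{**})\big)+(1-\eta)\min\big(D_{\muu}(b^*),D_{\muu}(b^{**})\big).
\]
Finally I would optimise the right-hand side over all admissible $P^*,P^{**}$. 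This is where $\coefg>0$ is decisive: feeding the constraints $\phi(P^*)=H_1$, $\psi(P^*)\ge H_2$, $\psi(P^{**})=H_2$, $\phi(P^{**})\ge H_1$ into the identities $\phi-\psi=(1-\eta)(b-\G(b))$ and using that $\G$ is increasing, one checks that $a^*$ and $a^{**}$ lie on opposite sides of $\alpha$ and $b^*,b^{**}$ on opposite sides of $\beta$; by concavity (unimodality) of $D_{\muu}$ this forces $\min(D_{\muu}(a^*),D_{\muu}(a^{**}))\le D_{\muu}(\alpha)$ and $\min(D_{\muu}(b^*),D_{\muu}(b^{**}))\le D_{\muu}(\beta)$, so the supremum is $\eta D_{\muu}(\alpha)+(1-\eta)D_{\muu}(\beta)$, attained at $P^*=P^{**}=(\alpha,\beta)$ and matching the lower bound.

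\textbf{Support and the $-\infty$ values.} It remains to see $D_{\muu,\nue}=-\infty$ off $\Pzeroeta$, i.e. that $(\dimu(\muu,x),\dimu(\nue,x))\in\Pzeroeta$ for every $x$. Since $a_j,b_j\in[H_{1,\min},H_{1,\max}]$ always, every pair $(\phi(a_j,b_j),\psi(a_j,b_j))$ lies in $\Pzeroeta$, and the limit points $P^*,P^{**}$ above give $H_1=\min_j\phi$ with $\psi(P^*)\ge H_2$ and $H_2=\min_j\psi$ with $\phi(P^{**})\ge H_1$. Because $\coefg>0$, the two edges of $\Pzeroeta$ issuing from its lowest vertex have positive slopes $1$ and $\coefg$, so the lower boundary $\phi\mapsto\psi_{\min}(\phi)$ is increasing; this monotonicity upgrades the two one-sided inequalities to $(H_1,H_2)\in\Pzeroeta$. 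This is exactly the property that fails when $p_1,p_2$ are on opposite sides of $1/2$, and it is the reason the support in Theorem \ref{mainth2} becomes a strictly larger pentagon.
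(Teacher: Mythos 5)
Your lower bound and your description of the support are correct and essentially the paper's own arguments (Proposition \ref{prop_majdimE} with $\mathcal{B}=\mathcal{A}$ and degenerate ranges for the former; monotonicity of the lower edge of the parallelogram for the latter). The genuine gap is in the upper bound, at the displayed inequality
\[
\dim\underline E_{\muu,\nue}(H_1,H_2)\le \eta\min\big(D_{\muu}(a^*),D_{\muu}(a^{**})\big)+(1-\eta)\min\big(D_{\muu}(b^*),D_{\muu}(b^{**})\big).
\]
The covering scales of $y=x^{\mathcal A}$ and $z=x^{\mathcal A^c}$ cannot be chosen independently: covering $x$ by $I_j(x)$ covers $y$ at generation $j^{\mathcal A}$ and $z$ at generation $j^{\mathcal A^c}$, and both are driven by the single parameter $j$. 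At a scale where the pattern is close to $P^*$ the counting cost is $\eta D_{\muu}(a^*)+(1-\eta)D_{\muu}(b^*)$; at a scale where it is close to $P^{**}$ it is $\eta D_{\muu}(a^{**})+(1-\eta)D_{\muu}(b^{**})$; there is no scale at which one pays $D_{\muu}(a^*)$ on the $\mathcal A$-letters and simultaneously $D_{\muu}(b^{**})$ on the $\mathcal A^c$-letters. The best a two-family covering yields is the minimum of the two convex combinations, not the convex combination of the coordinatewise minima, and the former is strictly larger whenever the two minima are attained at different indices. Worse, your inequality is not merely unproven but false: the alternating-block construction of Proposition \ref{prop-expo-simultane} produces, inside $\underline E_{\muu,\nue}(H_1,H_2)$, a subset of dimension exactly $\min\big(\eta D_{\muu}(a^*)+(1-\eta)D_{\muu}(b^*),\,\eta D_{\muu}(a^{**})+(1-\eta)D_{\muu}(b^{**})\big)$, which exceeds your bound as soon as, say, $a^*,a^{**}$ lie on the same side of $H_{p_1,s}$ with $D_{\muu}(a^*)<D_{\muu}(a^{**})$ while $D_{\muu}(b^{**})<D_{\muu}(b^*)$ --- a configuration compatible with your constraints $a^*\le\alpha\le a^{**}$, $b^{**}\le\beta\le b^*$.

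Because of this, the final optimisation, though correctly carried out for the quantity you wrote down, does not bound the dimension; the quantity that must be optimised over admissible $(P^*,P^{**})$ is the minimum of the two convex combinations, and your ``opposite sides plus unimodality'' argument does not apply to it. The paper's route avoids the difficulty altogether: it covers only at the scales $j_n$ realising the $\muu$-liminf, but injects the \emph{permanent} constraint $\nue(\widetilde I_j(x))\le 2^{-j(H_2-\ep)}$ (valid at every large $j$, not only at the $\psi$-optimal scales) to force $\beta_{j_n}\ge\beta-O(\ep)$ and hence $\alpha_{j_n}\le\alpha+O(\ep)$, then discretises the admissible range of $\alpha_{j_n}$, counts with Proposition \ref{fm}, and concludes by concavity of $D_{\muu}$ along the line $\eta a+(1-\eta)b=H_1$. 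If you wish to keep your two-family strategy, you must replace your bound by $\min$ of the two convex combinations and then prove separately that its supremum over admissible pairs does not exceed $\eta D_{\muu}(\alpha)+(1-\eta)D_{\muu}(\beta)$; that extra step is precisely the delicate work the paper has to do in the harder setting of Theorem \ref{mainth2}.
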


In the following proofs we   always assume that $0<p_1<p_2<1/2$, and  in this case  $H_{i,\min}= -\log_2(1-p_i)$ and $H_{i,\max}= - \log_2 p_i$. The other case  $1/2<p_1<p_2<1$ is   similar.  

We start by identifying the support of the bivariate spectrum.
\begin{lemma}
\label{lemm-parall}
With probability one, the support of $D_{\muu,\nue}$ is the parallelogram $\Pzeroeta	=\mathcal{M} ([H_{1,\min},H_{1,\max}]^2)$.

\end{lemma}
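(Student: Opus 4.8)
The plan is to establish the two inclusions $\Pzeroeta \subseteq \support(D_{\muu,\nue})$ and $\support(D_{\muu,\nue}) \subseteq \Pzeroeta$ separately, working on the probability-one event on which Lemma \ref{lem-maj-cardinal} holds, so that $\jA/j \to \eta$ and $\jAc/j \to 1-\eta$. The backbone is the factorization \eqref{decomp-nu}, which for every $x$ and $j$ gives $\muu(I_j(x)) = \muu(\IA_j(x))\,\muu(\IAc_j(x))$ and $\nue(I_j(x)) = \muu(\IA_j(x))\,\mud(\IAc_j(x))$, together with the fact that $\dimu(\mu,x)$ equals the liminf of $\log\mu(\wI_j(x))/(-j)$. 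I introduce the instantaneous exponents $a_j := \log\muu(\IA_j(x))/(-\jA)$ and $b_j := \log\muu(\IAc_j(x))/(-\jAc)$, both lying in $[H_{1,\min},H_{1,\max}]$; by Lemma \ref{lem-trivial} the same word gives $\log\mud(\IAc_j(x))/(-\jAc) = \G(b_j)$. Using $\jA/j\to\eta$ this yields $\log\muu(I_j(x))/(-j) = \eta a_j + (1-\eta)b_j + o(1)$ and $\log\nue(I_j(x))/(-j) = \eta a_j + (1-\eta)\G(b_j) + o(1)$, i.e. the instantaneous pair equals $\mathcal{M}(a_j,b_j)+o(1)$. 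I will assume $\eta\in(0,1)$ throughout, the cases $\eta\in\{0,1\}$ being deterministic ($\nue=\mud$ or $\muu$, the latter recovering \cite{ACHA2018}).

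For $\Pzeroeta \subseteq \support(D_{\muu,\nue})$, I fix $(\alpha,\beta)\in[H_{1,\min},H_{1,\max}]^2$ and apply Proposition \ref{prop_majdimE} with $\mathcal{B}=\mathcal{A}$, both measures equal to $\muu$ (the special case in which the proposition's second parameter is taken equal to $p_1$), and $\alpha_1=\beta_1=\alpha$, $\alpha_2=\beta_2=\beta$. The resulting set is nonempty — its lower-bound half exhibits a probability measure carried by it — and for every $x$ in it the two limits $\log\muu(\IA_j(x))/(-\jA)\to\alpha$ and $\log\muu(\IAc_j(x))/(-\jAc)\to\beta$ genuinely exist. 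For such $x$ the computation above upgrades to honest limits, so $\dimu(\muu,x)=\eta\alpha+(1-\eta)\beta$ and $\dimu(\nue,x)=\eta\alpha+(1-\eta)\G(\beta)$, that is $(\dimu(\muu,x),\dimu(\nue,x))=\mathcal{M}(\alpha,\beta)$. Hence every point of $\mathcal{M}([H_{1,\min},H_{1,\max}]^2)$ carries a nonempty level set, and since $\support$ is closed this gives $\Pzeroeta\subseteq\support(D_{\muu,\nue})$.

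The reverse inclusion is the crux, and is where the hypothesis that $p_1,p_2$ lie on the same side of $1/2$ (equivalently $\coefg>0$) is essential, through two facts. First, because $p_1,p_2<1/2$, among the three intervals forming $\wI_j(x)$ the one of maximal mass is the \emph{same} word $\widetilde w$ for $\muu$, $\mud$ and $\nue$, exactly as in the proof of Lemma \ref{lem-estimtau2}; consequently $\widetilde u_j := \log\muu(\wI_j(x))/(-j)$ and $\widetilde v_j := \log\nue(\wI_j(x))/(-j)$ are, up to $o(1)$, read off a common word and equal $\eta a_j+(1-\eta)b_j$ and $\eta a_j+(1-\eta)\G(b_j)$ for a single pair $(a_j,b_j)\in[H_{1,\min},H_{1,\max}]^2$. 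The key algebraic point is that $\widetilde v_j-\widetilde u_j=(1-\eta)(\G(b_j)-b_j)+o(1)$ depends only on $b_j$, while $\coefg\widetilde u_j-\widetilde v_j=\eta(\coefg-1)a_j+(1-\eta)(\log_2(1-p_2)-\coefg\log_2(1-p_1))+o(1)$ depends only on $a_j$; each therefore ranges in a fixed compact interval $J_1$, resp. $J_2$. Since $\mathcal{M}$ is an affine isomorphism when $\eta\in(0,1)$ and $\coefg\neq1$, the two band conditions $H_2-H_1\in J_1$ and $\coefg H_1-H_2\in J_2$ are precisely the two pairs of parallel edges cutting out $\Pzeroeta$.

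It remains to transfer these band constraints from the instantaneous exponents to the liminf exponents. Writing $H_1=\dimu(\muu,x)=\liminf_j\widetilde u_j$ and $H_2=\dimu(\nue,x)=\liminf_j\widetilde v_j$, I invoke the elementary sandwich $\liminf(X_j-Y_j)\le \liminf X_j-\liminf Y_j\le\limsup(X_j-Y_j)$, once with $(X_j,Y_j)=(\widetilde v_j,\widetilde u_j)$ and once with $(X_j,Y_j)=(\coefg\widetilde u_j,\widetilde v_j)$; in the second instance $\coefg>0$ ensures $\liminf(\coefg\widetilde u_j)=\coefg\liminf\widetilde u_j$. This yields $H_2-H_1\in J_1$ and $\coefg H_1-H_2\in J_2$, hence $(H_1,H_2)\in\Pzeroeta$, proving the inclusion. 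The main obstacle is exactly this last step: the two liminfs defining $H_1$ and $H_2$ may be realized along different scales, so one cannot simply pass to the limit in $\mathcal{M}(a_j,b_j)$. What rescues the argument is that the positivity of $\coefg$ makes $\widetilde u_j$ and $\widetilde v_j$ comonotone — and forces the common maximal neighbor — which is what legitimizes the sandwich; this comonotonicity breaks down when $p_1,p_2$ straddle $1/2$, and that failure is precisely why in that regime the support of $D_{\muu,\nue}$ will turn out to be strictly larger than a parallelogram.
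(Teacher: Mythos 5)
Your proof is correct, and while the easy inclusion ($\Pzeroeta\subseteq\support(D_{\muu,\nue})$, via the interleaving/Proposition \ref{prop_majdimE} construction) is the same as the paper's, your argument for the hard inclusion is organized genuinely differently. The paper fixes $H_1$, compares the three intervals $[a_1,b_1]$ (the slice of $\Pzeroeta$ at $H_1$), $[a_2,b_2]$ (second coordinates over the half-parallelogram $\{H_1'\geq H_1\}$) and $[a_3,b_3]$ (admissible $H_2$), gets $b_3=b_1$ from the subsequence along which the $\muu$-exponent attains $H_1$, and gets $a_3\geq a_1$ from the everywhere-valid bound $H_1'\geq H_1-\ep_j$ together with the fact that both edges of the parallelogram have positive slope. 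You instead extract the two affine functionals $H_2-H_1$ and $\coefg H_1-H_2$, observe that on instantaneous exponents they decouple into functions of $\beta_j$ alone and $\alpha_j$ alone, and transfer the resulting band constraints to the liminf exponents via the superadditivity sandwich $\liminf(X_j-Y_j)\le\liminf X_j-\liminf Y_j\le\limsup(X_j-Y_j)$ (using $\coefg>0$ to pull the constant out of the liminf). This is a clean repackaging: it makes explicit, in a single inequality, the point that the two liminfs may be attained at different scales yet their difference is always confined to a fixed band, and it isolates exactly where $\coefg>0$ enters. What it costs you is an extra ingredient the paper does not need: since you work with the single maximal neighbor among $\{I,I^+,I^-\}$, you must know that (up to multiplicative constants) the same word realizes the maximum for $\muu$ and for $\nue$; this is indeed available from the proof of Lemma \ref{lem-estimtau2} under the standing hypothesis $p_1,p_2<1/2$, though strictly speaking it holds only up to bounded factors (which is harmless for exponents). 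The paper sidesteps this by bounding each of the three neighbors separately, using only the one-sided estimate that every neighbor's $\muu$-exponent is at least $H_1-\ep_j$. Finally, your parenthetical assumption $\coefg\neq 1$ (equivalently $p_1\neq p_2$) is needed for the two bands to cut out a nondegenerate parallelogram; the paper makes the same implicit assumption.
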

\begin{proof}

Since $0<p_1 < p_2<1/2$, the slope   of the affine mapping $\G$ is $\coefg >1$.  The other cases are similar. 

We fix  $H_1$. 

Consider $x$   with $\dimu(\muu,x)=H_1$. There   exists a sequence $(\ep_j)$ depending on $x$ and tending to 0 such that  :
\begin{itemize}
\item[(i)]
for every $j$ and $\widetilde I\in \{I,I^+,I^-\}$, $\muu( \widetilde I_j(x)) =2^{-jH'_1}$ where $H'_1\in [H_1-\ep_j, H_{1,\max}+\ep_j]$, 
\item[(ii)]
 there are infinitely many integers $j$ such that $\muu( \widetilde I_j(x)) \geq 2^{-j(H_1+\ep_j)}$ for some $\widetilde I\in \{I,I^+,I^-\}$.
\end{itemize}

We then  look for  the possible values for $H_2 $. 
Let us   write $\muu(I)=\muu(I^\AAA)\muu(I^\AAAc) = 2^{-j^\AAA \alpha} 2^{-j^\AAAc \beta}$ for $\alpha,\beta \in [H_{1,\min},H_{1,\max}]$. Recalling \eqref{decomp-nu}, one has $\nue(I) =   \mu_{p_1}(\IA)\mu_{p_2}(\IAc) = 2 ^{-j^\AAA \alpha} 2^{-j^\AAAc\beta'}$ where $\beta'=\G(\beta) \in [H_{2,\min},H_{2,\max}]$ by Lemma \ref{lem-trivial}.

%
%
%
%
%

Writing $\muu(I)=2^{-j H_1}$, then $\nue(I) = 2^{-jH'_2}$, where
\begin{equation}
\label{eq-syst}
\begin{cases}
H'_1= \frac{j^\AAA }{j} \alpha +\frac{j^\AAAc }{j} \beta \\
H'_2=\frac{j^\AAA }{j}  \alpha +\frac{j^\AAAc }{j} \G(\beta)  = H'_1 +\frac{j^\AAA }{j} (\G(\beta) -\beta).
\end{cases}
\end{equation}
Recall that by Lemma \ref{lem-maj-cardinal}, with probability one,  $j^\AAA\sim \eta j$ and $j^\AAAc\sim (1-\eta)j$,   when $j$ tends to $+\infty$. So it is enough to consider the pairs $(H'_1,H'_2)$ such that   $(H'_1,H'_2)=(\eta\alpha+(1-\eta)\beta,\eta\alpha+(1-\eta)\G(\beta))= \mathcal{M}(\alpha,\beta)$.

Let us  write 
\begin{align*}
[a_1,b_1]& =  \{H': (H_1,H')=\mathcal{M}(\alpha,\beta) \mbox{ for some } (\alpha,\beta)\in [H_{1,\min},H_{1,\max}]^2\} \\
[a_2,b_2] & = \{H': (H_1',H')=\mathcal{M}(\alpha,\beta)  \mbox{ for some } (\alpha,\beta)\in [H_{1,\min},H_{1,\max}]^2 \mbox{ and }H'_1\geq H_1\}   \\
[a_3,b_3]& =  \{H': (H_1,H') \mbox{ belongs to the support of } D_{\muu,\nue} \}.
\end{align*}

First,  assume that a pair ($\alpha,\beta)\in (H_{1,\min}, H_{1,\max})$ satisfies  $H_1=\eta\alpha+(1-\eta)\beta$, call $H'=\eta\alpha+(1-\eta)\G(\beta)$. Then, let $x$  and $y$ be such that $\dim(\muu,x)=\alpha$ and    $\dim(\muu,y)=\beta$ - such points with limit local dimensions exist by Proposition \ref{fm}. Consider the real number $z\in \zu$ such that for every integer $j$, $I_j^\mathcal{A} (z)=I_{j^\mathcal{A}} (x)$ and  $I_j^{\mathcal{A}^c} (z)=I_{j^{\mathcal{A}^c}}(y)$. One sees that  $\dim(\muu,z)=\eta\alpha+(1-\eta)\beta =H_1$ and $\dim(\nue,z)=\eta\alpha+(1-\eta)\G(\beta )=H'$, with probability one (this happens simultaneously for all such points $z$, since the randomness comes only from the set $\mathcal{A}$).
 Hence,   $H'\in [a_3,b_3]$ and so  $[a_1,b_1]\subset [a_3,b_3]$.

\sk
 
Second, 
remark   that by item (i) above, $ H'_1 = \eta \alpha + (1- \eta)\beta \geq H_1$,  and by item (ii)  the equality $ H_1 = \eta \alpha + (1- \eta)\beta $ holds infinitely many times. This last point implies in particular that $b_1=b_3$.

\sk

Assume that $a_3<a_1$. This means that $a_3=a_2<a_1$, and so there must be $H'_1>H_1$ such that $(H'_1,a_3)= \mathcal{M}(\alpha,\beta)$ for some pair $(\alpha,\beta)$. This is not possible, since the choice of $p_1,p_2$ imposes that  $\G$ is an increasing mapping with slope $>1$.

As a conclusion,  $[a_3,b_3] =  \mathcal{M} ([H_{1,\min},H_{1,\max}]^2)\cap \{(H_1,H):H\in \R^+\}$, hence the result.
\end{proof}

One can push the description  a bit further and describe the parallelogram $\Pzeroeta$.
 We just proved that  it is enough to consider the pairs $(\alpha,\beta)$ for which  $ H_1 = \eta \alpha + (1- \eta)\beta $, and  to consider the system $(H_1,H_2)=\mathcal{M}(\alpha,\beta)$, i.e. 
$$
 (S) \ \ 
\begin{cases}
 H_1= \eta \alpha + (1- \eta)\beta \\
 H_2= \eta \alpha + (1- \eta)\G(\beta)  =  H_1+(1-\eta) ( \G(\beta) -\beta).
\end{cases}
$$

Given the preceding remarks, $H_1$ being fixed,  obtaining the lower and upper bound for $H_2$ amounts to minimizing  and maximizing $\beta$, respectively. There are four situations according to the value of $H_1$:
\begin{enumerate}
\sk\item
$H_{1,\min}\leq H_1 \leq \eta H_{1,\max}+(1-\eta )H_{1,\min}$: the largest possible value for $\alpha$ is reached when $\beta=H_{1,\min}$, so the smallest possible value of $H_2$ is $H_1+(1-\eta) ( \G(H_{1,\min}) -H_{1,\min})$. The mapping $H_1\mapsto H_1+(1-\eta) ( \G(H_{1,\min}) -H_{1,\min})$  is the straight line of slope 1 passing through the point $(H_{1,\min}, \eta H_{1,\min}+(1-\eta)H_{2,\min})$.

\sk\item
$\eta H_{1,\max}+(1-\eta )H_{1,\min} \leq H_1    \leq H_{1,\max}$: the largest possible value for $\alpha$ is $H_{1,\max}$, and the corresponding $\beta$ in (S) equals  $\beta= (H_1-\eta H_{1,\max})/(1-\eta)$. Hence the smallest possible value of $H_2$ is $H_1+(1-\eta) ( \G( (H_1-\eta H_{1,\max})/(1-\eta)) - (H_1-\eta H_{1,\max})/(1-\eta))=\coefg (  H_1-H_{1,\min}) +\eta(H_{2,\max}-H_{2,\min})+H_{2,\min} $. One checks that when $H_1=H_{1,\max}$, this value equals $\eta H_{1,\max}+(1-\eta)H_{2,\max}$, so the corresponding mapping   $H_1\mapsto \coefg (  H_1-H_{1,\min}) +\eta(H_{2,\max}-H_{2,\min})+H_{2,\min}$  is the straight line of slope $\coefg$ passing through the point $(H_{1,\max}, \eta H_{1,\max}+(1-\eta)H_{2,\max})$.

\sk\item
$H_{1,\min}\leq H_1 \leq \eta H_{1,\min}+(1-\eta )H_{1,\max}$: the smallest possible value for $\alpha$ is $H_{1,\min}$, so the largest possible value of $\beta$ is $(H_1-\eta H_{1,\min}/(1-\eta)$ and that of $H_2$ is $H_1+(1-\eta) ( \G(H_1-\eta H_{1,\min}/(1-\eta)) -H_1-\eta H_{1,\min}/(1-\eta))  $, which is the straight line  of  slope $\coefg$ passing through the point $(H_{1,\min}, \eta H_{1,\min}+(1-\eta)H_{2,\min})$.

\sk\item
$\eta H_{1,\min}+(1-\eta )H_{1,\max} \leq H_1    \leq H_{1,\max}$: the largest possible value for $\beta$ is $H_{1,\max}$, so the largest possible value for $H_ 2$ is $H_1+(1-\eta) ( \G( H_{1,\max}) -  H_{1,\max})$, which is   the straight line of slope $1$ passing through the point  $(H_{1,\max}, \eta H_{1,\max}+(1-\eta)H_{2,\max})$.

 \end{enumerate}

The four cases give  the four corresponding straight lines constituting the frontier of the parallelogram $\mathcal{M} ([H_{1,\min},H_{1,\max}]^2)$.

\mk

One can check that when $0<p_2<p_1<1/2$, then $\coefg<1$ and the parallelogram is still generated by the four straight lines passing through $(H_{1,\min}, \eta H_{1,\min}+(1-\eta)H_{2,\min})$ or $(H_{1,\max}, \eta H_{1,\max}+(1-\eta)H_{2,\max})$ and with slope $1$ or $\coefg$. 


Now that   the support of $D_{\muu,\nue}$ is known, we compute its value at every possible  $(H_1,H_2)$.
\begin{lemma}
\label{lem-spec-correl}
Let $(H_1,H_2)= \mathcal{M} (\alpha,\beta)$ with $(\alpha,\beta)\in [H_{1,\min},H_{1,\max}]^2$. Then $D_{\muu,\nue}(H_1,H_2) =  \eta D_{\muu}(\alpha)+(1-\eta)D_{\muu}(\beta)$.
\end{lemma}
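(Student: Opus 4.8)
The plan is to establish the two matching inequalities for $\dim \underline E_{\muu,\nue}(H_1,H_2)$, the support having already been pinned down in Lemma \ref{lemm-parall}. Everything rests on the decomposition \eqref{decomp-nu}: for $I\in\mathcal{D}_j$, writing $\muu(\IA)=2^{-\jA a}$ and $\muu(\IAc)=2^{-\jAc b}$ with $a,b\in[H_{1,\min},H_{1,\max}]$, Lemma \ref{lem-trivial} gives $\mud(\IAc)=2^{-\jAc \G(b)}$, whence the exponent of $\muu(I)$ is $\frac{\jA}{j}a+\frac{\jAc}{j}b$ and that of $\nue(I)$ is $\frac{\jA}{j}a+\frac{\jAc}{j}\G(b)$. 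Since $\jA\sim\eta j$ and $\jAc\sim(1-\eta)j$ almost surely by Lemma \ref{lem-maj-cardinal}, these exponents are $\eta a+(1-\eta)b$ and $\eta a+(1-\eta)\G(b)$ up to $o(1)$, and the prescribed pair $(H_1,H_2)=\mathcal{M}(\alpha,\beta)$ corresponds precisely to $(a,b)=(\alpha,\beta)$.

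For the lower bound I would apply Proposition \ref{prop_majdimE} with $\mathcal{B}=\AAA$ (so $\weta=\eta$), pinning $\muu$ along $\AAA$ to the single exponent $\alpha$ and $\mud$ along $\AAAc$ to the single exponent $\G(\beta)$. This produces a set $F'=E^{p_1,\AAA}_{\alpha,\alpha}\cap E^{p_2,\AAAc}_{\G(\beta),\G(\beta)}$ of dimension $\eta D_{\muu}(\alpha)+(1-\eta)D_{\mud}(\G(\beta))$, and Lemma \ref{lem-trivial} identifies $D_{\mud}(\G(\beta))=D_{\muu}(\beta)$. On $F'$ the masses along $\AAA$ and $\AAAc$ converge, so the computation above shows $\dim(\muu,x)=H_1$ and $\dim(\nue,x)=H_2$ as genuine limits for every $x\in F'$ (the neighbours $I^\pm$ are controlled because Proposition \ref{prop_majdimE} already works with $I^{n,\mathcal{B}}_j(x)$). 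Hence $F'\subseteq \underline E_{\muu,\nue}(H_1,H_2)$, giving $\dim \underline E_{\muu,\nue}(H_1,H_2)\geq \eta D_{\muu}(\alpha)+(1-\eta)D_{\muu}(\beta)$.

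The upper bound is the delicate point, and the main obstacle is that the $\liminf$ defining $\dimu(\muu,x)$ and the one defining $\dimu(\nue,x)$ need not be attained at the same generations: a point may realise the minimal $\muu$-exponent $H_1$ at some scales and the minimal $\nue$-exponent $H_2$ at entirely different ones. I would therefore cover $\underline E_{\muu,\nue}(H_1,H_2)$ using a single family of scales. For $x$ in the level set and $\ep>0$, eventually $\muu(\wI_j(x))\leq 2^{-j(H_1-\ep)}$ and $\nue(\wI_j(x))\leq 2^{-j(H_2-\ep)}$, so $(a_j,b_j)$ lies eventually in the cone $\{\eta a+(1-\eta)b\geq H_1-\ep\}\cap\{\eta a+(1-\eta)\G(b)\geq H_2-\ep\}$ with vertex $(\alpha,\beta)$; and $\dimu(\muu,x)=H_1$ forces infinitely many scales with $\eta a_j+(1-\eta)b_j\leq H_1+\ep$, i.e. $(a_j,b_j)$ near the edge $\{\eta a+(1-\eta)b=H_1\}$. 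Covering $x$ at those scales and counting via the large deviation estimate \eqref{upperbound-1} (applied along $\AAA$ and $\AAAc$ separately, exactly as in the proof of Proposition \ref{prop_majdimE}) bounds the dimension by $\sup\{\eta D_{\muu}(a)+(1-\eta)D_{\muu}(b)\}$ over the part of that edge inside the cone.

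It remains to see that this constrained supremum equals $\eta D_{\muu}(\alpha)+(1-\eta)D_{\muu}(\beta)$, and this is exactly where the same-side hypothesis ($\coefg>0$, so $\G$ increasing) is used. When $\coefg>1$ the constraint $\eta a+(1-\eta)\G(b)\geq H_2-\ep$ confines the edge to $\{a\leq\alpha,\ b\geq\beta\}$; there, differentiating $f(a)=\eta D_{\muu}(a)+(1-\eta)D_{\muu}(b)$ along $\eta a+(1-\eta)b=H_1$ gives $f'(a)=\eta\big(D_{\muu}'(a)-D_{\muu}'(b)\big)$, which keeps a constant sign by concavity of $D_{\muu}$ as soon as $\alpha\leq\beta$ (then $a\leq\alpha\leq\beta\leq b$), so the maximum sits at the vertex $a=\alpha$. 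If $\alpha>\beta$ one instead covers along the edge $\{\eta a+(1-\eta)\G(b)=H_2\}$, and the symmetric computation again places the maximum at $(\alpha,\beta)$; the cases $0<p_2<p_1<1/2$ and $1/2<p_1,p_2<1$ differ only in whether $\coefg$ exceeds $1$ and are treated identically. This yields the matching upper bound. The conceptual heart is thus that, unlike the opposite-side regime of Theorem \ref{mainth2} where the two measures are anti-correlated and single-edge covering over-counts, here concavity of $D_{\muu}$ combined with $\coefg>0$ makes the single-edge covering optimal.
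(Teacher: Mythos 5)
Your lower bound and your upper bound in the case $\alpha\le\beta$ reproduce the paper's argument: the paper likewise covers at the scales where the liminf of the $\muu$-exponent is attained, uses the constraint coming from $\dimu(\nue,x)\ge H_2-\ep$ to restrict the admissible pairs on the edge $\{\eta a+(1-\eta)b=H_1\}$ to $a\le\alpha$ (equivalently $b\ge\beta$, since $\coefg>1$), and then invokes concavity of $D_{\muu}$ to place the constrained maximum of $a\mapsto \eta D_{\muu}(a)+(1-\eta)D_{\muu}\bigl((H_1-\eta a)/(1-\eta)\bigr)$ at $a=\alpha$. As you correctly observe, that monotonicity argument needs $a\le\alpha\le H_1\le b$, i.e. $\alpha\le\beta$; up to this point your proposal and the paper coincide.

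The gap is in the case $\alpha>\beta$, and your proposed fix does not close it. On the second edge $\{\eta a+(1-\eta)\G(b)=H_2\}$ the cone confines you to $b\le\beta$, and writing $a(b)=(H_2-(1-\eta)\G(b))/\eta$ one gets $\frac{d}{db}\bigl[\eta D_{\muu}(a(b))+(1-\eta)D_{\muu}(b)\bigr]=(1-\eta)\bigl(D_{\muu}'(b)-\coefg\, D_{\muu}'(a(b))\bigr)$, which at $b=\beta$ equals $(1-\eta)\bigl(D_{\muu}'(\beta)-\coefg\, D_{\muu}'(\alpha)\bigr)$. This is \emph{not} the same expression as on the first edge and it does not have the favourable sign in general: whenever $D_{\muu}'(\alpha)<D_{\muu}'(\beta)<\coefg\, D_{\muu}'(\alpha)$ (a nonempty open set of pairs with $\alpha>\beta$, since $\coefg>1$; in the Legendre variables of Section \ref{sec-bi-form-simple} it is exactly the quadrant $q_1>0$, $q_2>0$) the derivative at $\beta$ is strictly negative, so the objective increases as $b$ decreases below $\beta$ and the constrained supremum strictly exceeds $\eta D_{\muu}(\alpha)+(1-\eta)D_{\muu}(\beta)$. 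The first edge fails there too, since $g'(\alpha)=\eta\bigl(D_{\muu}'(\alpha)-D_{\muu}'(\beta)\bigr)<0$ as soon as $\alpha>\beta$, so the maximum over $a\le\alpha$ sits at $a=H_1$ and equals $D_{\muu}(H_1)>\eta D_{\muu}(\alpha)+(1-\eta)D_{\muu}(\beta)$. Moreover this is not a reparation one can make by choosing the edge more cleverly: Proposition \ref{prop_majdimE}, applied at the maximizing point of either constrained edge, shows that the sets you are actually covering, namely $\{\dimu(\muu,\cdot)=H_1,\ \dimu(\nue,\cdot)\ge H_2\}$ and $\{\dimu(\muu,\cdot)\ge H_1,\ \dimu(\nue,\cdot)= H_2\}$, genuinely have dimension strictly larger than the target value in that region, so no single-scale, single-edge covering can yield the claimed upper bound. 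Any correct argument there must use that the two liminfs are attained (at possibly different scales) by the \emph{same} point, in the spirit of Proposition \ref{prop-expo-simultane} of Section \ref{sec-case2} --- which is precisely the difficulty you identify at the outset but do not actually resolve. (Note that the paper's written proof performs only the first covering and concludes with the inequality $\max_{\alpha'\le\alpha}(\cdots)\le \eta D_{\muu}(\alpha)+(1-\eta)D_{\muu}(\beta)$, which likewise presupposes $\alpha\le\beta$; so the case $\alpha>\beta$ is the genuinely delicate one and your ``symmetric computation'' cannot be the whole story.)
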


\begin{proof}
 
Let us start with the lower bound.
By \eqref{eq-maj3} applied with $\mathcal{B}=\mathcal{A}$, $p_1=p_2$, $\alpha_1=\beta_1=\alpha$ and $\alpha_2=\beta_2=\beta$, one has 
$ \dim E^{p_1,\AAA}_{\alpha,\alpha} \cap E^{p_1,\AAAc}_{\beta,\beta}    = \eta D_{\mu_{p_1}}(\alpha)  + (1-\eta )  D_{\mu_{p_1}}(\beta) $. Next observe that when $x\in E^{p_1,\AAA}_{\alpha,\alpha} \cap E^{p_1,\AAAc}_{\beta,\beta}  $,  by Lemma \ref{lem-trivial},
\begin{align*}
\lim_{j\to+\infty} \frac {\log(\muu(B(x,2^{-j} )))}{\log 2^{-j}}& = \eta\alpha+(1-\eta)\beta = H_1,\\
\lim_{j\to+\infty} \frac{ \log(\nue(B(x,2^{-j})))}{\log 2^{-j}}& = \eta\alpha+(1-\eta)\G(\beta)=H_2,
\end{align*}
 hence $x\in  \underline{E}_{\muu,\nue}(H_1,H_2)$. So $D_{\muu,\nue}(H_1,H_2)  \geq \dim_H E^{p_1,\AAA}_{\alpha,\alpha} \cap E^{p_1,\AAAc}_{\beta,\beta} = \eta D_{\muu}(\alpha)+(1-\eta)D_{\muu}(\beta)$.

 \mk
 
Let us prove the upper bound for the dimension of $ \underline{E}_{\muu,\nue}(H_1,H_2)$.

For $x\in \zu$, write $\muu(I_j(x))= \muu(I_j^{\AAA}(x))\muu(I_j^{\AAAc}(x)) = 2^{-j^\AAA \alpha_{j,x}} 2^{-j^\AAAc \beta_{j,x} }$, so that $\nue(I_j(x))= \muu(I_j^{\AAA}(x))\mud(I_j^{\AAAc}(x)) = 2^{-j^\AAA \alpha_{j,x} } 2^{-j^\AAAc \G(\beta_{j,x} )}$. 

Similarly, write $\muu(I_j^\pm(x))= \muu(I_j^{\pm,\AAA}(x))\muu(I_j^{\pm,\AAAc}(x)) = 2^{-j^\AAA \alpha_{j,x}^\pm} 2^{-j^\AAAc \beta_{j,x}^\pm}$, and  $\nue(I_j^\pm(x))= \muu(I_j^{\pm,\AAA}(x))\mud(I_j^{\pm,\AAAc}(x)) = 2^{-j^\AAA \alpha_{j,x}^\pm} 2^{-j^\AAAc \G(\beta_{j,x}^\pm)}$.
 
Consider $x\in \underline{E}_{\muu,\nue}(H_1,H_2)$ and $\ep>0$. For any  $j$ large enough, one must have for every $j$ that $\muu(F_j(x))\leq 2^{-j(H_1-\ep)}$  and $\nue(F_j(x))\leq 2^{-j(H_2-\ep)}$, for $F\in \{I,I^+,I^-\}$. Hence, recalling  that $j^\AAA\sim \eta j$ when $j\to+\infty$, one must have
$$ H_1-2\ep \leq \eta \alpha_{j,x} +(1-\eta)\beta_{j,x} \  \mbox{ and } \ H_2-2\ep \leq \eta \alpha_{j,x}+(1-\eta)\G(\beta_{j,x}),$$
and similarly for $\alpha_{j,x}^\pm  $ and $\beta^\pm_{j,x}$.  

%
%
%

Also, for an infinite number of integers $j_n$, one must have 
$$ H_1-2\ep \leq \eta \alpha_{j_n,x}+(1-\eta)\beta_{j_n,x}\leq H_1+2\ep$$
or the same inequalities for $\alpha_{j_n,x}^\pm  $ and $\beta^\pm_{j_n,x} $. Assume that this holds for $\alpha_{j_n,x}   $ and $\beta _{j_n,x}$. 

Since $(H_1,H_2)=\mathcal{M}(\alpha,\beta)$, one gets
\begin{align*}
-2\ep  & \leq \eta (\alpha_{j_n,x}-\alpha)+(1-\eta)(\beta_{j_n,x}-\beta)\leq  2\ep\\
 \mbox{ and } \hspace {3mm} -2\ep&  \leq \eta (\alpha_{j_n,x} -\alpha)+(1-\eta)(\G(\beta_{j_n,x})-\G(\beta)).
 \end{align*}
The mapping $\G$ being affine with slope $\coefg>1$, one deduces that $\beta_{j_n,x}\geq \beta- \frac{4\ep}{\coefg-1}$. In particular, $\beta_{j_n,x}$ cannot be much smaller than $\beta$, so $\alpha_{j_n,x}$ cannot be much larger than $\alpha$.

\medskip

Let $N\in \N^*$ so large that $1/N\leq \ep$,  and split the interval $[H_{1,\min}-\ep, \alpha+\ep]$ into $N$ intervals $K_1,...,K_N$ of equal length written $\gamma/N$. Write $K_i=[\alpha_{i,N},\alpha_{i+1,N}]$ with $\alpha_{1,N}=H_{1,\min}-\ep$ and $\alpha_{N+1,N}=\alpha+\ep$.
 
 By item (5) of Proposition \eqref{fm}, for every large integer $j$, for every $i\in \{1,...,N\}$, 
 $$\left| \frac{ \log_2  \mathcal{E}_\muu(j^\AAA,K_i) }{ j^\AAA } - 
 \sup_{H\in K_i} 
 D_\muu(H) \right| \leq  \ep /2 .  $$
By continuity of $D_\muu$, $N$ can be chosen so large that $\sup_{H\in K_i}D_{\muu}(H)  \leq D_\muu(\alpha_{i,N})+\ep/2$. This yields in particular that  for large integers $j$, $ \frac{ \log_2  \mathcal{E}_\muu(j^\AAA,K_i) }{ j^\AAA } \leq D_\muu(\alpha_{i,N})+\ep$. 
 
 Call $\beta_{i,N}$ the solution to $\eta\alpha_{i,N}+(1-\eta) \beta_{i,N}=H_1$, and call $\widetilde K_i= [\beta_{i+1,N}-1/N,\beta_{i,N}+1/N]$. The same argument as above yield that  for $N$ and $j$ large,  for every $i\in \{1,...,N\}$, 
 $ \frac{ \log_2  \mathcal{E}_\muu(j^\AAAc, \widetilde K_i) }{ j^\AAAc } \leq D_\muu(\beta_{i,N})+\ep$. 

From what precedes, one deduces that when $x\in \underline{E}_{\muu,\nue}(H_1,H_2)$, for infinitely many integers $j$, $\alpha_{j_n,x}$ (or resp., $\alpha_{j_n,x}^+$, $\alpha_{j_n,x}^-$) belongs to some interval $K_i$ and simultaneously  $\beta_{j_n,x}$ (or resp., $\beta_{j_n,x}^+$, $\beta_{j_n,x}^-$) belongs to the corresponding interval $\widetilde K_i$. This leads us to introduce, as  in Proposition \ref{prop_majdimE},  the set 
$$
 F_j:=\left\{I\in \mathcal{D}_j:  \frac{\log \muu(I^\AAA) }{\log j^\AAA} \in  K_i  \ \mbox{ and } \frac{\log \muu(I^\AAAc) }{\log j^\AAAc} \in \widetilde K_i  \mbox{ for some $i\in\{1,...,N\}$} \right\}.
$$
The same arguments as in Proposition \ref{prop_majdimE} combined with the upper bounds above show that for large integers $j$, 
$$\#F_j \leq  \sum_{i=1}^N 2^{j^\AAA (D_\muu(\alpha_{i,N})+\ep) }2^{j^\AAAc (D_\muu(\beta_{i,N})+\ep) }\leq N 2^{j(\delta +2\ep)}, $$
where $\delta=\max_{i\in\{1,...,N\}} \eta D_\muu(\alpha_{i,N})+(1-\eta) D_\muu(\beta_{i,N})$, and where we used again that $j^\AAA\sim \eta j$ when $j\to+\infty$.
The concavity of $D_\muu$ shows that $\delta\leq \max_{\alpha'\leq \alpha} \eta D_\muu(\alpha' )+(1-\eta) D_\muu((H_1-\eta\alpha')/(1-\eta)) \leq  \eta D_\muu(\alpha )+(1-\eta) D_\muu((H_1-\eta\alpha)/(1-\eta))= \eta D_\muu(\alpha )+(1-\eta) D_\muu(\beta)$.

We proved that every  $x\in  \underline{E}_{\muu,\nue}(H_1,H_2)$ is at distance $2^{-j}$ from an interval belonging to $F_j$, for infinitely many integers $j$. So the intervals $(I\pm 2^{-j})_{I\in F_j}$ form a covering of $ \underline{E}_{\muu,\nue}(H_1,H_2)$. Then, by a classical covering argument, the Hausdorff dimension of $ \underline{E}_{\muu,\nue}(H_1,H_2)$ is bounded above by $\eta D_\muu(\alpha )+(1-\eta) D_\muu(\beta)$, hence the conclusion.
\end{proof}

%
%
%
%
%
%
%

\subsection {Verification of the bivariate \ml formalism}
\label{sec-bi-form-simple}

The aim is here to check that $\tau_{\muu,\nue}^* = D_{\muu,\nue}$ almost surely.

Observe that $(q_1,q_2)\mapsto \tau_{\muu,\nue}(q_1,q_2)$ is $C^\infty$ and concave, as the sum of two $ C^\infty$  concave mappings. A direct computation gives
\begin{align*}
H_1:= \frac{\partial}{\partial q_1}  \tau_{\muu,\nue}(q_1,q_2)& = \eta \frac{\partial}{\partial q_1}   \tau_{\mu_{p_1}}(q_1+q_2) + (1-\eta)   \frac{\partial}{\partial q_1}  \Tau(q_1,q_2) \\
& = \eta \frac{-p_1^{q_1+q_2}\log_2(p_1)-(1-p_1)^{q_1+q_2}\log_2(1-p_1)}{p_1^{q_1+q_2} +(1-p_1)^{q_1+q_2}}\\
& \ \ \ + (1-\eta)   \frac{-p_1^{q_1}p_2^{q_2}\log_2(p_1)- (1-p_1)^{q_1}(1-p_2)^{q_2}\log_2(1-p_1)}{p_1^{q_1}p_2^{q_2}+ (1-p_1)^{q_1}(1-p_2)^{q_2}}.
\end{align*}
and  $H_2:= \frac{\partial}{\partial q_2}  \tau_{\muu,\nue}(q_1,q_2) $ is similarly computed. For every $(q_1,q_2)$, since $\tau_{\muu,\nue}$ is $ C^\infty$  strictly concave, one has $\tau_{\muu,\nue}^* (H_1,H_2) = q_1H_1+q_2H_2-\tau_{\muu,\nue}(q_1,q_2)$.

Let $q_\alpha =q_1+q_2$ and $q_\beta $ be the unique real number such that $\frac{p_1 ^{q_\beta} }{p_1 ^{q_\beta}+(1-p_1) ^{q_\beta}}  = \frac{p_1^{q_1}p_2^{q_2}}{p_1^{q_1}p_2^{q_2}+ (1-p_1)^{q_1}(1-p_2)^{q_2} }$. Call $\alpha=\tau_{\muu}'(q_\alpha) = -p_1 ^{q_\alpha} \log_2(p_1) -(1-p_1) ^{q_\alpha} \log_2(1-p_1) $ and $\beta=\tau_{\muu}'(q_\beta) = -p_1 ^{q_\beta} \log_2(p_1) -(1-p_1) ^{q_\beta} \log_2(1-p_1) $. Combining the previous formulas gives  $(H_1,H_2)= \mathcal{M} (\alpha,\beta)$ and $(\alpha,\beta)\in [H_{1,\min},H_{1,\max}]^2$. 
 
 By Lemma  \ref{lem-spec-correl}, $D_{\muu,\nue}(H_1,H_2) =  \eta D_{\muu}(\alpha)+(1-\eta)D_{\muu}(\beta)$. It remains us to check that  this formula coincides with $\tau_{\muu,\nue}^* (H_1,H_2) $. Using the above formulas and after computations, one sees that
 \begin{align*}
   D_{\muu}(\alpha)  & =  q_\alpha \alpha-\tau_{\muu}(q_\alpha)= q_\alpha(\alpha+\log_2(p_1) )   -(q_1+q_2)\log_2 p_1 -\tau_\muu(q_1+q_2)  \\
      D_{\muu}(\beta)  & =  q_\beta \beta-\tau_{\muu}(q_\beta)= q_\beta(\beta+\log_2(p_1) )   -q_1\log_2 p_1  -q_2\log_2 p_2 -\Taue(q_1,q_2)  .
   \end{align*}
So, remembering   \eqref{defTe},  to get the equality $\eta D_{\muu}(\alpha)  +(1-\eta)  D_{\muu}(\beta) = q_1H_1+q_2H_2-\tau_{\muu,\nue}(q_1,q_2)$, it is enough to check that 
 \begin{align*}
  &\eta(q_\alpha(\alpha+\log_2(p_1) )   -(q_1+q_2)\log_2 p_1 ) +(1-\eta) (q_\beta(\beta+\log_2(p_1) )   -q_1\log_2 p_1  -q_2\log_2 p_2) \\
  &= q_1H_1+q_2H_2 = \eta\alpha(q_1+q_2)  +(1-\eta)(  \beta(q_1+\coefg q_2) +\coefg q_2\log_2p_1 -q_2\log_2p_2).
     \end{align*}
This equality is verified   using   $q_\alpha=q_1+q_2$ and  the same manipulations as in \eqref{fix-x} and \eqref{equa-q} to get that $q_1+\coefg q_2=q_\beta$, and  the result follows.

\section {The case where $p_1 <1/2 < p_2$ }
\label{sec-case2}

The results for $p_2 <1/2 < p_1$  are similar and obtained by changing $\eta$ in $1-\eta$.

\subsection {Computation of the bivariate $L^q$-spectrum}

We prove that  for every $(q_1,q_2)$, $
 \tau_{\mu_{p_1},\nue}(q_1,q_2) =\min(\Taue(q_1,q_2)  ,   \Tauet(q_1,q_2) )$.
 
\begin{proof} 

Observe that using the fact that  $p_1<1-p_1$:
\begin{itemize}
\item
 for every $w\in \Sigma_{j-1}$, $\muu(I_{w0})$  and $\muu(I_{w1})$ (resp. $\nue(I_{w0})$  and $\nue(I_{w1})$) are of the same order of magnitude. So, in order to compute $\tau_{\muu,\nue}$, it is enough to consider in the sums defining $\tau_{\muu,\nue,j}$  only those intervals whose dyadic coding ends with a 1.
 \item
any word $w\in \Sigma_j\setminus \{1^j\} $ ending with a 1 can be written $w=w'01^\ell$, where $\ell \geq 1$, $w'\in \Sigma_{j-\ell-1}$ and $1^{\ell}$ is the word of $\ell$ letters all equal to one. One has  $ I_{w}+ 2^{-j} = I_{w'10^\ell}$, and    $\muu(I_{w})=\mu(I_{w'})p_1(1-p_1)^\ell \geq \mu(I_{w'})(1-p_1)p_1^\ell =\muu( I_{w}+ 2^{-j} )  $. In particular, $\muu(3I_w) \sim \muu(I_w)$.
 
\item
for $w\in \Sigma_{j-1}$,   the  mass $\nue(3 I_{w1})=\nue(I_{w1}^-)+\nue(I_{w1})+\nue(I_{w1}^+) $ is equivalent to $\nue(I_{{w1}})   + \nue(I_{{w1}}+2^{-j})$, so finally the product $\muu(3I_{w1})^{q_1}\nue(3 I_{w1})^{q_2} $ is equivalent (up to uniform constants depending on $q_2$, $p_1$ and $p_2$) to  $\muu(I_{w1})^{q_1}(\nue(I_{{w1}})^{q_2}  + \nue(I_{{w1}}+2^{-j})^{q_2}) $.

\end{itemize}
%

%
%
%
%
%

From all this one deduces that 
\begin{align*}
 \sum_{w\in \Sigma_j} \muu(3I_w)^{q_1}\nue(3 I_w)^{q_2} & \sim   \sum_{w\in\Sigma_{j-1}}  \muu(I_{w1})^{q_1}\nue(I_{w1})^{q_2} +\sum_{w\in\Sigma_{j-1}}  \muu(I_{w1})^{q_1} \nue(I_{w1}+  2^{-j})^{q_2} .  \end{align*}
       
  The first sum is equivalent to $\sum_{w\in\Sigma_{j-1}}  \muu(I_{w})^{q_1}\nue(I_{w})^{q_2} $, which was treated in Lemma \ref{lem-estimtau1} and is equivalent to $2^{-j\Taue(q_1,q_2)}$.
  
  For the second sum, denoted by $S(j,q_1,q_2)$, observe that  $\Sigma_{j-1}$ can be decomposed as $\Sigma_{j-1}= \{1^{j-1} \} \cup     \bigcup_{\ell =0}^{j-2} \bigcup_{w\in \Sigma_\ell} w01^{j-2-\ell} $. This  remark yields that  (the word $1^{j}$    yields  a zero term since $(I_{1^j}+2^{-j})\cap [0,1)=\emptyset$)
  \begin{align*}
S(j,q_1,q_2) & =  \sum_{\ell=0}^{j-2} \sum_{w\in \Sigma_{\ell} }   \muu(I_{w01^{j-2-\ell}})^{q_1} \nue(I_{w01^{j-2-\ell}}+ 2^{-j})^{q_2} \\
 & =  \sum_{\ell=0}^{j-2} \sum_{w\in \Sigma_{\ell} }   \muu(I_{w01^{j-2-\ell}})^{q_1} \nue(I_{w10^{j-2-\ell}})^{q_2} .
 \end{align*}
 For $w\in \Sigma_{\ell}$, $   \muu(I_{w01^{j-2-\ell}})  = \muu(I_w) p_1(1-p_1)^{j-2-\ell}$, and using the notations of Lemma \ref{lem-maj-cardinal},
 $$   \nue(I_{w10^{j-2-\ell}})  = \nue(I_w) (1-p_1)^{{\bf 1\!\! \!1}_{\ell\in \AAA} } (1-p_2)^{{\bf 1\!\! \!1}_{\ell\in \AAAc}}   p_1^{\#\AAA_{\ell+2,j-1} }  p_2^{j-2-\ell-\#\AAA_{\ell+2,j-1}}.$$
The terms $p_1$ and  $(1-p_1)^{{\bf 1\!\! \!1}_{\ell\in \AAA} } (1-p_2)^{{\bf 1\!\! \!1}_{\ell\in \AAAc} }  $ are bounded by above and below by uniform constants, so
  \begin{align*}
S(j,q_1,q_2) & \sim  \sum_{\ell=0}^{j-2} (1-p_1)^{q_1(j-\ell-2)} p_1^{q_2\#\AAA_{\ell+2,j-1} }  p_2^{q_2(j-\ell-2-\#\AAA_{\ell+2,j-1})} \sum_{w\in \Sigma_{\ell} }   \ \muu(I_w)^{q_1} \nue(I_{w})^{q_2} .
 \end{align*}

 By \eqref{taux-conv-tau}, for some constant $C>0$ one has for every $j\geq 1$
 $$2^{ -C ( (\ell-1)  {\log (\ell-1) }) ^{1/2}}  \leq  \frac{\sum_{w\in \Sigma_{\ell-1} }   \ \muu(I_w)^{q_1} \nue(I_{w})^{q_2}  } {2^{-(\ell-1) \Taue(q_1,q_2)}} \leq 2^{ C ( (\ell-1)  {\log (\ell-1) }) ^{1/2}},$$
so, replacing $\ell-1$ by $\ell$ in the above formula (which only introduces constants), one gets
  \begin{align}
  \label{sum-S2}
S(j,q_1,q_2) & \sim  \sum_{\ell=0}^{j-2}  2^{-\ell \Taue(q_1,q_2) + O(\sqrt{\ell \log \ell}) } (1-p_1)^{q_1(j-\ell-2)} p_1^{q_2\#\AAA_{\ell+2,j-1} }  p_2^{q_2(j-\ell-2-\#\AAA_{\ell+2,j-1})}.
 \end{align}

Next, by Lemma  \ref{lem-maj-cardinal}, with probability one there exists    an integer $J_0$ such that for every $J \geq J_0$, for every $j\geq  J^{\ep_J} $ where $\ep_J= (\log(J))^{-1/2}$,  \eqref{majA_j} holds true.  In particular, fixing $\ep>0$, if $ \ep^{-2} \leq \ell\leq j- j^\ep$, then $ \ell+\ell^{1/\sqrt{\log\ell}}  \leq \ell+\ell^{\ep} \leq j$ and  \eqref{majA_j}  (applied with $\ell+2$  instead of  $J$ and $j-1$ instead of $J+j-1$) gives for $j$ large that
 \begin{equation}
\label{majA_j2}
   | \#\mathcal{A}_{\ell+2,j-1}   - \eta  (j-\ell+1)  | \leq  \sqrt{ (j-\ell-2) \log (j-\ell-2)/\ep_{\ell+2}}  \leq (j-\ell)^{1/2+\ep}.
\end{equation}
Let us split the sum in \eqref{sum-S2} into three  sums $\Sigma_1(j)$, $\Sigma_2(j)$ and $\Sigma_3(j)$ according to the ranges of indices $\ell\in \{0,..., \lfloor \ep^{-2}\rfloor+1 \}$, $\{ \lfloor \ep^{-2}\rfloor+2,...,j-\lfloor j^\ep\rfloor \}$ and  $\{ j-\lfloor j^\ep\rfloor +1,...,j-2\}$.
For the first one, the terms $ 2^{-\ell \Taue(q_1,q_2) + O(\sqrt{\ell \log \ell}) } $ can be viewed as constants, and
 \begin{align*}
\Sigma_1(j) & \sim  \sum_{\ell=1}^{\lfloor \ep^{-2}\rfloor+1}  (1-p_1)^{q_1(j-2-\ell)}  p_1^{q_2\#\AAA_{\ell+2,j-1} }  p_2^{q_2(j-1-\ell-\#\AAA_{\ell+2,j-1})} .
 \end{align*}
The ranges of $\ell$ being finite, by the law of large numbers, for every $j$ large enough, for every $\ell \in \{1,..., \lfloor \ep^{-2}\rfloor+1 \}$, $ |\#\AAA_{\ell+2,j-1} - \eta (j-1)| \leq   j  \ep_j$ for some sequence $\ep_j$ that tends to zero when $j$ tends to infinity. Hence, for some sequence $\ep'_j$ tending to zero, and recalling \eqref{defTet}, one sees that 
 \begin{align*}
\Sigma_1(j) & \sim  \   (1-p_1)^{q_1j}  p_1^{q_2\eta j }  p_2^{q_2j(1-\eta) }2^{j\ep'_j}  = 2^{-j (\Tauet(q_1,q_2)-\ep'_j)}. 
 \end{align*}

\begin{center}
\begin{figure}
 \includegraphics[width=7.8cm,height=7cm]{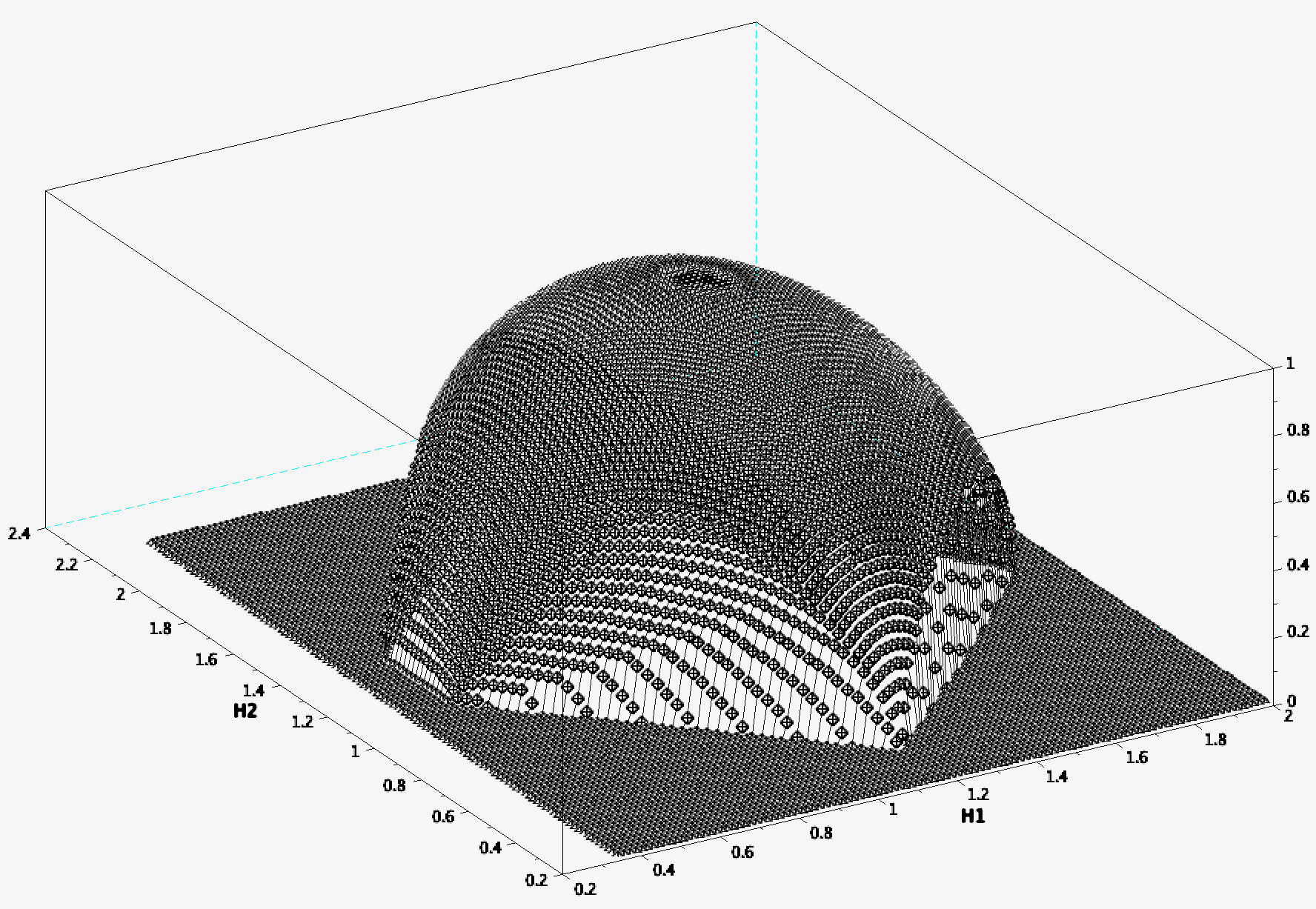} \  
 \includegraphics[width=7.8cm,height=7cm]{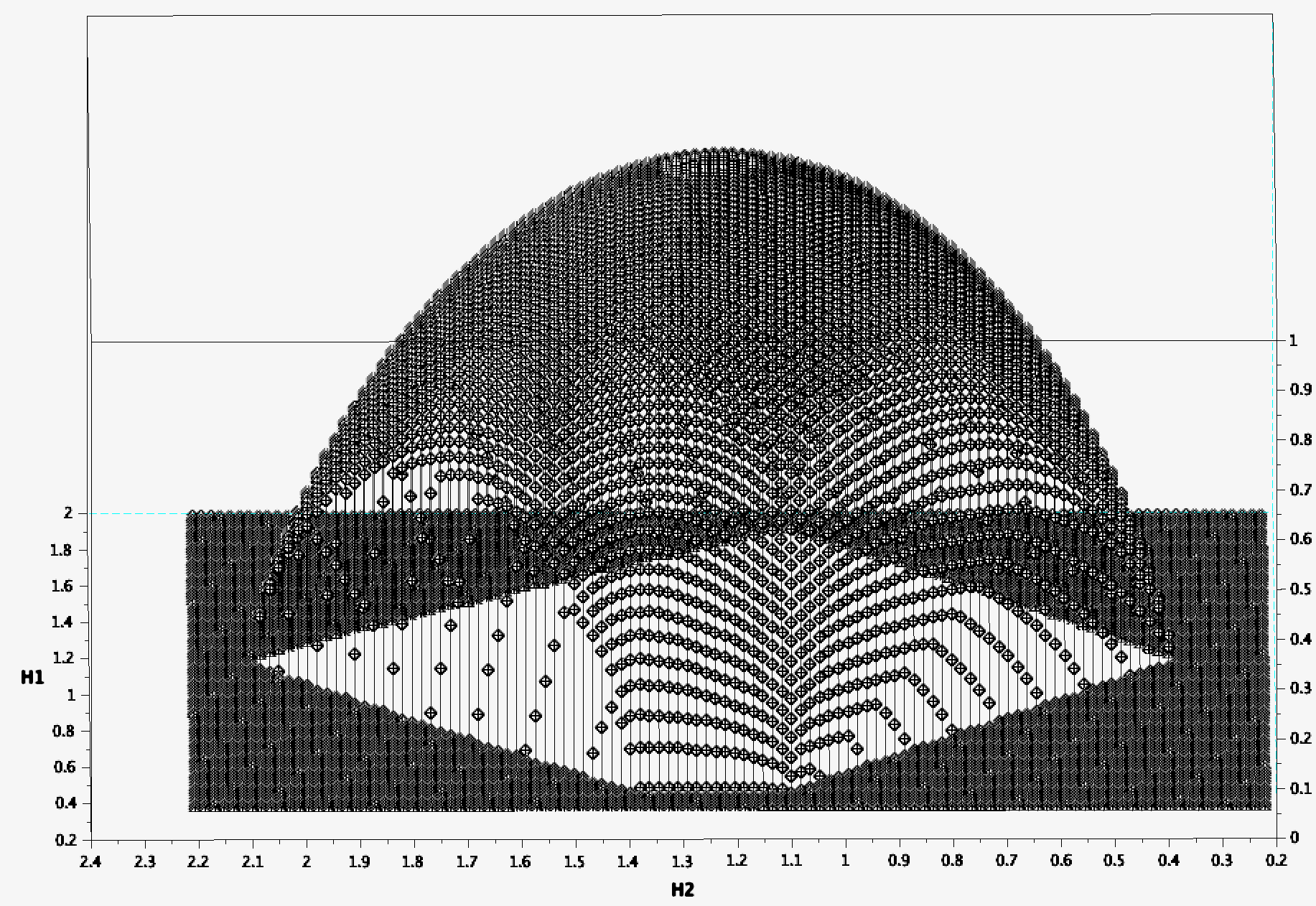}   
  \caption{Two views of the Legendre spectrum when  $0<p_1=0.2 <1/2<p_2=0.8$ with $\eta=0.5$.}
\label{fig-taueta2*}
 \end{figure}
 \end{center}
%

For the second sum, one applies \eqref{majA_j2} to get for some   $\ep''_j$ tending to zero that
\begin{align*}
\Sigma_2(j) & \sim  \sum_{\ell= \lfloor\ep^{-2}\rfloor+2}^{j-\lfloor j^\ep\rfloor }  2^{-\ell \Taue(q_1,q_2) } (1-p_1)^{q_1(j-\ell)} p_1^{q_2\eta(j-\ell) }  p_2^{q_2(1-\eta)(j-\ell) } 2^{ O(\sqrt{\ell \log \ell} +  (j-\ell)^{1/2+\ep}) }  \\
& \sim \sum_{\ell= \lfloor\ep^{-2}\rfloor+2}^{j-\lfloor j^\ep\rfloor }  2^{-\ell \Taue(q_1,q_2)  -(j-\ell) \Tauet(q_1,q_2) +j\ep''_{j} }  \sim 2^{-j \min(\Taue(q_1,q_2)  , \Tauet(q_1,q_2)) +j \ep''_{j}}.
 \end{align*}

Finally, for the third term, we observe that $ |j-1-\ell|\leq j^\ep$, and deduce that for some other sequence $\ep'''_j$ tending to zero (we also replace $(j-1)-(j-1)^\ep$ by $j-j^\ep$ which simplifies the calculation)
 \begin{align*}
 \Sigma_3(j) &  \sim  \sum_{\ell= j-\lfloor j^\ep\rfloor } ^j 2^{-\ell \Taue(q_1,q_2) } (1-p_1)^{q_1(j-\ell)} p_1^{q_2\eta(j-\ell) }  p_2^{q_2(1-\eta)(j-\ell) } 2^{ O(\sqrt{\ell \log \ell} +  (j-\ell)^{1/2+\ep}) }  \\
  & \sim    2^{-j \Taue(q_1,q_2) +j\ep'''_j } .
 \end{align*}
Combining the three previous estimates, one concludes that
 \begin{align*}
S(j,q_1,q_2)  & \sim      2^{-j \min(\Taue(q_1,q_2)  , \Tauet(q_1,q_2)) +j\tilde\ep_{j}},
 \end{align*}
  where $\tilde\ep_{j} \to 0$.
  
 Finally, 
 \begin{align*}
 \sum_{w\in \Sigma_j} \muu(3I_w)^{q_1}\nue(3 I_w)^{q_2}  & \sim      2^{-j \min(\Taue(q_1,q_2)  , \Tauet(q_1,q_2)) +j\tilde\ep_{j}}.
 \end{align*}
 
The result follows. Observe that  a phase transition occurs  at the curve $ \{(q_1,q_2):\Taue(q_1,q_2) = \Tauet(q_1,q_2)\}$.
\end{proof}

The support of $(\Tauet)^*$ is the parallelogram with same corners as in Section  \ref{sec-case1}, however there is a difference between these two figures, due to the relative positions of $p_1$ and $p_2$ and the fact that $\coefg <0$. Indeed, the four straight lines generating the parallelogram have slopes 1 and $\coefg$, which is positive or negative according to whether   $ p_1,p_2 $ are one the same side of $1/2$ or not, respectively, see   Figure \ref{fig-taueta2*}.

%
  
Next, observe that $\frac{\partial}{\partial q_1}\Tauet(q_1,q_2) = \log_2(1-p_1)=H_{1,\min}$ and $\frac{\partial}{\partial q_2}\Tauet(q_1,q_2) = - \eta\log_2(p_1)- (1-\eta) \log_2(p_2)=\eta H_{1,\max}+(1-\eta)H_{2,\min}$, so these partial derivatives are independent of $q_1$ and $q_2$. Also, the point $(H_{1,\min},\eta H_{1,\max}+(1-\eta)H_{2,\min})$ lies outside the support of $ ( \Taue)^*$. From the elementary properties of the Legendre transform we deduce that:
\begin{itemize}
\item
 $\tau_{\muu,\nue}^*$ has support the convex pentagon $\mathcal{P}^\eta_1$ whose 5 corners are   $(H_{1,\min},\eta H_{1,\max}+(1-\eta)H_{2,\min})$ and the four points of the parallelogram described above, see Figure  \ref{fig-taueta2*}.

 \item
  $\tau_{\muu,\nue}^*$ is the concave hull of the union of the singleton $(H_{1,\min},\eta H_{1,\max}+(1-\eta)H_{2,\min},0)$ and the graph of $(\Taue)^*$. In particular, it contains  a cone constituted by segments all starting from $(H_{1,\min  },\eta H_{1,\max}+(1-\eta)H_{2,\min})$ and tangent to the graph of $(\Taue)^*$, see Figures \ref{fig-taueta1*} and \ref{fig-taueta2*}.
\end{itemize}
 
\subsection {Computation of the bivariate \ml spectrum}
\label{sec-spec-decorr}

Let us write $[H_{\eta,\min}:=\eta H_{1,\min}+(1-\eta)H_{2,\min}, H_{\eta,\max}:=\eta H_{1,\max}+(1-\eta)H_{2,\max}]$ for the support of $D_{\nue}$.

Assume that for some $x$, $\dimi(\muu,x)=H_1$ and $\dimi(\nue,x)=H_2$. There   exists a sequence $(\ep_j)$ depending on $x$ and tending to 0 such that  :
\begin{itemize}
\item[(i)]
for every $j$ and $\widetilde I\in \{I,I^+,I^-\}$, $\muu( \widetilde I_j(x)) =2^{-jH'_1}$ where $H'_1\in [H_1-\ep_j, H_{1,\max}+\ep_j]$ and $\nue( \widetilde I_j(x)) =2^{-jH'_2}$ where $H'_2\in [H_2-\ep_j, H_{\eta,\max}+\ep_j]$  (here $H'_1$ and $H'_2$ depend on $j$ and $x$),
\item[(ii)]
 there are infinitely many integers $j$ such that $\muu( \widetilde I_j(x)) \geq 2^{-j(H_1+\ep_j)}$ for some $\widetilde I\in \{I,I^+,I^-\}$ and, possibly for other integers $j$, $\nue( \widetilde I_j(x)) \geq 2^{-j(H_2+\ep_j)}$ for some $\widetilde I\in \{I,I^+,I^-\}$.

\end{itemize}

The same argument as in Lemma  \ref{lemm-parall} gives, writing $H'_1=\eta\alpha+(1-\eta)\beta$ and $H'_2 =\eta\alpha+(1-\eta)\G(\beta)$ and letting $j$ tend to infinity, that one necessarily has   $(\eta\alpha+(1-\eta)\beta,\eta\alpha+(1-\eta)G(\beta)) \in [H_1,H_{1,\max}]\times [H_2,H_{\eta,\max}] $, and  the two following systems 
$$ (S_1) \ \ 
\begin{cases}
H_1 = \eta \alpha + (1- \eta)\beta  \\
H_2 \leq   \eta \alpha + (1- \eta)\G(\beta)  
\end{cases}
 \ \mbox{ and }  \ \  (S_2) \ \ 
\begin{cases}
H_1\leq   \eta \alpha + (1- \eta)\beta  \\
 H_2= \eta \alpha + (1- \eta)\G(\beta)
 \end{cases}
$$
are realized for infinitely many integers $j$ (not necessarily simultaneously).

 Let us first clarify the possible range for the pair of exponents $(H_1,H_2)$,

\begin{lemma}
\label{lemm-penta}
Almost surely, the support of $D_{\muu,\nue}$ is a (deterministic) pentagon $\mathcal{P}^\eta_2$ different from $\mathcal{P}^\eta_1$. 
\end{lemma}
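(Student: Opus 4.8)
The plan is to identify the support of $D_{\muu,\nue}$ as the set of exponent pairs for which both necessary systems $(S_1)$ and $(S_2)$ derived above are feasible, to convert that feasibility into explicit linear inequalities, and then to show every such pair is actually realized by some point.

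First I would record the inclusion "support $\subseteq$ feasible set''. If $x\in\underline E_{\muu,\nue}(H_1,H_2)$, then selecting a subsequence $j_n$ along which the liminf defining $\dimi(\muu,x)$ is attained, writing $\muu(\widetilde I_{j_n}(x))=2^{-j_n(\eta\alpha_{j_n}+(1-\eta)\beta_{j_n})+o(j_n)}$ with $(\alpha_{j_n},\beta_{j_n})\in[H_{1,\min},H_{1,\max}]^2$ (using $j^\AAA\sim\eta j$, $j^\AAAc\sim(1-\eta)j$ almost surely from Lemma~\ref{lem-maj-cardinal}, and Lemma~\ref{lem-trivial} to pass from $\mud$ to $\muu$ through $\G$), and letting $(\alpha_{j_n},\beta_{j_n})\to(\alpha,\beta)$ yields a solution of $(S_1)$; the subsequence attaining the liminf for $\nue$ yields $(S_2)$. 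Set $\phi(\alpha,\beta)=\eta\alpha+(1-\eta)\beta$, $\psi(\alpha,\beta)=\eta\alpha+(1-\eta)\G(\beta)$, so that $\mathcal M(\alpha,\beta)=(\phi,\psi)$ and its range is the parallelogram $R:=\mathcal M([H_{1,\min},H_{1,\max}]^2)$.

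Next I would solve the two one-parameter optimizations. Fixing $H_1$ and eliminating $\alpha$, the quantity $\psi=H_1+(1-\eta)(\G(\beta)-\beta)$ is affine in $\beta$, and since $\coefg<0$ makes $\beta\mapsto\G(\beta)-\beta$ strictly decreasing it is maximized at the smallest admissible $\beta$; this gives a slope-$1$ line $L_1$ through $V_1=(H_{1,\min},\eta H_{1,\min}+(1-\eta)H_{2,\max})$ for small $H_1$, then a slope-$\coefg$ line $L_2$ through $V_2=(\eta H_{1,\max}+(1-\eta)H_{1,\min},\,\eta H_{1,\max}+(1-\eta)H_{2,\max})$. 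Hence $(S_1)$ is feasible iff $(H_1,H_2)$ lies below both $L_1$ and $L_2$. Symmetrically, fixing $H_2$ and maximizing $\phi$ shows $(S_2)$ is feasible iff $(H_1,H_2)$ lies to the left of $L_2$ and of the slope-$1$ line $L_3$ through $V_3=(H_{1,\max},\eta H_{1,\max}+(1-\eta)H_{2,\min})$. Because $\coefg<0$, "below $L_2$'' and "left of $L_2$'' coincide, and "left of $L_3$'' is "above $L_3$''; intersecting with the forced ranges $H_1\geq H_{1,\min}$ and $H_2\geq H_{\eta,\min}$ produces a convex region bounded by exactly five edges, the deterministic pentagon $\mathcal{P}^\eta_2$ with vertices $V_1,V_2,V_3$, $V_4=(\eta H_{1,\min}+(1-\eta)H_{1,\max},H_{\eta,\min})$ and $W=(H_{1,\min},H_{\eta,\min})$ (determinism holding because the randomness enters only through $j^\AAA\sim\eta j$).

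For the reverse inclusion I would construct, for each $(H_1,H_2)$ in the interior of $\mathcal{P}^\eta_2$, a point realizing both exponents. Feasibility of $(S_1)$ and $(S_2)$ gives configurations $(\alpha^{(1)},\beta^{(1)})$ and $(\alpha^{(2)},\beta^{(2)})$; I would prescribe the digit frequencies of $x$ in the $\AAA$- and $\AAAc$-coordinates block by block, holding them at the first configuration on one long block, at the second on a much longer block, and interpolating affinely between consecutive blocks. Since $\mathcal M$ is affine and $R\cap\{\phi\geq H_1,\psi\geq H_2\}$ is convex, the running pair $(\phi,\psi)$ stays in $\{\phi\geq H_1-\ep,\ \psi\geq H_2-\ep\}$ throughout while touching $\phi=H_1$ and $\psi=H_2$ infinitely often, and controlling the neighbours $I^{\pm}_j(x)$ as in Lemma~\ref{lemm-parall} gives $\dimi(\muu,x)=H_1$ and $\dimi(\nue,x)=H_2$. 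This decoupled construction, realizing the two liminfs at genuinely different scales without ever violating the eventual lower bounds, is the main technical obstacle. Finally, to separate the two supports I would compare their fifth vertices: the extra vertex of $\mathcal{P}^\eta_1$ is $U=(H_{1,\min},\eta H_{1,\max}+(1-\eta)H_{2,\min})$, whereas that of $\mathcal{P}^\eta_2$ is $W=(H_{1,\min},\eta H_{1,\min}+(1-\eta)H_{2,\min})$; since $H_{1,\min}<H_{1,\max}$ and $\eta>0$, the point $W$ lies strictly below $U$, so $W\in\mathcal{P}^\eta_2\setminus\mathcal{P}^\eta_1$ and the two pentagons are distinct (indeed $\mathcal{P}^\eta_1\subsetneq\mathcal{P}^\eta_2$ for $0<\eta<1$).
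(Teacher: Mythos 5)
Your proposal is correct and follows essentially the same route as the paper: you derive the same two systems $(S_1)$ and $(S_2)$, carry out the same one-parameter optimizations (using that $\G$ is affine with negative slope $\coefg$), and obtain the same five boundary lines and vertices, hence the same pentagon $\mathcal{P}^\eta_2$ (your $V_3=(H_{1,\max},\eta H_{1,\max}+(1-\eta)H_{2,\min})$ is the correct version of the corner the paper lists with $H_{1,\min}$ in place of $H_{2,\min}$). The only organizational difference is that you sketch the realization of every interior pair by a direct block-by-block point construction inside the lemma, whereas the paper establishes only the containment here and obtains realizability from the measure construction of Propositions \ref{prop-calculdmueta2} and \ref{prop-expo-simultane}; for the support statement a single point suffices, so your convexity argument along the alternating blocks is adequate.
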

\begin{proof} Recall that when $0<p_1<1/2<p_2<1$, $\coefg<0$.

\sk

$\bullet$ Consider first the system $(S_1)$. We write $\beta_{H_1,\alpha}=\frac{H_1-\eta\alpha}{1-\eta}$, so that $H_1 = \eta \alpha + (1- \eta)\beta_{H_1,\alpha}$.

The exponent $H_1$ being fixed, we look for the possible values of $H_2$. One can always take $H_2=H_{\eta,\min}$, which is the smallest  possible  value, so we only have to investigate the largest possible value for $H_2$.

One rewrites the second line of $(S_1)$ as  $H_2 \leq  H_1 + (1- \eta)(\G(\beta_{H_1,\alpha}) -\beta_{H_1,\alpha})$. Hence,     the largest $\alpha$, the smallest $\beta_{H_1,\alpha}$ (since $\coefg<0$), and the largest   $H_2$ in $(S_1)$.
  
 Observe also that by construction $\alpha$ and $\beta$ belong to $[H_{1,\min},H_{1,\max}]$, so 
 \begin{equation}
 \label{contr-alpha}
 \max\big (H_{1,\min}, \frac{H_1-(1-\eta)H_{1,\max}}{\eta} \big) \leq \alpha \leq \min\big(H_{1,\max},\frac{H_1-(1-\eta)H_{1,\min}}{\eta}  \big).
 \end{equation}

The two expressions in the  above minimum coincide when  $H_1=(1-\eta) H_{1,\min}+\eta H_{1,\max}$.

\begin{enumerate}

\sk\item
$H_{1,\min}\leq H_1 \leq (1-\eta )H_{1,\min}+ \eta H_{1,\max}$:  the largest possible value for $\alpha$ is $\frac{H_1-(1-\eta)H_{1,\min}}{\eta} $. The largest possible $H_2$ satisfying $(S_1)$ is thus $f_1(H_1):= \eta \frac{H_1-(1-\eta)H_{1,\min}}{\eta}    + (1-\eta)\G( \beta_{H_1,\frac{H_1-(1-\eta)H_{1,\min}}{\eta} } )$, and it is checked that $f_1$ is the straight line with slope 1 passing through the points $(H_{1,\min}, \eta H_{1,\min}+(1-\eta )H_{2,\max})$ and $( \eta H_{1,\max}+(1-\eta )H_{1,\min}, H_{\eta,\max})$.

\sk\item
$(1-\eta )H_{1,\min}+ \eta H_{1,\max} < H_1\leq H_{1,\max}$:  the largest possible value for $\alpha$ is now constant and equal to $H_{1,\max}$,  the corresponding $\beta$ is $\beta_{H_1,H_{1,\max}}$, and the maximal $H_2$ is $\eta H_{1,\max} +(1-\eta)\G(\beta_{H_1,H_{1,\max}}):=f_2(H_1)$. This yields the straight line with slope $\coefg$ passing through  $( \eta H_{1,\max}+(1-\eta )H_{1,\min}, H_{\eta,\max})$ and $( H_{1,\max}, \eta H_{1,\max}+(1-\eta )H_{1,\min}, )$.

 \end{enumerate}
 
Hence, the possible $(H_1,H_2)$ form a pentagon. see Figure \ref{fig-pentagons}, top left.
 
\mk

$\bullet$ Next, consider $(S_2)$. The discussion is quite similar to the one for $(S_1)$. Indeed, it is always possible to take $H_1=H_{1,\min}$ in $(S_2)$, so $H_2$ being fixed, we look for the maximal value for $H_1$.
 Call $\widetilde\beta_{H_2,\alpha}$ the unique solution to $ H_2= \eta \alpha + (1- \eta)\G(\widetilde\beta_{ H_2,\alpha})$.
 
 By construction $\alpha$ and $\beta$ belong to $[H_{1,\min},H_{1,\max}]$, so 
 \begin{equation}
 \label{contr-alpha2}
 \max\big (H_{\eta,\min}, \frac{H_2-\eta H_{1,\max}}{1- \eta} \big) \leq \G(\widetilde\beta_{ H_2,\alpha})  = \frac{H_2-\eta\alpha}{1-\eta}\leq \min\big(H_{\eta,\max},\frac{H_2- \eta H_{1,\min}}{1-\eta}  \big).
 \end{equation}

This time we write $H_1=H_2+(1-\eta)(\beta-\G(\beta))$. To get the largest $H_1$ one must find the largest $\beta$ satisfying $(S_2)$. This amounts to taking the minimum in \eqref{contr-alpha2}. Remark that the two expressions in the above maximum  coincide when   $H_2=  H^\eta_2:= \eta(1-\eta)  H_{1,\min}+ (\eta+(1-\eta)^2) H_{1,\max}$. Observe that $\eta(1-\eta)+(\eta+(1-\eta)^2) =1$, so $H_{1,\min}\leq H_2\leq H_{1,\max}$.

\begin{enumerate}

\sk\item
$H_{\eta,\min}\leq H_2 \leq H^\eta_2 $:  the largest possible value for $\G(\beta)$ is constant equal to $H_{\eta,\min}$, the corresponding $H_1$ is  $\widetilde f_3(H_2):=H_2+ (1- \eta)(\widetilde\beta_{H_2,\alpha}+\G(\widetilde\beta_{H_2,\alpha})) $. Writing the reciprocal function $H_2=f_3(H_1)$ of $\widetilde f_3$,   a calculation shows that $f_3$ is the straight line with slope 1 passing through the points $(\eta H_{1,\min}+(1-\eta )H_{1,\max},H_{\eta,\min})$ and $( H_{1,\max}, \eta H_{1,\max}+(1-\eta )H_{1,\min})$.

\sk\item
$ H^\eta_2 < H_2\leq H_{\eta,\max}$:  the largest possible value for $\G(\beta)$   is now $\frac{H_2-\eta H_{1,\max}}{1- \eta} $, and the maximal $H_1$ is  $\widetilde f_4(H_2):=H_2+ (1- \eta)(\widetilde\beta_{H_2,\alpha}+\G(\widetilde\beta_{H_2,\alpha})) $. One can check that the  reciprocal function $H_2=f_4(H_1)$ of $\widetilde f_4$ coincides with the straight line $f_2$ of item (2) obtained above when studying the system $(S_1)$.  
 \end{enumerate}
 
Hence, once again the possible range of $(H_1,H_2)$ forms a pentagon, which differs from the one following from $(S_1)$, see Figure \ref{fig-pentagons}, top right.

\mk

The intersection $\mathcal{P}^\eta_2$ between the two above pentagons happens to remain a pentagon with five edges $( H_{1,\min}, H_{\eta,\min})$, $(H_{1,\min}, \eta H_{1,\min}+(1-\eta )H_{2,\max})$, $( \eta H_{1,\max}+(1-\eta )H_{1,\min}, H_{\eta,\max})$, $( H_{1,\max}, \eta H_{1,\max}+(1-\eta )H_{1,\min}, )$ and $( \eta H_{1,\min}+(1-\eta )H_{1,\max} , H_{\eta,\min} ) $. See again Figure \ref{fig-pentagons}, bottom right, for an illustration.
\end{proof}

\begin{center}
\begin{figure}
\includegraphics[width=7.8cm,height=6cm]{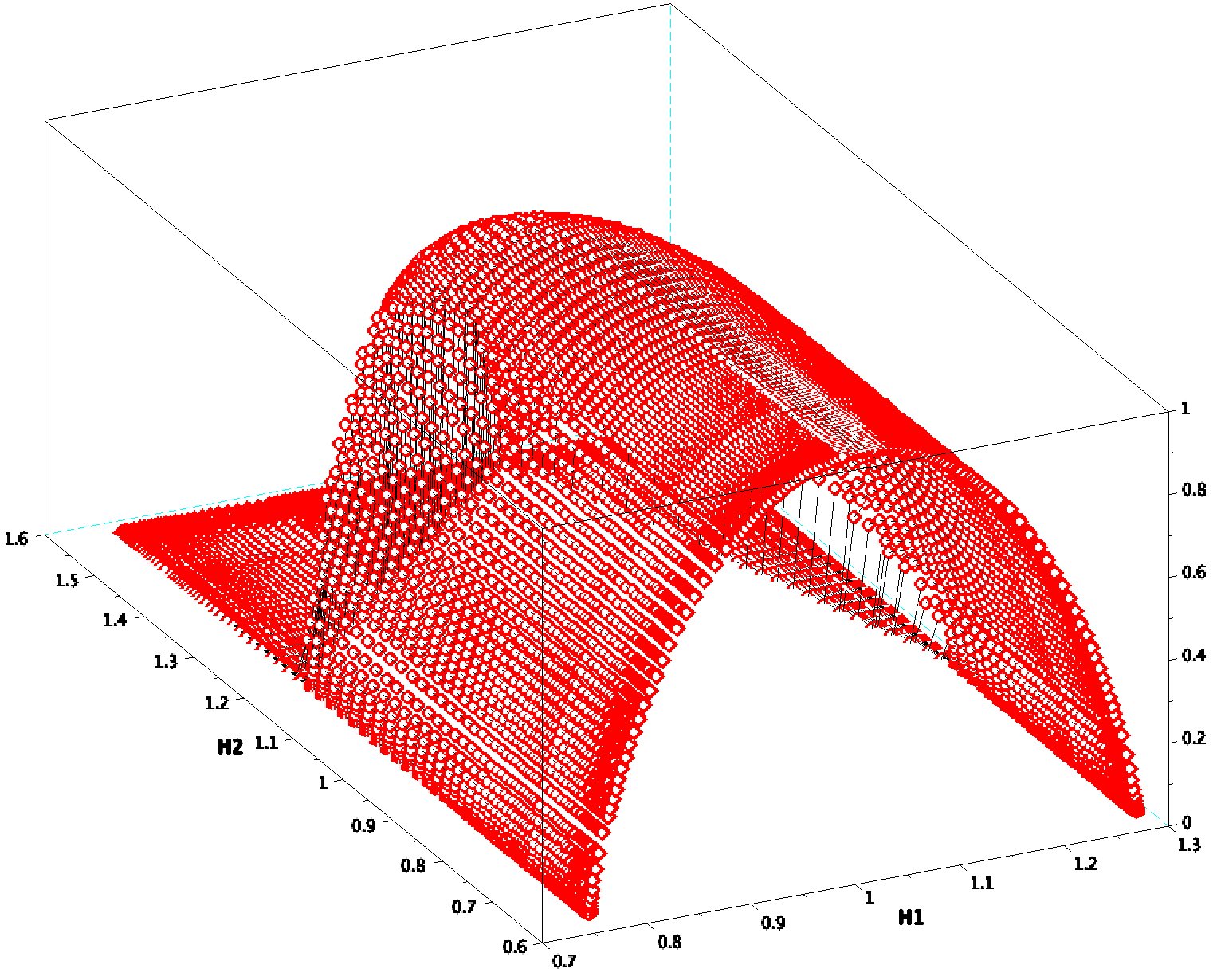} \  
\includegraphics[width=7.8cm,height=6cm]{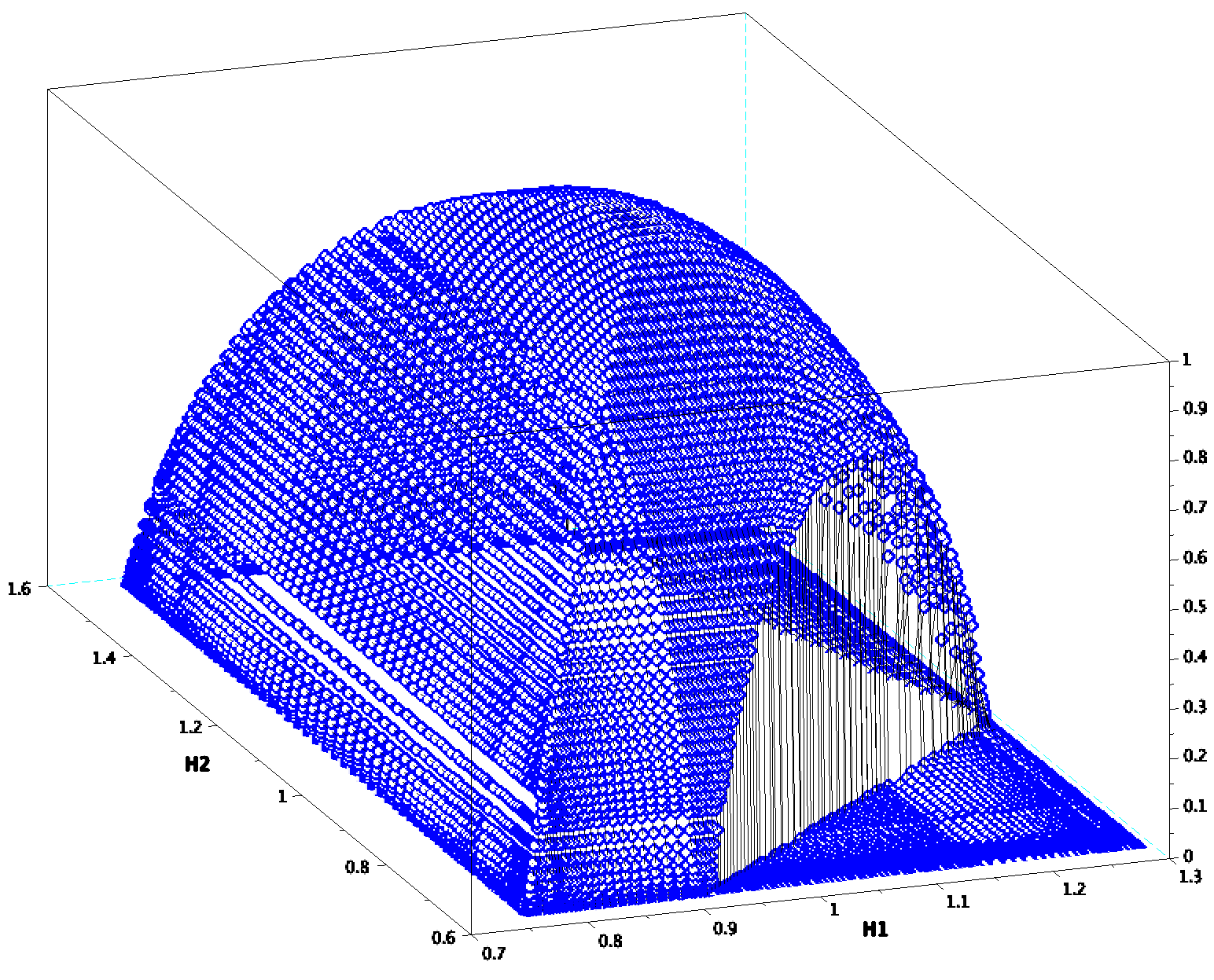}  \sk \sk  \\  
\includegraphics[width=7.8cm,height=6cm]{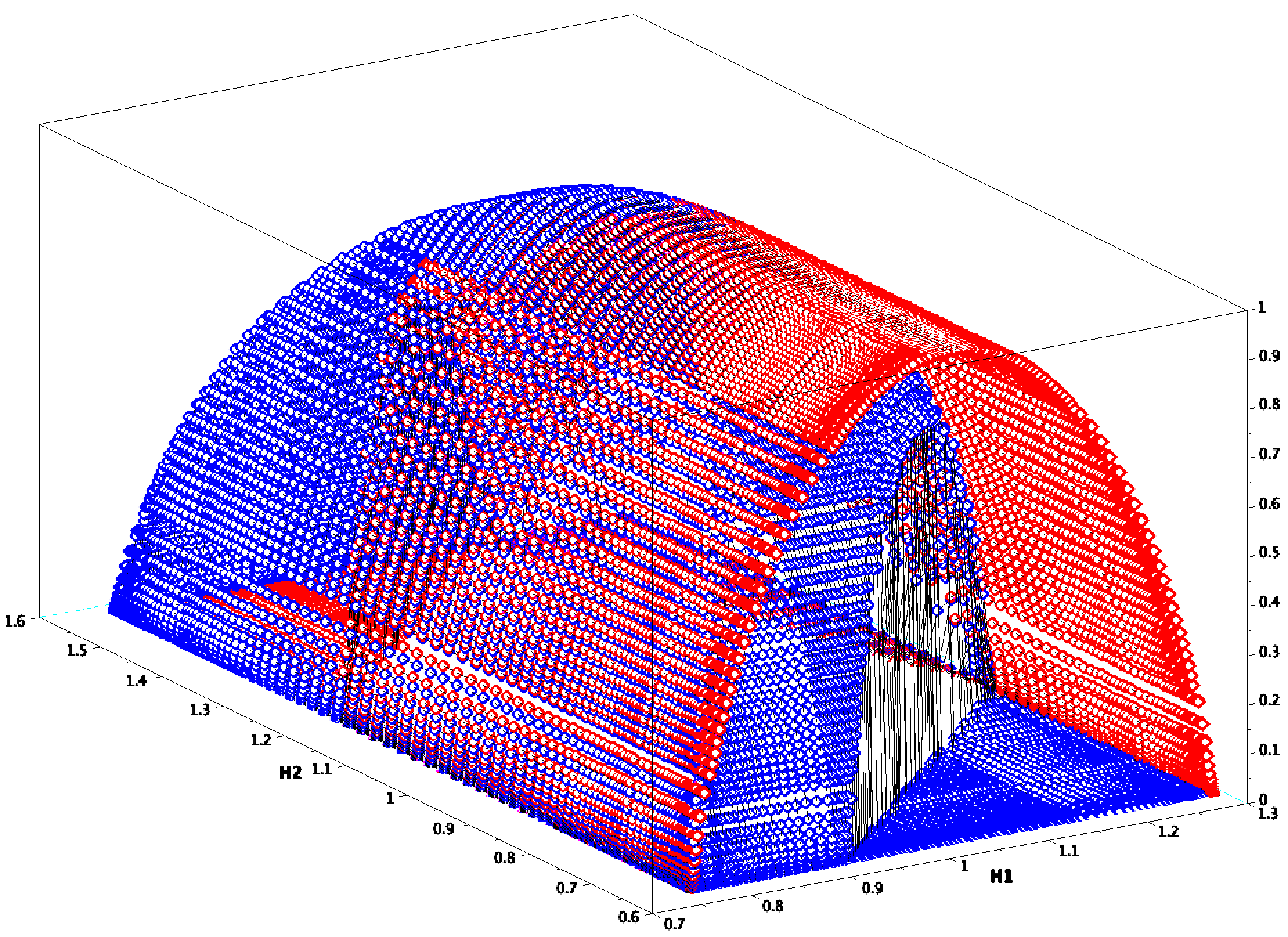} \  
\includegraphics[width=7.8cm,height=6cm]{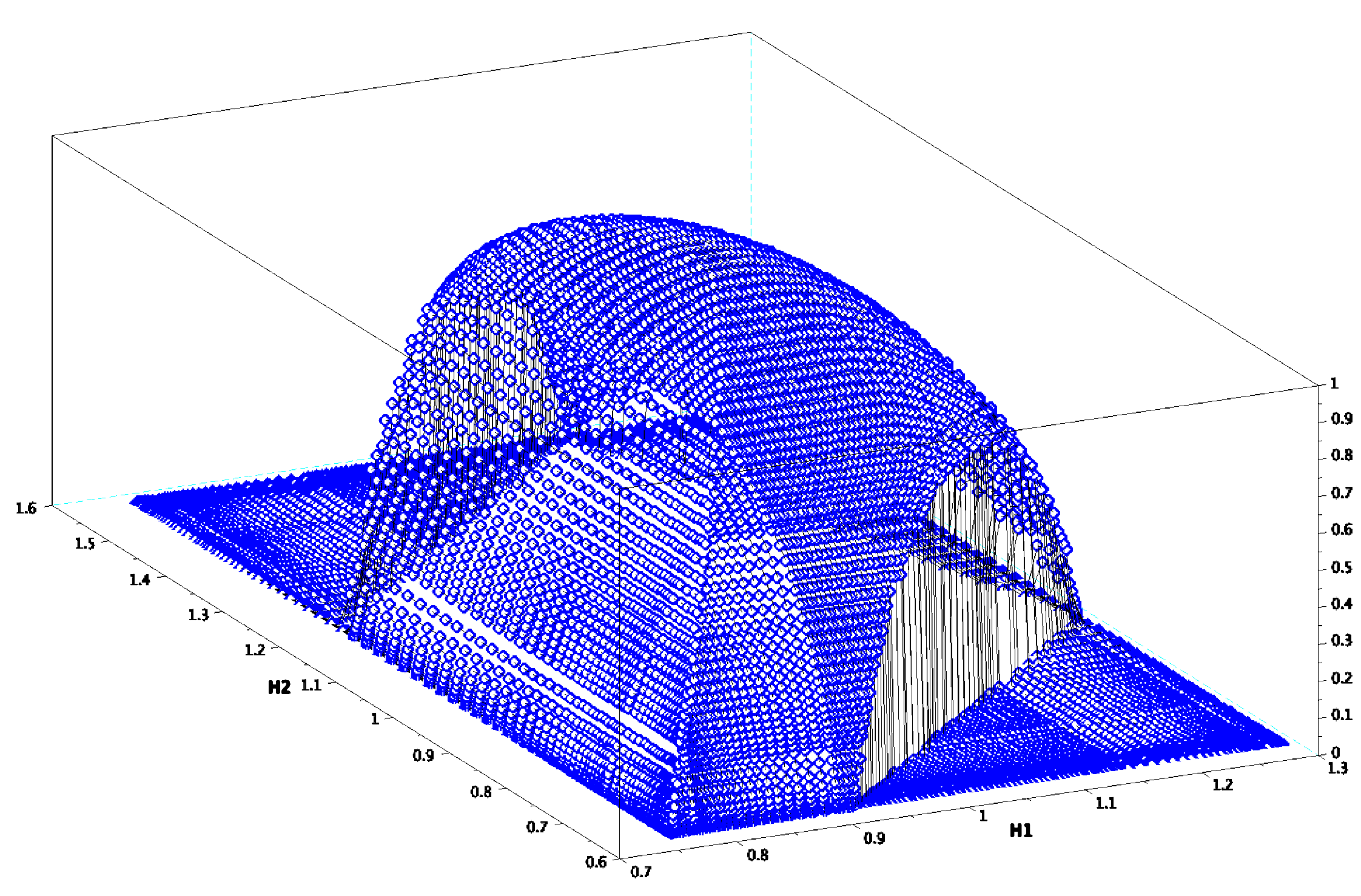}  \sk \sk  \\  
  \caption{{\bf Top:} Plot of $D_1$ and $D_2$ (equations \eqref{defD1} and \eqref{defD2}) when $\eta=0.7$, $p_1=0.4$ et $p_2=0.8$. {\bf Bottom:} Superposition of $D_1$ and $D_2$, and bivariate multifractal spectrum of $(\muu,\nue)$ (i.e. minimum of $D_1$ and $D_2$). }
\label{fig-pentagons}
 \end{figure}
 \end{center}

We move now to  the bivariate \ml spectrum $D_{\muu,\nue}$. Even if this estimate is not  sharp,  it is obvious that $D_{\muu,\nue}(H_1,H_2)\leq \min(D_{\muu}(H_1), D_{\nue}(H_2)) $. The term $D_{\muu}(H_1)$ is explicit,  let us estimate the second one.

\begin{proposition}
\label{lemm-calculdmueta2}
Almost surely, for every $ H  \in [\eta H_{1,\min}+(1-\eta)H_{2,\min}, \eta H_{1,\max}+(1-\eta)H_{2,\max}]$,  $D_{ \nue}( H )=    D_{p_1,p_2}(H) $, where
  $$D_{p_1,p_2} (H)=\eta D_{\muu} (\tau_\muu'(q_H)) + (1- \eta )D_{\mud} (\tau_\mud'(q_H)),  $$
and      $q_H $ is   the unique real number such that $H   = \eta \tau_\muu'(q_H) +(1-\eta)\tau_\mud'(q_H)$.
\end{proposition}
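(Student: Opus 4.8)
The plan is to prove the two matching bounds for $\dim\underline E_{\nue}(H)$ on the almost sure event, provided by Lemma~\ref{lem-maj-cardinal}, on which $\jA\sim\eta j$ and $\jAc\sim(1-\eta)j$ as $j\to+\infty$; this single event does not depend on $H$, and the large–deviation counts \eqref{upperbound-1} are deterministic, so the argument will yield the statement ``almost surely, for every $H$''. The guiding identity, which I would establish once and for all, is that
$$D_{p_1,p_2}(H)=\max\Big\{\eta D_\muu(\alpha)+(1-\eta)D_\mud(\gamma)\ :\ \eta\alpha+(1-\eta)\gamma=H\Big\},$$
the maximum being over $\alpha\in[H_{1,\min},H_{1,\max}]$, $\gamma\in[H_{2,\min},H_{2,\max}]$. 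Indeed $(\alpha,\gamma)\mapsto\eta D_\muu(\alpha)+(1-\eta)D_\mud(\gamma)$ is concave and the constraint affine, so the maximiser is the unique critical point of the Lagrangian, characterised by $D_\muu'(\alpha)=D_\mud'(\gamma)$; by the Legendre relations recalled after Proposition~\ref{fm} this forces $\alpha=\tau_\muu'(q)$ and $\gamma=\tau_\mud'(q)$ for a common $q$, and since $q\mapsto\eta\tau_\muu'(q)+(1-\eta)\tau_\mud'(q)$ is strictly monotone (with limits $\eta H_{1,\min}+(1-\eta)H_{2,\min}$ and $\eta H_{1,\max}+(1-\eta)H_{2,\max}$ at $\pm\infty$) the constraint pins $q=q_H$, whence the claimed value.

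For the lower bound I would set $\alpha^*=\tau_\muu'(q_H)$, $\gamma^*=\tau_\mud'(q_H)$, so that $\eta\alpha^*+(1-\eta)\gamma^*=H$, and build the natural measure $\nu(I)=\mu_{\wip_{\alpha^*}}(\IA)\,\mu_{\wip_{\gamma^*}}(\IAc)$ from the two auxiliary binomial measures of item~(5) of Proposition~\ref{fm}. Exactly as in the lower bound of Proposition~\ref{prop_majdimE} (with $\mathcal{B}=\mathcal{A}$, $\weta=\eta$) one gets $\dim\nu=\eta D_\muu(\alpha^*)+(1-\eta)D_\mud(\gamma^*)=D_{p_1,p_2}(H)$. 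Using the translate–uniform control \eqref{eq-mualphabeta} for each factor, $\nu$–almost every $x$ satisfies $\muu(\widetilde I_j^{\AAA}(x))=2^{-\jA(\alpha^*+o(1))}$ and $\mud(\widetilde I_j^{\AAAc}(x))=2^{-\jAc(\gamma^*+o(1))}$ for all $\widetilde I\in\{I,I^+,I^-\}$; by the factorisation \eqref{decomp-nu} this gives $\nue(\widetilde I_j(x))=2^{-j(H+o(1))}$ for each translate, hence $\nue(\wI_j(x))=2^{-j(H+o(1))}$ and $\dim(\nue,x)=H$. Thus $\nu$ is carried by $\underline E_{\nue}(H)$, so $\dim\underline E_{\nue}(H)\ge\dim\nu=D_{p_1,p_2}(H)$.

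For the upper bound I would run the covering argument of Proposition~\ref{prop_majdimE}, exploiting the two–sided consequence of the liminf: if $\dimu(\nue,x)=H$, then for every $\ep>0$ and infinitely many $j$ one has $\frac{\log\nue(\wI_j(x))}{-j}\in[H-\ep,H+\ep]$. Since $\nue(\wI_j(x))$ is comparable to the largest of $\nue(I_j(x)),\nue(I_j^\pm(x))$, the point $x$ lies within $2^{-j+1}$ of some $J\in\mathcal{D}_j$ whose mass $\nue(J)=\muu(J^\AAA)\mud(J^\AAAc)$ factorises and satisfies $\frac{\log\nue(J)}{-j}\in[H-2\ep,H+\ep]$. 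Discretising the admissible exponents $(\alpha,\gamma)$ into finitely many windows and bounding the counts of the corresponding $\mathcal{A}$– and $\mathcal{A}^c$–words through \eqref{upperbound-1} applied to $\muu$ on $\Sigma_{\jA}$ and to $\mud$ on $\Sigma_{\jAc}$, the number of admissible $J$ at scale $j$ is at most $2^{j(\Phi_\ep(H)+o(1))}$, where $\Phi_\ep(H)=\max\{\eta D_\muu(\alpha)+(1-\eta)D_\mud(\gamma):\eta\alpha+(1-\eta)\gamma\in[H-2\ep,H+\ep]\}$. Covering $\underline E_{\nue}(H)$ by the dilated intervals $J\pm2^{-j}$ over the scales $j$ at which the condition recurs, the standard convergent–series estimate yields $\dim\underline E_{\nue}(H)\le\Phi_\ep(H)$, and letting $\ep\to0$ gives $\dim\underline E_{\nue}(H)\le D_{p_1,p_2}(H)$ by continuity and the identity above.

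I expect the main obstacle to be the bookkeeping in the upper bound rather than any conceptual difficulty: one must reduce $\nue(\wI_j(x))$ to a single factorising dyadic neighbour, control the random fluctuations of $\jA$ and $\jAc$ uniformly across the finite discretization via Lemma~\ref{lem-maj-cardinal}, and verify that the two large–deviation counts \eqref{upperbound-1} multiply correctly because the $\mathcal{A}$– and $\mathcal{A}^c$–coordinates of a dyadic word are independent blocks. The liminf (rather than limit) nature of $\dimu$ is handled cleanly by the two–sided trapping above, which is precisely what makes the covering sharp and matches the lower bound.
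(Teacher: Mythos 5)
Your proposal is correct and follows essentially the same route as the paper: reduce to the variational problem $\max\{\eta D_\muu(\alpha)+(1-\eta)D_\mud(\gamma):\eta\alpha+(1-\eta)\gamma=H\}$, obtain the lower bound from the product measure of Proposition \ref{prop_majdimE} (the paper phrases this as the inclusion of the union of the sets $E^{p_1,\mathcal{A}}_{\alpha,\alpha}\cap E^{p_2,\mathcal{A}^c}_{\beta,\beta}$), get the matching upper bound by the same discretized covering and counting argument, and identify the maximizer via $D_\muu'(\alpha)=D_\mud'(\gamma)$ and the Legendre relations, which pins the common value $q_H$. The only cosmetic difference is that you build the optimal auxiliary measure directly at $(\tau_\muu'(q_H),\tau_\mud'(q_H))$ rather than taking a maximum over the one-parameter family afterwards.
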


\begin{proof}
 
First, the existence and unicity of $q_H$ follow from the monotonicity of  $\tau_\muu'$ and $\tau_\mud'$.

If for a real number $x$ one has $\underline{\dim}(\nue,x)= H_2$, then  there   exists a sequence $(\ep_j)$ depending on $x$ and tending to 0 such that  :
\begin{itemize}
\item[(i)]
for every $j$ and $\widetilde I\in \{I,I^+,I^-\}$, $\nue( \widetilde I_j(x)) =2^{-jH' }$ where $H' \in [H_2 -\ep_j, \eta H_{1,\max}+(1-\eta)H_{2,\max} +\ep_j]$, 
\item[(ii)]
 there are infinitely many integers $j$ such that $\nue( \widetilde I_j(x)) \geq 2^{-j(H_2+\ep_j)}$ for some $\widetilde I\in \{I,I^+,I^-\}$.
\end{itemize}
 
Consider  the interval $\widetilde I\in \{I,I^+,I^-\}$ such that   $\nue( \widetilde I_j(x)) $ is maximal, and write $\frac{\log \muu( \widetilde I_j(x)^{\mathcal{A}} ) }{2^{-\eta j}} = \eta \alpha_j$ and $\frac{\log \mud( \widetilde I_j(x)^{\mathcal{A}^c} ) }{2^{-j(1-\eta)}} =  \beta_j$, so that $\nue( \widetilde I_j(x))=2^{-j(\eta\alpha_j+(1-\eta)\beta_j)}$. Similar arguments as those used in Lemmas \ref{lem-spec-correl} and \ref{lemm-penta} imply that  there is a subsequence of $(\alpha_j,\beta_j)_{j\geq 1}$ such that $(\alpha_j)$ and $(\beta_j)$ tend respectively to $\alpha$ and $\beta$ with $\eta \alpha +(1-\eta)\beta=H_2$, and that the set of such points has a dimension equal to $\eta D_{\muu}(\alpha)+(1-\eta)D_{\muu}(\beta)$. In particular, recalling Proposition \ref{prop_majdimE}, 
$$ \bigcup_{\alpha,\beta:  \, \eta\alpha+(1-\eta)\beta=H_2} \hspace{-4mm} E^{p_1,\mathcal{A}}_{\alpha,\alpha } \cap E^{p_2,\mathcal{A}^c}_{\beta,\beta }  \subset E_{\nue}(H) .$$

The same arguments as in Lemmas   \ref{lem-spec-correl} and \ref{lemm-penta} also show that with probability one, 
$$ \dim \Big ( \bigcup_{\alpha,\beta:  \, \eta\alpha+(1-\eta)\beta=H_2}  \hspace{-4mm}  E^{p_1,\mathcal{A}}_{\alpha ,\alpha} \cap E^{p_2,\mathcal{A}^c}_{\beta ,\beta}  \Big) =\max _{\alpha,\beta:  \, \eta\alpha+(1-\eta)\beta=H_2} \big( \eta D_{\muu}(\alpha)+(1-\eta)D_{\mud}(\beta\big),$$
and this last maximum is also an upper bound for the dimension of $E_{\nue}(H) $.

The derivative of  $\alpha \mapsto  L(\alpha):= \eta D_{\muu}(\alpha)+(1-\eta)D_{\mud}( \frac{H_2 -\eta\alpha}{1-\eta})$ equals   $\eta (D'_{\muu}(\alpha)-D'_{\mud}(\frac{H_2 -\eta\alpha}{1-\eta}))$, which vanishes when $D'_{\muu}(\alpha)=D'_{\mud}(\frac{H_2 -\eta\alpha}{1-\eta})=D'_{\mud}(\beta)$. Recalling that if $\alpha =\tau_\muu'(q_\alpha)$, then $D'_{\muu}(\alpha)=q_\alpha$ (and similarly for $\beta=\tau_\mud'(q_\beta)$), one concludes the maximum of $L$ is reached when $q_\alpha=q_\beta$, hence the result. 
\end{proof}

Let us estimate the value of $D_{\muu,\nue}$ at every pair $(H_1,H_2) \in \mathcal{P}^\eta_2$. It appears that the upper bound $D_{\muu,\nue}(H_1,H_2)\leq \min(D_{\muu}(H_1), D_{\nue}(H_2)) $ is sharp only on a subset of $\mathcal{P}^\eta_2$, the exact value of $D_{\muu,\nue}(H_1,H_2)$ can be strictly smaller.

Recall the notations  $\beta_{H_1,\alpha}$ and $\widetilde\beta_{H_2,\alpha}$ of the proof of Lemma \ref{lemm-penta}, and the systems $(S_1)$ and $(S_2)$ of the beginning of Section \ref{sec-spec-decorr}.

\begin{definition}
Define 
\begin{equation}
\label{def-Feta}
F_\eta(\alpha,\beta):=\eta D_{\muu}(\alpha)+(1-\eta)D_{\muu}(\beta).
\end{equation}

For $(H_1,H_2)\in [H_{1,\min},H_{1,\max}]\times [H_{\eta,\min},H_{\eta,\max}] $, let us denote
\begin{align}
\label{defD1}
D_1(H_1,H_2)& = \max\{ F_\eta(\alpha,\beta): (\alpha,\beta) \mbox{ satisfies }(S_1)\}\\
\nonumber& = \max\{ F_\eta(\alpha,\beta_{H_1,\alpha}): (\alpha,\beta_{H_1,\alpha}) \mbox{ satisfies }(S_1)\}\\
\label{defD2}
D_2(H_1,H_2)& = \max\{ F_\eta(\alpha,  \beta ): (\alpha,\beta) \mbox{ satisfies }(S_2)\}\\
\nonumber& = \max\{ F_\eta( \alpha,\widetilde \beta_{H_2,\alpha}): (\alpha,\widetilde\beta_{H_2,\alpha}) \mbox{ satisfies }(S_2)\}.
\end{align}
\end{definition}
 
Observe that in \eqref{defD1} and \eqref{defD2}, once $H_1$, $H_2$ and $\alpha$ are fixed, the parameter $\beta$ is actually entirely determined by $\alpha$ (this is what is expressed in the second equation defining $D_1$ and $D_2$), so $\alpha$ is actually the  only moving parameter in the maxima to be computed.

\begin{proposition}
\label{prop-calculdmueta2}

Almost surely, for every $ (H_1,H_2)  \in \mathcal{P}^\eta_2$,
$$D_{\muu,\nue}(H_1,H_2)  = \min (D_1(H_1,H_2),D_2(H_1,H_2)) .$$
\end{proposition}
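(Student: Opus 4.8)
The plan is to establish the two inequalities separately, reusing the machinery already developed. For the upper bound, I would fix $(H_1,H_2)\in\mathcal{P}^\eta_2$ and a point $x\in\underline E_{\muu,\nue}(H_1,H_2)$, and exploit the dichotomy recorded in items (i)--(ii) at the start of Section \ref{sec-spec-decorr}: along any subsequence of scales, the rescaled exponents $(\alpha_{j,x},\beta_{j,x})$ realizing $\muu(\widetilde I_j(x))$ and $\nue(\widetilde I_j(x))$ must satisfy, infinitely often, \emph{either} the first system $(S_1)$ (the liminf for $\muu$ is attained and $\nue$ is no smaller) \emph{or} the second system $(S_2)$ (the liminf for $\nue$ is attained). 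Hence $\underline E_{\muu,\nue}(H_1,H_2)$ splits into two pieces according to which system is met infinitely often, and each piece is covered by a family $F_j$ of dyadic intervals exactly as in the proof of Lemma \ref{lem-spec-correl}. Counting these intervals via the large-deviation estimate \eqref{upperbound-1} applied on the $\mathcal{A}$-coordinates (of asymptotic proportion $\eta$) and the $\mathcal{A}^c$-coordinates (proportion $1-\eta$), together with Lemma \ref{lem-maj-cardinal} to control $j^{\AAA}\sim\eta j$, shows that the first piece has dimension at most $D_1(H_1,H_2)$ and the second at most $D_2(H_1,H_2)$. Since $\underline E_{\muu,\nue}(H_1,H_2)$ is their union, its dimension is bounded by $\max(D_1,D_2)$; but each system already forces a one-sided inequality on $H_2$ (resp.\ $H_1$), so any point actually lies in both constraint sets, giving the sharper bound $\min(D_1(H_1,H_2),D_2(H_1,H_2))$.

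For the lower bound I would construct, for each system separately, an explicit measure supported on a subset of $\underline E_{\muu,\nue}(H_1,H_2)$ of the desired dimension. Fix the maximizing pair $(\alpha^*,\beta^*)$ in the definition \eqref{defD1} of $D_1(H_1,H_2)$, so that $F_\eta(\alpha^*,\beta^*)=D_1(H_1,H_2)$ and $(\alpha^*,\beta^*)$ satisfies $(S_1)$. Applying Proposition \ref{prop_majdimE} with $\mathcal{B}=\mathcal{A}$, $\alpha_1=\beta_1=\alpha^*$ on the $\mathcal{A}$-coordinates and $\alpha_2=\beta_2=\beta^*$ on the $\mathcal{A}^c$-coordinates, the set $E^{p_1,\AAA}_{\alpha^*,\alpha^*}\cap E^{p_1,\AAAc}_{\beta^*,\beta^*}$ has dimension exactly $\eta D_{\muu}(\alpha^*)+(1-\eta)D_{\muu}(\beta^*)=D_1(H_1,H_2)$. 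On this set Lemma \ref{lem-trivial} and the decomposition \eqref{decomp-nu} give $\dim(\muu,x)=\eta\alpha^*+(1-\eta)\beta^*=H_1$ (since $(S_1)$ forces equality on the first line) and $\dim(\nue,x)=\eta\alpha^*+(1-\eta)\G(\beta^*)$, which is $\geq H_2$ by the second line of $(S_1)$; a short additional argument inserting intermediate scales lets one bring the $\nue$-liminf down exactly to $H_2$ while keeping the $\muu$-behaviour, so that these points lie in $\underline E_{\muu,\nue}(H_1,H_2)$. This yields $D_{\muu,\nue}(H_1,H_2)\geq D_1(H_1,H_2)$, and the symmetric construction from $(S_2)$ gives $\geq D_2(H_1,H_2)$.

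The main obstacle is the lower bound, and specifically reconciling the two systems: the constructions above separately produce points with $\dim(\nue,x)\geq H_2$ (from $(S_1)$) or $\dim(\muu,x)\geq H_1$ (from $(S_2)$), whereas membership in $\underline E_{\muu,\nue}(H_1,H_2)$ demands the exact values $H_1$ \emph{and} $H_2$ simultaneously. The delicate point is therefore to engineer, on the typical points of the auxiliary measure, a controlled sequence of scales at which the secondary exponent is pushed down to its target liminf without destroying the primary exponent — this is precisely the phenomenon, flagged in the introduction, that the two liminf local dimensions may be attained at very different scales. I would handle this by interleaving long blocks of ``$(\alpha^*,\beta^*)$-generations'' (which fix the Hausdorff dimension) with sparse, rapidly spaced blocks tuned to realize the required liminf of the other measure, checking that the sparse blocks are negligible for the dimension count while guaranteeing the correct liminf; the Borel--Cantelli and concavity estimates already used in Lemmas \ref{lem-spec-correl} and \ref{lemm-penta} then close the argument. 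Finally, to obtain $\min$ rather than $\max$ one observes that a point counted for the $(S_1)$-construction automatically has $\dim(\nue,x)=H_2$ and $\dim(\muu,x)=H_1$, hence is also constrained by $(S_2)$, so its contribution cannot exceed $D_2$ either; combining this with the upper bound forces the value to be exactly $\min(D_1,D_2)$.
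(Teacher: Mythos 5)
Your upper bound is essentially the paper's argument: every $x$ with $\dimi(\muu,x)=H_1$ and $\dimi(\nue,x)=H_2$ realizes $(S_1)$ infinitely often (at the scales where the $\muu$-liminf is attained) \emph{and} $(S_2)$ infinitely often (at the scales where the $\nue$-liminf is attained), so $\underline E_{\muu,\nue}(H_1,H_2)$ sits inside the \emph{intersection} of the two covering families, whence $\dim \leq \min(D_1,D_2)$. (Your intermediate ``split into a union, bounded by the max'' step is a detour — the point is intersection, not union — but you land on the right conclusion.)

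The lower bound is where the proposal breaks. You claim two separate constructions giving $D_{\muu,\nue}(H_1,H_2)\geq D_1(H_1,H_2)$ and $D_{\muu,\nue}(H_1,H_2)\geq D_2(H_1,H_2)$, i.e.\ a lower bound of $\max(D_1,D_2)$. Combined with your own upper bound $\min(D_1,D_2)$ this would force $D_1=D_2$ on all of $\mathcal{P}^\eta_2$, which is false (the two functions $D_1$ and $D_2$ of \eqref{defD1}--\eqref{defD2} are genuinely different; see Figure \ref{fig-pentagons}). The error is hidden in the ``short additional argument inserting intermediate scales'': the set $E^{p_1,\AAA}_{\alpha^*,\alpha^*}\cap E^{p_1,\AAAc}_{\beta^*,\beta^*}$ consists of points where $\nue$ has \emph{limit} local dimension $\eta\alpha^*+(1-\eta)\G(\beta^*)$, which is in general strictly larger than $H_2$; to pull the $\nue$-liminf down to $H_2$ you must insert, at infinitely many scales $P_n$, blocks governed by an $(S_2)$-optimal pair $(\alpha',\beta')$. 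These blocks are \emph{not} negligible for the dimension: to change a liminf at scale $P_{n+1}$ the block $[P_n,P_{n+1}]$ must dominate the entire history ($P_{n+1}\gg P_n$), so at the scales $P_{2n+3}$ the auxiliary measure $\nu$ satisfies $\nu(I)\approx|I|^{F_\eta(\alpha',\beta')}$ rather than $|I|^{F_\eta(\alpha^*,\beta^*)}$. Hence $\dimi(\nu,x)=\min\bigl(F_\eta(\alpha^*,\beta^*),F_\eta(\alpha',\beta')\bigr)$ at $\nu$-typical points, and Billingsley's lemma yields exactly the lower bound $\min(D_1,D_2)$ — this is the content of Proposition \ref{prop-expo-simultane}, whose alternating-block construction is the genuinely delicate part of the proof and cannot be replaced by two independent one-system constructions. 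Your closing sentence (``its contribution cannot exceed $D_2$ either'') implicitly concedes this but does not repair the claimed inequality $\geq D_1$, which remains false whenever $D_1>D_2$.
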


\begin{proof}
Fix $(H_1,H_2) \in \mathcal{P}^\eta_2$.
When $\dimi(\muu,x)=H_1$ and $\dimi(\nue,x)=H_2$, items (i) and (ii) of the beginning of Section \ref{sec-spec-decorr} hold. 

Let us call $\mathcal{G}(H_1,H_2,\alpha,\beta )$ the sets of points $y\in \zu$ such that $(S_1)$ is satisfied by the quadruplet $(H_1,H_2,\alpha,\beta)$,   $\dimi(\muu,y) =H_1$ and $\dimi(\nue,y) \geq H_2$.

Similarly, call $ \widetilde {\mathcal{G}} (H_1,H_2,\alpha,\beta )$ the sets of points $y\in \zu$ such   that $(S_2)$ is satisfied by the quadruplet $(H_1,H_2,\alpha,\beta)$,  $\dimi(\muu,y) \geq H_1$ and $\dimi(\nue,y) = H_2$.

By construction, when $\dimi(\muu,x)=H_1$ and $\dimi(\nue,x)=H_2$, $x$ must belong to
$$\Big(\bigcup_{\alpha,\beta\in  [H_{1,\min},H_{1,\max}] }   \mathcal{G} (H_1,H_2,\alpha,\beta )\Big) \cap \Big(\bigcup_{\alpha,\beta\in  [H_{1,\min},H_{1,\max}] }     \widetilde {\mathcal{G}} (H_1,H_2,\alpha,\beta ) \Big).$$
 
\begin{lemma}
\label{lem-max}
There exist a unique value $\alpha_{H_1,H_2}$ such that  $D_1(H_1,H_2)=F_\eta(\alpha_{H_1,H_2},\beta_{H_1, \alpha_{H_1,H_2}})$, and  a unique value $\widetilde\alpha_{H_1,H_2}$ such that  $D_2(H_1,H_2)=F_\eta(\widetilde\alpha_{H_1,H_2},\widetilde\beta_{H_2, \widetilde\alpha_{H_1,H_2}})$.
\end{lemma}

This lemma simply follows from the strict concavity of $D_{\muu}$ and the fact that the range of possible $\alpha$'s in \eqref{defD1} and \eqref{defD2} is an  interval.

\medskip

A standard discretization argument (identical to the one used in the proof of Lemma \ref{lem-spec-correl}) shows that  the Hausdorff dimension of the union of sets $\bigcup_{\alpha,\beta\in  [H_{1,\min},H_{1,\max}] }  \mathcal{G}(H_1,H_2,\alpha,\beta )$ is reached   for the pair $(\alpha_{H_1,H_2},\beta_{H_1, \alpha_{H_1,H_2}})$, and similarly for the sets $ \widetilde {\mathcal{G}} (H_1,H_2,\alpha,\beta )$ with the exponents $(\widetilde\alpha_{H_1,H_2},\widetilde\beta_{H_2, \widetilde\alpha_{H_1,H_2}})$.

In particular, \begin{equation}
\label{last-ineq}
D_{\muu,\nue}(H_1,H_2) \leq \min\Big (F_\eta\big(\alpha_{H_1,H_2}, \beta_{H_1, \alpha_{H_1H_2} }\big), F_\eta\big (\widetilde\alpha_{H_1,H_2}, \widetilde\beta_{H_2,\widetilde\alpha_{H_1,H_2}}\big)\Big).
\end{equation}

The conclusion follows from the next proposition, that we prove just hereafter.
\begin{proposition}
\label{prop-expo-simultane}
Let $(H_1,H_2)\in \mathcal{P}^\eta_2$, and let $(\alpha,\alpha',\beta,\beta')$ be such that $(\alpha,\beta )$ and $(\alpha',  \beta ' )$ satisfy respectively the systems $(S_1)$ and $(S_2)$. Then 
$$\dim    {\mathcal{G} } (H_1,H_2, \alpha , \beta  )\cap  \widetilde {\mathcal{G} } (H_1,H_2,   \alpha', \beta ') = \min(F_\eta(\alpha,\beta ), F_\eta(\alpha', \beta ')).$$ 
\end{proposition}

From Proposition \ref{prop-expo-simultane} we deduce    that the dimension  of $ \mathcal{G} (H_1,H_2, \alpha_{H_1,H_2}, \beta_{H_1, \alpha_{H_1H_2} } )\cap \widetilde {\mathcal{G} } (H_1,H_2, \widetilde \alpha_{H_1,H_2}, \widetilde\beta_{H_1, \widetilde\alpha_{H_1H_2} } )$ equals the right-hand-side of  \eqref{last-ineq} above, hence concluding to the equality in the last equation.     
\end{proof}

We now conclude by proving Proposition \ref{prop-expo-simultane}.

\begin{proof}[Proof of Proposition \ref{prop-expo-simultane}]

Recall that in the situation of Proposition \ref{prop-expo-simultane}, $\beta=\beta_{H_1,\alpha}=\frac{H_1-\eta\alpha}{1-\eta}$ and $\beta'=\widetilde \beta_{H_2.\alpha'}$ (the unique solution to $ H_2= \eta \alpha + (1- \eta)\G(\widetilde\beta_{ H_2,\alpha})$).

Call $\delta = \min(F_\eta(\alpha,\beta ), F_\eta(\alpha', \beta' ))$. We build a measure $\nu$ of dimension $\delta$ that is supported by $  {\mathcal{G} } (H_1,H_2, \alpha , \beta )\cap \widetilde {\mathcal{G} } (H_1,H_2,   \alpha',\beta' ) $.

Lemma \ref{lem-renouv} is applied to the four parameters $\wip_\gamma$ for $ \gamma \in \{\alpha, \beta,\alpha',{\beta'}\}$.
Recall that $\wip_\gamma$ is the unique real number  such that $H_{\wip_{\gamma},e}=D_{\mu_{\wip_\gamma}}(H_{\wip_{\gamma},e})= \tau_{\muu}^*(\gamma)=D_{\muu}(\gamma)$.

Fix $\ep<1/100$, and consider  the sequence  $(\widetilde J_n)_{n\geq 1}$  defined by 
$$\widetilde J_n=\max( \max( J_{p_1,p_2,\gamma,n,\ep}: \gamma \in \{\alpha, \beta,\alpha',{\beta'}\} ) ,\widetilde J_{n-1}+2),$$
so that  the four sets   $ \mathcal{E}_{p_1,p_2,\gamma,n,\ep}$    satisfy that $\mu_{\wip_\gamma}(  \mathcal{E}_{p_1,p_2,\gamma,n,\ep}) \geq 1-\ep2^{-n-2} $ for the same sequence $\widetilde J_n$  (instead of  $J_{p_1,p_2,\gamma,n,\ep}$) in  \eqref{eq-renouv}.

 Call   $\gamma_1= \min(H_{1,\max},H_{2,\max})/2$, $\gamma_2= 2\times\max(H_{1,\max},H_{2,\max})$, and consider an increasing  sequence $(  J_n)_{n\geq 0}$ of integers with $J_0=\widetilde J_1 $ and such that for every $n\geq 1$:
\begin{itemize}

\item[(A1)]  $  {J_n}2^{-n} \geq   (J_n+2)^{\ep_{  J_n}}$ and  $ \sqrt{2^{-n}   (J_n+2) \log ( 2^{-n}   (J_n+2))/\ep_{ \widetilde  J_{n+1} }}  \leq 2^{-n-3}   J_{n}$.

\item[(A2)] 
$ J_{n} \geq   \widetilde  J_{n-1}+  \widetilde J_{n}+1$,

\item[(A3)] 
$ 2^{-n-3}(   J_{n} -  J_{n-1}) \geq \frac{\gamma_2}{\gamma_1}\widetilde J_{n}  $.

\item[(A4)] 
  $   J_{n} \geq   \max(\frac{2  J_{n-1}}{\gamma_1}, 2^{(n+3)\frac{\gamma_2}{\gamma_1}})$.

\end{itemize} 
In the first property, $\ep^J$ refers to Lemma \ref{lem-maj-cardinal}. This property, combined with  \eqref{majA_j},  implies that  for every $j\geq 2^{-n}   J_{n}$,
\begin{align}
\label{maj-card-1}
|\# \mathcal{A}_{  J_n,   J_n+j } - \eta j|  & \leq    2^{-n-2} j  \  \  \mbox{ and }  \ \ 
|\# \mathcal{A}^c_{  J_n,   J_n+j } - (1-\eta ) j|  \leq   2^{-n-2}j .
\end{align}

We set $P_1=0$, and we assume that the mass of the measure $\nu$ is given on every interval $I\in \Sigma_{P_{2n+1}}$ for some integer $n\geq 0$.

\newcommand{\wA}{{\widetilde{\mathcal{A}}}}

Recall \eqref{def-Amn}, and  set for any integer $n$
\begin{equation}
\label{def-APn}
\wA_{n}=( \mathcal{A}-{n} ) \cap \N^* \ \mbox{ and } \ \ \wA^c_{n}=( \mathcal{A}^c-{n} ) \cap \N^*.
\end{equation}

\mk
{\bf $\bullet$  Step $2n+2$:}   
Choose an integer $P_{2n+2}  $  so large that $\min(\# \mathcal{A}_{P_{2n+1},P_{2n+2}} , \#\mathcal{A}^c_{P_{2n+1},P_{2n+2} }) 
 \geq    J_{2n+2}$ (in particular, $P_{2n+2} -P_{2n+1} \geq   J_{2n+2}$).
For every $I_w\in {\Sigma}_{P_{2n+2}}$, write $w =  w_{|P_{2n+1}} w'$ for $w'\in \Sigma_{P_{2n+2}-P_{2n+1}}$, and set
 $$\nu(I_w)= \nu( I_{w_{|P_{2n+1}} } )\mu_{\wip_\alpha}(I_{w'}^{\wA_{P_{2n+1}}} )\mu_{\wip_{\beta }}(I_{w'}^{\wA_{P_{2n+1}} ^c}).$$

\mk
{\bf $\bullet$  Step $2n+3$:}  As above, choose  $P_{2n+3}  $ so large that  $\min(\#\mathcal{A}_{P_{2n+2},P_{2n+3}} ,\#\mathcal{A}^c_{P_{2n+2},P_{2n+3} }) \geq    J_{2n+3}$.
For every $I_w\in {\Sigma}_{P_{2n+3}}$, write $w =  w_{|P_{2n+2}} w'$ for $w'\in \Sigma_{P_{2n+3}-P_{2n+2}}$, and set
 $$\nu(I_w)= \nu( I_{w_{|P_{2n+2}} } )\mu_{\wip_{\alpha'}}(I_{w'}^{\wA_{P_{2n+2}  } }) \mu_{\wip_{{\beta'}}}(I_{w'}^{\wA_{P_{2n+2} }^c}).$$

The intuition is that at generation $P_{2n+2}$,  for the intervals carrying the mass of the measure $\nu$, $\muu(I)\sim |I|^{\eta\alpha+(1-\eta)\beta}$ and $\nue(I)\sim|I|^{\eta\alpha+(1-\eta)\G(\beta)}$, while at generation $P_{2n+3}$, $\muu(I)\sim |I|^{\eta\alpha'+(1-\eta)\beta'}$ and $\nue(I)\sim|I|^{\eta\alpha'+(1-\eta)\G(\beta')}$.

We now list the properties  of the measure $\nu$ we constructed, together with those of $\muu$ and $\mud$.

\begin{lemma}
\label{lemma-final1}
Write  the dyadic decomposition of   $x\in \zu$ as $x=0,x_1x_2....x_n..$, where $x_n\in \Sigma_{P_{n+1}-P_{n}}$.
Consider  
$$   \mathcal{I}_{2n,\ep}= \left \{I\in \Sigma_{P_{2n+1}-P_{2n}}:\begin{cases} I^{\wA_{P_{2n-1}}} \mbox{ contains a point }x \in  \mathcal{E}_{p_1,p_2,\alpha,n,\ep} \\
I^{\wA_{P_{2n-1}}^c} \mbox{ contains a point }x \in  \mathcal{E}_{p_1,p_2,\beta,n,\ep}\end{cases} \right \}$$
and
$$   \mathcal{I}_{2n+1,\ep}=  \left \{I\in \Sigma_{P_{2n+2}-P_{2n+1}}:\begin{cases} I^{\wA_{P_{2n}}} \mbox{ contains a point }x \in  \mathcal{E}_{p_1,p_2,\alpha',n,\ep} \\
I^{\wA_{P_{2n}}^c} \mbox{ contains a point }x \in  \mathcal{E}_{p_1,p_2,\beta',n,\ep}\end{cases} \right\}$$
Finally, let
$$E(\nu,\ep)= \Big\{x\in \zu: \ \forall \,n\geq 1 , \ x_n \in \bigcup_{I\in  \mathcal{I}_{n,\ep}}I \Big\}.$$
Then $\nu(E(\nu,\ep))\geq 1-\ep$.
\end{lemma}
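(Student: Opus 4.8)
The goal is to show that the measure $\nu$ charges the set $E(\nu,\ep)$ with mass at least $1-\ep$. My plan is to control, at each construction step $n$, the $\nu$-mass lost by the intervals that fail the defining condition of $\mathcal{I}_{n,\ep}$, and then sum these losses over $n$ via a Borel--Cantelli / union-bound argument, exploiting that $\sum_n \ep 2^{-n-2}$ is summable and bounded by $\ep$.

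First I would fix the step parity and work at a generic step $2n+1$ (step $2n+2$ in the construction indexing); the odd steps are handled identically by symmetry, exchanging $(\alpha,\beta)$ with $(\alpha',\beta')$. By the definition of $\nu$ at Step $2n+2$, the conditional $\nu$-mass distributed among the intervals $I_{w'} \in \Sigma_{P_{2n+2}-P_{2n+1}}$ below a fixed parent $I_{w_{|P_{2n+1}}}$ is exactly $\mu_{\wip_\alpha}(I_{w'}^{\wA_{P_{2n+1}}}) \mu_{\wip_\beta}(I_{w'}^{\wA_{P_{2n+1}}^c})$, which is a product of a $\mu_{\wip_\alpha}$-mass on the $\wA_{P_{2n+1}}$-coordinates and a $\mu_{\wip_\beta}$-mass on the complementary coordinates. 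The key point is that this conditional law is precisely the product measure $\mu_{\wip_\alpha} \otimes \mu_{\wip_\beta}$ read through the coordinate splitting induced by $\wA_{P_{2n+1}}$. Thus the conditional $\nu$-probability that $x_{n}$ lands in an interval $I \notin \mathcal{I}_{2n+1,\ep}$ is bounded by the probability that either the $\wA_{P_{2n+1}}$-projection avoids $\mathcal{E}_{p_1,p_2,\alpha',n,\ep}$ or the complementary projection avoids $\mathcal{E}_{p_1,p_2,\beta',n,\ep}$.

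The heart of the argument is then to invoke Lemma \ref{lem-renouv}, which guarantees $\mu_{\wip_\gamma}(\mathcal{E}_{p_1,p_2,\gamma,n,\ep}) \geq 1 - \ep 2^{-n-2}$ for each $\gamma \in \{\alpha,\beta,\alpha',\beta'\}$, with the synchronized threshold $\widetilde J_n$ arranged precisely so that all four estimates hold from the same generation. By the product structure and a union bound, the conditional $\nu$-mass of intervals failing the condition at step $n$ is at most $\ep 2^{-n-2} + \ep 2^{-n-2} = \ep 2^{-n-1}$. I would therefore write, denoting by $B_n$ the set of $x$ whose block $x_n$ fails the membership $x_n \in \bigcup_{I\in\mathcal{I}_{n,\ep}} I$,
\begin{equation*}
\nu(B_n) \leq \ep 2^{-n-1},
\end{equation*}
uniformly in the preceding blocks (since the bound holds conditionally on every parent interval). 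Here I must be careful that the sets $\mathcal{E}_{p_1,p_2,\gamma,n,\ep}$ are defined in terms of the absolute generation $j$, whereas the blocks live on the relative coordinates after the coordinate restriction to $\wA_{P_{2n-1}}$; the conditions (A1)--(A4) on the $J_n$ and $P_n$, together with \eqref{maj-card-1}, ensure that the relative generation $j^{\wA}$ of any sub-interval of the block exceeds the threshold $\widetilde J_n$, so that $\mathcal{P}(p_1,p_2,\gamma,n,j)$ does become effective on the relevant scales.

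Finally, summing over $n$ gives
\begin{equation*}
\nu\Big(\bigcup_{n\geq 1} B_n\Big) \leq \sum_{n\geq 1} \ep 2^{-n-1} = \frac{\ep}{2} < \ep,
\end{equation*}
and since $E(\nu,\ep) = \zu \setminus \bigcup_{n\geq 1} B_n$, we conclude $\nu(E(\nu,\ep)) \geq 1 - \ep$, as claimed. The main obstacle I anticipate is the bookkeeping in the previous paragraph: correctly identifying the conditional law of $\nu$ on each block as the appropriate product of binomial measures read through the random coordinate partition $\wA$, and verifying that the scale-matching conditions (A1)--(A4) guarantee the renewal estimates of Lemma \ref{lem-renouv} apply at the relative generations $j^{\wA_{P_{2n-1}}}$ and $j^{\wA_{P_{2n-1}}^c}$ rather than at the absolute ones. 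Everything else is a routine union bound and summation.
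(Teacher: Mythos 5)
Your proposal is correct and follows essentially the same route as the paper: identify the conditional law of $\nu$ on each block as the product of $\mu_{\wip_\alpha}$ and $\mu_{\wip_\beta}$ (resp. $\mu_{\wip_{\alpha'}}$ and $\mu_{\wip_{\beta'}}$) through the coordinate splitting by $\wA$, apply Lemma \ref{lem-renouv} with the synchronized thresholds $\widetilde J_n$ to bound the failure mass at step $n$ by $2\cdot\ep 2^{-n-2}$, and sum over $n$. Your extra remark on matching relative versus absolute generations via (A1)--(A4) is a sound precaution that the paper leaves implicit.
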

\begin{proof}

By construction, for every $n$, $\mu_{\wip_\alpha}(\bigcup_{I\in  \mathcal{I}_{n,\ep}}I^{\wA_{P_{n}} }) \geq \mu_{\wip_\alpha}(\mathcal{E}_{p_1,p_2,\alpha,n,\ep}) \geq 1-\ep 2^{-n-2}$ and  $\mu_{\wip_\beta}(\bigcup_{I\in  \mathcal{I}_{n,\ep}}I^{\wA_{P_{n} }^c} ) \geq \mu_{\wip_\beta}(\mathcal{E}_{p_1,p_2,\beta,n,\ep}) \geq 1-\ep 2^{-n-2}$.

Hence,  if $x=0,x_1x_2....x_n..$, where $x_n\in \Sigma_{P_{n+1}-P_{n}}$, $\nu \Big( \Big\{x\in \zu:  \ x_n \in \bigcup_{I\in  \mathcal{I}_{n,\ep}}I \Big\} \Big) \geq 1- \ep 2^{-n-1}$. Finally, 
\begin{align*}
\nu(E(\nu,\ep)^c) & \leq  \sum_{n=1}^{\infty} \nu \Big( \Big\{x\in \zu:  \ x_n \in \bigcup_{I\in  \mathcal{I}_{n,\ep}}I \Big\} \Big) \leq  2  \sum_{n=1}^{\infty} \ \ep 2^{-n-1}  \le \ep.
\end{align*}

\end{proof}

From the definitions and the assumptions (A1-4) on $J_n$, for every $x\in  E(\nu,\ep)$, for every $n\geq 1$:
\begin{enumerate}
\item $I^\mathcal{A}_{P_{2n+1}}(x) $ satisfies $\mathcal{P}(p_1,p_2,\alpha', \ep, P_{2n+1}^{\mathcal{A}} )$ and $I^{\mathcal{A}^c}_{P_{2n+1}}(x) $ satisfies $\mathcal{P}(p_1,p_2,{\beta'}, \ep,P_{2n+1}^{\mathcal{A}^c})$.
\item
 $I^\mathcal{A}_{P_{2n+2}}(x) $ satisfies $\mathcal{P}(p_1,p_2,\alpha, \ep, P_{2n+2}^{\mathcal{A}} )$ and $I^{\mathcal{A}^c}_{P_{2n+2}}(x) $ satisfies $\mathcal{P}(p_1,p_2,\beta, \ep,P_{2n+2}^{\mathcal{A}^c})$.
\end{enumerate}

\begin{lemma}
\label{lemma-final4}
For every $n\geq 1$,   every $\alpha$,    every $I\in \Sigma_{P_{2n+1}} $, if $I^\mathcal{A}$ satisfies $\mathcal{P}(p_1,p_2,\alpha',  n,P_{2n+1} ^{\mathcal{A}} )$ and $I^{\mathcal{A}^c} $ satisfies $\mathcal{P}(p_1,p_2,{\beta'},n,P_{2n+1}^{\mathcal{A}^c})$, then for every $\widetilde I\in \{I,I^+,I^-\}$,
\begin{eqnarray*}
2^{-{P_{2n+1}} (\eta \alpha'+(1-\eta){\beta'}+2^{-n})}&  \leq  \muu(\widetilde I) \leq & 2^{-{P_{2n+1}} (\eta \alpha'+(1-\eta){\beta'}-2^{-n})} \\
2^{-{P_{2n+1}} (\eta \alpha'+(1-\eta)\G({\beta'})+2^{-n})} & \leq \nue(\widetilde I) \leq & 2^{-{P_{2n+1}} (\eta \alpha'+(1-\eta)\G({\beta'})-2^{-n})} \\
2^{-{P_{2n+1}} (F_\eta(\alpha',{\beta'})+2^{-n})} & \leq \nu(\widetilde I) \leq & 2^{-{P_{2n+1}} (F_\eta(\alpha',{\beta'})-2^{-n})} .
\end{eqnarray*}

\end{lemma}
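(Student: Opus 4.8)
The plan is to prove the three double inequalities simultaneously, first for $\widetilde I=I$ and then for the two neighbours $I^+,I^-$, the latter being the delicate point. Throughout write $j=P_{2n+1}$, $j^\mathcal{A}=P_{2n+1}^\mathcal{A}$, $j^{\mathcal{A}^c}=P_{2n+1}^{\mathcal{A}^c}$ and let $w\in\Sigma_j$ be the word with $I=I_w$. First I would record three factorisations at the level of $I$ itself: by \eqref{decomp-nu}, $\muu(I)=\muu(\IA)\muu(\IAc)$ and $\nue(I)=\muu(\IA)\mud(\IAc)$, while the construction of $\nu$ yields $\nu(I)=\mu_{\wip_{\alpha'}}(\IA)\,\mu_{\wip_{\beta'}}(\IAc)\cdot 2^{\pm O(P_{2n})}$. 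The correction factor comes from the earlier blocks $[P_{2m},P_{2m+1}]$, $m<n$, on which $\nu$ uses the parameters $(\alpha,\beta)$ rather than $(\alpha',\beta')$; their total length is $O(P_{2n})$, and the rapid growth imposed by (A2)--(A4) makes $P_{2n}$ negligible with respect to $2^{-n}P_{2n+1}$. Applying the hypothesis $\mathcal{P}(p_1,p_2,\alpha',n,j^\mathcal{A})$ to $\IA$ and $\mathcal{P}(p_1,p_2,\beta',n,j^{\mathcal{A}^c})$ to $\IAc$ bounds each binomial factor, and \eqref{maj-card-1} gives $j^\mathcal{A}\approx\eta P_{2n+1}$, $j^{\mathcal{A}^c}\approx(1-\eta)P_{2n+1}$. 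Writing for instance $j^\mathcal{A}\alpha'+j^{\mathcal{A}^c}\beta'=P_{2n+1}(\eta\alpha'+(1-\eta)\beta')+(j^\mathcal{A}-\eta P_{2n+1})(\alpha'-\beta')$, with $\alpha',\beta'\in[H_{1,\min},H_{1,\max}]$ bounded, all three estimates follow for $\widetilde I=I$, the error being $O(2^{-n}P_{2n+1})$, which the slack between the exponent $2^{-n-2}$ in $\mathcal{P}$ and the $2^{-n}$ in the statement absorbs.

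The crux is the passage to $I^+,I^-$, since $(I^\pm)^\mathcal{A}$ need not be a neighbour of $\IA$: a carry can run through a long trailing block of $1$'s (or $0$'s) of $w$ and push $(I^+)^{\mathcal{A}^c}$ far to the left of $\IAc$. The observation that saves the argument is that $\mathcal{P}$ already forbids long trailing runs. Indeed, if $w^\mathcal{A}$ ended in $m$ consecutive $1$'s then $\muu((\IA)^+)/\muu(\IA)=(p_1/(1-p_1))^{m-1}$, and comparing the lower bound for $\muu((\IA)^+)$ with the upper bound for $\muu(\IA)$ furnished by $\mathcal{P}$ gives $(p_1/(1-p_1))^{m-1}\ge 2^{-j^\mathcal{A}2^{-n-1}}$, hence $m=O(j^\mathcal{A}2^{-n})$ because $p_1<1/2$; the symmetric comparison with $(\IA)^-$ (using the factor $((1-p_1)/p_1)^{k_0-1}$) bounds the trailing run of $0$'s of $w^\mathcal{A}$. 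The same holds for $w^{\mathcal{A}^c}$. Since every trailing $1$ of $w$ lies in $\mathcal{A}$ or in $\mathcal{A}^c$ and is then a trailing $1$ of $w^\mathcal{A}$ or of $w^{\mathcal{A}^c}$, the maximal trailing runs of $w$ itself are $O(P_{2n+1}2^{-n})$.

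With this run-length bound the neighbours become routine. The word $w^+$ (resp. $w^-$) differs from $w$ only on its maximal trailing block of $1$'s (resp. $0$'s) together with the bit preceding it, a set of positions of cardinality $O(P_{2n+1}2^{-n})$ that moreover lies inside the last block $[P_{2n},P_{2n+1}]$. Each flipped bit multiplies $\muu$, $\mud$ and, in that last block, $\nu$ by a factor lying in a fixed compact subset of $(0,\infty)$ depending only on $p_1,p_2,\wip_{\alpha'},\wip_{\beta'}$; and since $\nue(\widetilde I)=\muu(\widetilde I^{\mathcal{A}})\mud(\widetilde I^{\mathcal{A}^c})$ it behaves likewise. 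Hence for each $\mu\in\{\muu,\mud,\nue,\nu\}$ one obtains $\mu(\widetilde I)=\mu(I)\,2^{\pm O(P_{2n+1}2^{-n})}$ for $\widetilde I\in\{I,I^+,I^-\}$. Combining this with the estimates for $I$ from the first step gives the three displayed double inequalities for every $\widetilde I$, once more absorbing the accumulated $O(2^{-n}P_{2n+1})$ errors into the stated $2^{-n}$ by taking $n$ large (that is, $\ep$ small) and using the room left by the $2^{-n-2}$ exponents of $\mathcal{P}$.

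The main obstacle is precisely the content of the second paragraph: recognising that the seemingly dangerous long-carry configurations are automatically excluded by the neighbour bounds built into $\mathcal{P}$, so that the restrictions to $\mathcal{A}$ and $\mathcal{A}^c$ of $I^{\pm}$, although possibly far from $\IA$ and $\IAc$, are reached from $w$ through only a controlled number of bit flips. Everything else—the three factorisations, the negligibility of the earlier blocks for $\nu$, and the conversion $j^\mathcal{A}\approx\eta P_{2n+1}$—is bookkeeping with the constants fixed in (A1)--(A4).
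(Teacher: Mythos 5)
Your proof is correct, and on the one genuinely delicate point it is more complete than the paper's own argument. For $\widetilde I=I$ the two proofs coincide: factorise $\muu(I)=\muu(\IA)\muu(\IAc)$, $\nue(I)=\muu(\IA)\mud(\IAc)$ and $\nu(I)$ over the $\mathcal{A}$/$\mathcal{A}^c$ positions, apply the hypothesis $\mathcal{P}$ to each factor, and convert $P_{2n+1}^{\mathcal{A}}\approx\eta P_{2n+1}$ via (A1). The difference is the treatment of the neighbours. The paper disposes of them with the single assertion that for $\widetilde I\in\{I^+,I^-\}$ the intervals $\widetilde I^{\mathcal{A}}$ and $\widetilde I^{\mathcal{A}^c}$ ``also satisfy \eqref{triplet}'', which implicitly assumes $(I^{\pm})^{\mathcal{A}}\in\{\IA,(\IA)^+,(\IA)^-\}$. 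That inclusion fails in general: if $w=v01^M$ and at least two positions of the trailing block $\{j-M+1,\ldots,j\}$ lie in $\mathcal{A}$ while $j-M\in\mathcal{A}^c$, then $w^{\mathcal{A}}=u1^{m'}$ and $(w^+)^{\mathcal{A}}=u0^{m'}$ with $m'\ge 2$, the two extreme (non-adjacent) descendants of $I_u$. Your run-length argument is precisely what repairs this: the lower bounds on $\muu((\IA)^{\pm})$ and upper bound on $\muu(\IA)$ built into $\mathcal{P}$ force the trailing runs of $w^{\mathcal{A}}$ and $w^{\mathcal{A}^c}$, hence of $w$, to have length $O(2^{-n}P_{2n+1})$, so $w^{\pm}$ is obtained from $w$ by $O(2^{-n}P_{2n+1})$ bit flips, each multiplying $\muu,\mud,\nue,\nu$ by a bounded factor, and the resulting $2^{O(2^{-n}P_{2n+1})}$ correction is of the same order as the errors already present. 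In short, your second paragraph supplies the justification of the neighbour step that the paper states as an (over-optimistic) claim. Two loose ends are shared with the paper and not specific to your write-up: the accumulated error is $C2^{-n}P_{2n+1}$ with $C$ depending on $p_1,p_2,\gamma_2$ rather than exactly $2^{-n}P_{2n+1}$ (one should either shrink the tolerance in $\mathcal{P}$ or restate the lemma with a constant), and both arguments need $P_{2n}=o(2^{-n}P_{2n+1})$ so that the earlier blocks in the construction of $\nu$ are negligible, a condition the choice of the $P_m$ permits but does not explicitly impose.
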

\begin{proof}
This directly follows from the following arguments:
\begin{itemize}
\item
$|P_n^{\mathcal{A}} - \eta P_n|\leq 2^{-n-2}P_n $ and $|P_n^{\mathcal{A}^c} - (1-\eta) P_n|\leq 2^{-n-2}P_n$ by $(A1)$,
\item
 the previous lemmas,  
 \item
  the multiplicative formulas $\muu(I)= \muu(I^{\mathcal{A}})\muu(I^{\mathcal{A}^c})$, $\nue(I)= \muu(I^{\mathcal{A}})\mud(I^{\mathcal{A}^c})$ and  $\nu(I)= \mu_{\wip_\alpha}(I^{\mathcal{A}})\mu_{\wip_{\beta}}(I^{\mathcal{A}^c})$, 
\item
the assumptions $(A3)$ and $(A4)$ on $P_n$,
  \item  if $\widetilde I$ is one of the two neighbors of $I$, then $\widetilde I^{\mathcal{A}}$  and  $\widetilde I^{\mathcal{A}^c}$ also  respectively   satisfy  \eqref{triplet}.
  
  \end{itemize}
\end{proof}
 
\begin{lemma}
\label{lemma-final4bis}
For every $n\geq 1$,  every $\alpha$,    every $I\in \Sigma_{P_{2n+2}}$, if $I^\mathcal{A}$ satisfies $\mathcal{P}(p_1,p_2,\alpha,  n,P_{2n+2}^{\mathcal{A}} )$ and $I^{\mathcal{A}^c} $ satisfies $\mathcal{P}(p_1,p_2, \beta, n,P_{2n+2}^{\mathcal{A}^c})$, then for every $\widetilde I\in \{I,I^+,I^-\}$,
\begin{eqnarray*}
2^{-P_{2n+2}(\eta \alpha+(1-\eta)\beta+2^{-n})}&  \leq  \muu(\widetilde I) \leq & 2^{-P_{2n+2}(\eta \alpha+(1-\eta)\beta-2^{-n})} \\
2^{-P_{2n+2}(\eta \alpha+(1-\eta)\G(\beta)+2^{-n})} & \leq \nue(\widetilde I) \leq & 2^{-P_{2n+2}(\eta \alpha+(1-\eta)\G(\beta)-2^{-n})} \\
2^{-P_{2n+2}(F_\eta(\alpha,\beta)+2^{-n})} & \leq \nu(\widetilde I) \leq & 2^{-P_{2n+2}(F_\eta(\alpha, \beta_{\alpha})-2^{-n})} .
\end{eqnarray*}

\end{lemma}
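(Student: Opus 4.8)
The plan is to prove this lemma exactly as Lemma \ref{lemma-final4}, simply transposing the argument from the odd generation $P_{2n+1}$ to the even generation $P_{2n+2}$ and replacing the pair $(\alpha',\beta')$ by $(\alpha,\beta)$. The reason the two proofs are parallel is structural: Step $2n+2$ of the construction assigns the weight $\mu_{\wip_\alpha}$ along the $\mathcal{A}$-coordinates of $I$ and the weight $\mu_{\wip_\beta}$ along the $\mathcal{A}^c$-coordinates, so that the three multiplicative identities $\muu(\widetilde I)=\muu(\widetilde I^{\mathcal{A}})\,\muu(\widetilde I^{\mathcal{A}^c})$, $\nue(\widetilde I)=\muu(\widetilde I^{\mathcal{A}})\,\mud(\widetilde I^{\mathcal{A}^c})$ and $\nu(\widetilde I)=\mu_{\wip_\alpha}(\widetilde I^{\mathcal{A}})\,\mu_{\wip_\beta}(\widetilde I^{\mathcal{A}^c})$ hold at generation $P_{2n+2}$ with $\alpha,\beta$ playing the roles that $\alpha',\beta'$ played at generation $P_{2n+1}$.

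Concretely, I would first invoke the hypotheses: since $I^{\mathcal{A}}$ satisfies $\mathcal{P}(p_1,p_2,\alpha,n,P_{2n+2}^{\mathcal{A}})$, the three lines of \eqref{triplet} give, for every $\widetilde I\in\{I,I^+,I^-\}$, that $\muu(\widetilde I^{\mathcal{A}})$, $\mud(\widetilde I^{\mathcal{A}})$ and $\mu_{\wip_\alpha}(\widetilde I^{\mathcal{A}})$ equal $2^{-P_{2n+2}^{\mathcal{A}}\gamma}$ with $\gamma\in\{\alpha,\G(\alpha),D_{\muu}(\alpha)\}$, up to a multiplicative error $2^{\pm 2^{-n-2}P_{2n+2}^{\mathcal{A}}}$; symmetrically, $\mathcal{P}(p_1,p_2,\beta,n,P_{2n+2}^{\mathcal{A}^c})$ controls the three masses on $\widetilde I^{\mathcal{A}^c}$ around $\beta$, $\G(\beta)$ and $D_{\muu}(\beta)$. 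Multiplying through the three identities (first lines on both sides for $\muu$; first line on $\mathcal{A}$ and second line on $\mathcal{A}^c$ for $\nue$; third lines on both sides for $\nu$) then produces the exponents $P_{2n+2}^{\mathcal{A}}\alpha+P_{2n+2}^{\mathcal{A}^c}\beta$ for $\muu$, $P_{2n+2}^{\mathcal{A}}\alpha+P_{2n+2}^{\mathcal{A}^c}\G(\beta)$ for $\nue$, and $P_{2n+2}^{\mathcal{A}}D_{\muu}(\alpha)+P_{2n+2}^{\mathcal{A}^c}D_{\muu}(\beta)$ for $\nu$.

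It remains to replace $P_{2n+2}^{\mathcal{A}}$ and $P_{2n+2}^{\mathcal{A}^c}$ by $\eta P_{2n+2}$ and $(1-\eta)P_{2n+2}$, turning the last exponent into $P_{2n+2}F_\eta(\alpha,\beta)$ and the first two into the claimed values. For this I would use that (A1) together with \eqref{maj-card-1} yields $|P_{2n+2}^{\mathcal{A}}-\eta P_{2n+2}|\leq 2^{-n-2}P_{2n+2}$ and $|P_{2n+2}^{\mathcal{A}^c}-(1-\eta)P_{2n+2}|\leq 2^{-n-2}P_{2n+2}$. Since all the relevant exponents $\alpha,\beta,\G(\beta),D_{\muu}(\alpha),D_{\muu}(\beta)$ are bounded in absolute value by $\gamma_2$, each such substitution costs a further error of order at most $\gamma_2\,2^{-n-2}P_{2n+2}$; moreover the neighbour case requires no extra work, because \eqref{triplet} is imposed simultaneously on $I$, $I^+$ and $I^-$, so $\widetilde I^{\mathcal{A}}$ and $\widetilde I^{\mathcal{A}^c}$ automatically inherit the required estimates.

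The only genuinely delicate point — and the place where the proof is not purely mechanical — is this final error bookkeeping: one must check that the intrinsic $\mathcal{P}$-errors (of size $2^{-n-2}$ per coordinate block), the two cardinality fluctuations, and the factor $\gamma_2/\gamma_1$ relating exponent sizes to block lengths all combine to an exponent error no larger than $2^{-n}P_{2n+2}$. This is exactly what assumptions (A3) and (A4) are tailored to guarantee, so I would close the proof by feeding those two inequalities into the accumulated error and reading off the three displayed two-sided bounds.
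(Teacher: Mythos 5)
Your proposal is correct and follows essentially the same route as the paper, which itself disposes of this lemma by declaring it identical to Lemma \ref{lemma-final4} after the substitutions $P_{2n+1}\to P_{2n+2}$, $\alpha'\to\alpha$, $\beta'\to\beta$: the multiplicative splitting along $\mathcal{A}$ and $\mathcal{A}^c$, the three bounds from \eqref{triplet}, the cardinality control $|P_{2n+2}^{\mathcal{A}}-\eta P_{2n+2}|\leq 2^{-n-2}P_{2n+2}$ from (A1), the error bookkeeping via (A3)--(A4), and the observation that the neighbour intervals inherit \eqref{triplet} are exactly the ingredients the paper lists. No gap.
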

\begin{proof}
It is identical to the previous lemma  replacing $P_{2n+1}$ by $P_{2n+2}$, $\alpha'$ by $\alpha$ and ${\beta'}$ by $\beta$.\end{proof}

\begin{lemma}
\label{lemma-final5}
With probability one, for every $x\in  E(\nu,\ep)$, $\dimi(\muu,x)=H_1$ and $\dimi(\nue,x)=H_2$, and  $\dimi(\nu,x)= \min(F_\eta(\alpha,\beta), F_\eta(\alpha',\beta' ))$.
\end{lemma}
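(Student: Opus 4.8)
The plan is to combine Lemmas \ref{lemma-final4} and \ref{lemma-final4bis}, which fix the values of $\muu$, $\nue$ and $\nu$ (and of their two neighbors) at the special generations $P_{2n+1}$ and $P_{2n+2}$, with an analysis of all intermediate generations based on the multiplicative structure of the three measures and the cardinality control \eqref{maj-card-1}. Fix $x\in E(\nu,\ep)$ and set $A'=\eta\alpha'+(1-\eta)\beta'$ and $B=\eta\alpha+(1-\eta)\G(\beta)$. Recall that $(S_1)$ gives $H_1=\eta\alpha+(1-\eta)\beta$ and $H_2\le B$, while $(S_2)$ gives $H_1\le A'$ and $H_2=\eta\alpha'+(1-\eta)\G(\beta')$. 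Hence, by Lemma \ref{lemma-final4bis}, $\muu(\widetilde I_{P_{2n+2}}(x))\approx 2^{-P_{2n+2}H_1}$ and $\nu(\widetilde I_{P_{2n+2}}(x))\approx 2^{-P_{2n+2}F_\eta(\alpha,\beta)}$, and by Lemma \ref{lemma-final4}, $\nue(\widetilde I_{P_{2n+1}}(x))\approx 2^{-P_{2n+1}H_2}$ and $\nu(\widetilde I_{P_{2n+1}}(x))\approx 2^{-P_{2n+1}F_\eta(\alpha',\beta')}$. These two subsequences already yield the upper bounds $\dimi(\muu,x)\le H_1$, $\dimi(\nue,x)\le H_2$ and $\dimi(\nu,x)\le\min(F_\eta(\alpha,\beta),F_\eta(\alpha',\beta'))$.

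The heart of the proof is the matching lower bound, obtained by treating a generic generation $j$ inside one of the two families of blocks. Take first $j=P_{2n+1}+m$ with $0\le m\le P_{2n+2}-P_{2n+1}$, a block built at Step $2n+2$ from $\mu_{\wip_\alpha}$ and $\mu_{\wip_\beta}$. Decomposing $I_j(x)^{\mathcal{A}}$ and $I_j(x)^{\mathcal{A}^c}$ into their prefixes of lengths $P_{2n+1}^{\mathcal{A}}$, $P_{2n+1}^{\mathcal{A}^c}$ and the remaining $\#\mathcal{A}_{P_{2n+1}+1,j}\approx\eta m$, $\#\mathcal{A}^c_{P_{2n+1}+1,j}\approx(1-\eta)m$ letters of the current block, and using that $x$ is good so that these letters carry the $\mu_{\wip_\alpha}$- resp. $\mu_{\wip_\beta}$-typical exponents, the product formulas $\muu(I)=\muu(I^{\mathcal{A}})\muu(I^{\mathcal{A}^c})$, $\nue(I)=\muu(I^{\mathcal{A}})\mud(I^{\mathcal{A}^c})$ and $\nu(I)=\mu_{\wip_\alpha}(I^{\mathcal{A}})\mu_{\wip_\beta}(I^{\mathcal{A}^c})$ give, up to an $O(2^{-n})$ error per generation,
\begin{align*}
\frac{\log\muu(\widetilde I_j(x))}{-j}&\approx\frac{P_{2n+1}A'+mH_1}{P_{2n+1}+m}, \\
\frac{\log\nue(\widetilde I_j(x))}{-j}&\approx\frac{P_{2n+1}H_2+mB}{P_{2n+1}+m}, \\
\frac{\log\nu(\widetilde I_j(x))}{-j}&\approx\frac{P_{2n+1}F_\eta(\alpha',\beta')+mF_\eta(\alpha,\beta)}{P_{2n+1}+m}.
\end{align*}
Each right-hand side is a convex combination of its two endpoint exponents, hence lies between them; since $A'\ge H_1$ and $B\ge H_2$, the first two stay above $H_1$ and $H_2$. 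The same computation on the blocks $j=P_{2n+2}+m$ built at Step $2n+3$ from $\mu_{\wip_{\alpha'}}$, $\mu_{\wip_{\beta'}}$ interpolates in the opposite direction and again remains above $H_1$ and $H_2$. Thus $\frac{\log\muu(\widetilde I_j(x))}{-j}\ge H_1-o(1)$ and $\frac{\log\nue(\widetilde I_j(x))}{-j}\ge H_2-o(1)$ for all large $j$, which with the upper bounds gives $\dimi(\muu,x)=H_1$ and $\dimi(\nue,x)=H_2$. For $\nu$ the interpolation oscillates between $F_\eta(\alpha,\beta)$ and $F_\eta(\alpha',\beta')$ with no sign constraint, so the infimum over $j$ is $\min(F_\eta(\alpha,\beta),F_\eta(\alpha',\beta'))$, i.e. $\dimi(\nu,x)=\min(F_\eta(\alpha,\beta),F_\eta(\alpha',\beta'))$.

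Two points carry the technical load and are where I expect the main difficulty. First, all estimates must hold simultaneously for the three neighbors $\widetilde I\in\{I,I^+,I^-\}$, since the local dimensions are computed through $\max(\widetilde I_j(x))$; this is precisely why Lemma \ref{lem-renouv} and the property $\mathcal{P}(p_1,p_2,\cdot,n,\cdot)$ are formulated for the triple, and the increments of a neighbor differ from those of $I_j(x)$ only through boundary carries that shift the digit counts by $O(1)$, hence are absorbed in the $o(1)$ error. Second, one must verify that the accumulated error is genuinely $o(1)$ at the scale of the current block: the deviation $2^{-n-2}$ in $\mathcal{P}$ and the fluctuation $2^{-n-2}m$ in \eqref{maj-card-1} both carry the block index $n$ and therefore tend to $0$, while the choices of the $P_n$ large at each step—already exploited in Lemmas \ref{lemma-final4} and \ref{lemma-final4bis}, and which in particular make $P_{2n+1}=o(P_{2n+2})$ and $P_{2n+2}=o(P_{2n+3})$—guarantee that the contribution of all earlier blocks to the exponent at a special generation is negligible, so that the endpoint exponents are indeed the asserted ones. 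The main obstacle is this bookkeeping of errors across the non-homogeneous blocks; once it is controlled, the convex, weighted-average structure of the interpolation makes the identification of the three $\liminf$ values immediate. Since every estimate used holds on the probability-one event provided by Lemma \ref{lem-maj-cardinal}, the conclusion holds almost surely.
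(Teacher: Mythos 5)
Your proposal is correct and follows essentially the same route as the paper: the endpoint values at the special generations $P_{2n+1}$, $P_{2n+2}$ (Lemmas \ref{lemma-final4} and \ref{lemma-final4bis}) give the upper bounds on the three lower local dimensions, and the matching lower bounds come from writing the exponent at an intermediate generation $j$ as a weighted average of the exponent inherited at the last special generation and the current block's incremental exponent, which stays above $H_1$, $H_2$, resp.\ $\min(F_\eta(\alpha,\beta),F_\eta(\alpha',\beta'))$ precisely because $(S_1)$ and $(S_2)$ give $H_1\le \eta\alpha'+(1-\eta)\beta'$ and $H_2\le \eta\alpha+(1-\eta)\G(\beta)$ --- this is exactly the paper's computation of $\nue(I_j(x))=\nue(I)\,\muu(L_j^{\mathcal{A}_{P_{2n}}}(x))\,\mud(L_j^{\mathcal{A}_{P_{2n}}^c}(x))$. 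The one piece of ``bookkeeping'' you defer but the paper treats explicitly is the transition zone $j\in[P_{2n}+1,P_{2n}(1+2^{-n})]$ at the very start of each block, where \eqref{maj-card-1} and the renewal property are not yet usable and the paper instead invokes the trivial monotonicity $\nue(I_j(x))\le\nue(I_{P_{2n}}(x))$ together with $P_{2n}/j\ge 1-2^{-n}$.
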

\begin{proof}

We prove the claim for $\nue$, the proof is similar for $\muu$ and $\nu$.

By the second equations of Lemma \ref{lemma-final4} and  Lemma \ref{lemma-final4bis}, and recalling that 
\begin{equation}
\label{eq-H2}
H_2=\eta\alpha'+(1-\eta)\G(\beta') \leq \eta \alpha+(1-\eta)\G(\beta)
\end{equation}
 by the systems  $(S_1)$ and   $(S_2)$, it is enough to show that when $j\in [P_{n}+1, P_{n+1}-1]$, $\nue(\widetilde I)  \leq 2^{-j(H_2-K 2^{-n})}   $ (for some constant $K$ independent of $n$ and $j$).

Let us prove it for  $j\in [P_{2n}+1, P_{2n+1}-1]$,   the other case $j\in [P_{2n+1}+1, P_{2n+2}-1]$ is identical.

At generation $P_{2n}$, the second equation of  Lemma \ref {lemma-final4bis} holds.

Then,  let $I\in \Sigma_{P_{2n}}$  containing some $x\in E(\nu,\ep)$, and write $x=0,x_1x_2....x_n..$, with $x_n\in \Sigma_{P_{n+1}-P_{n}}$. So , using \eqref{def-APn}, $x_{2n}$ belongs to some interval $L \in \Sigma_{P_{2n+1}-P_{n}}$ such that $L^{\mathcal{A}_{P_{2n} } } \in E_{p_1,p_2,\alpha, n,\ep}$ and  $L^{\mathcal{A}_{P_{2n} } ^c} \in E_{p_1,p_2,\beta, n,\ep}$.

 Write the dyadic coding of $I_j(x)$ as $x_1x_2...x_{2n}w_j(x)$, and call $L_j(x) \in \Sigma_{j-P_{2n}}$ the dyadic interval whose dyadic coding is $w_j(x)$.
One has by construction 
\begin{eqnarray*}
  \nue(I_{j}(x) ) = &   \nue (I) \muu( L^{\mathcal{A}_{P_{2n}}} _j(x) ) \mud(L_j^{\mathcal{A}_{P_{2n}}^c}(x)) 
  \end{eqnarray*}
 where  by Lemma \ref {lemma-final4bis},
\begin{eqnarray*}
  2^{-j(\eta \alpha+(1-\eta)\G(\beta)+2^{-n})} & \leq \nue(  I) \leq & 2^{-j(\eta \alpha+(1-\eta)\G(\beta)-2^{-n})} .\end{eqnarray*}

\begin{itemize}

\sk\item
{\bf Assume that  $j\in [P_{2n}+1, P_{2n}(1+2^{-n})]$:}  one has by construction 
\begin{eqnarray*}
  \nue(I_{j}(x) ) & \leq    \nue (I)  \leq 2^{-P_{2n}(\eta \alpha+(1-\eta)\G(\beta)-2^{-n})}= 2^{-j \frac{P_{2n}}{j}(\eta \alpha+(1-\eta)\G(\beta)-2^{-n})}.
  \end{eqnarray*}
 But by (A4), $\frac{P_{2n}}{j}\geq 1-2^{-n}$, so 
$$
  \nue(I_{j}(x) ) \leq 2^{-j (\eta \alpha+(1-\eta)\G(\beta)-2^{-n})(1-2^{-n})}\leq 2^{-j (\eta \alpha+(1-\eta)\G(\beta)-(\gamma_2+1)2^{-n})}\leq 2^{-j(H_2-(\gamma_2+1)2^{-n})}.
$$

\sk\item
{\bf Assume that  $j\in [ P_{2n}(1+2^{-n}), P_{2n+1}-1]$:}  By definition of $E(\nu,\ep)$, $ \muu(L_j^{\mathcal{A}_{P_{2n}}}(x)) \leq  2^{-  \#\mathcal{A}_{P_{2n},j}  ( \alpha'-2^{-n-2 })} $  and $ \mud(  L_j^{\mathcal{A}_{P_{2n}}^c}(x)) \leq    2^{- \#\mathcal{A}_{P_{2n},j}   ^c  (\G(\beta')-2^{-n-2})} $. Hence,
\begin{eqnarray*}
  \nue(I_{j}(x) ) & \leq  &  2^{-P_{2n}(\eta \alpha+(1-\eta)\G(\beta)-2^{-n})} 2^{-  \#\mathcal{A}_{P_{2n},j}  ( \alpha'-2^{-n-2 })}  2^{- \#\mathcal{A}_{P_{2n},j}   ^c  (\G(\beta')-2^{-n-2})} .
    \end{eqnarray*}
By definition of $P_{2n}$, $|  \#\mathcal{A}_{P_{2n},j} -\eta(j-P_{2n})| \leq 2^{-n-2} (j-P_{2n})$ and $|  \#\mathcal{A}^c_{P_{2n},j} -(1-\eta) (j-P_{2n})| \leq 2^{-n-2} (j-P_{2n})$, so
\begin{eqnarray*}
 \nue(I_{j}(x) )
  &     \leq  & 2^{-P_{2n}(\eta \alpha+(1-\eta)\G(\beta)) +  (j-P_{2n}) ( \eta   \alpha' +(1-\eta)\G(\beta'))} 2^{t}, 
   \end{eqnarray*}
  where $t= 2^{-n}P_{2n} + 2^{-n-2}(j-P_{2n})  ( \alpha'-2^{-n-2 })  +  \eta(j-P_{2n}) 2^{-n-2}
+  2^{-n-2}(j-P_{2n})  ( \G(\beta')-2^{-n-2 } + (1-\eta)(j-P_{2n}) 2^{-n-2}   $. A quick estimate shows that  $t \leq (\gamma_2+2)2^{-n} j$. Hence, recalling \eqref{eq-H2}, one concludes that \begin{eqnarray*}
 \nue(I_{j}(x) )
  &     \leq  & 2^{-j( H_2-(\gamma_2+2)2^{_n})} . 
     \end{eqnarray*}

\end{itemize}

One concludes that $\ep>0$ being fixed, for every sufficiently large integer $j$, $\nue(I_j(x))\leq 2^{-j(H_2-\ep)}$. This yields that $\underline{\dim}(\nue,x)\geq H_2-\ep$. Also, by Lemma \ref{lemma-final4} $\nue(I_j(x))\geq 2^{-j(H_2+\ep)}$ for infinitely many integers $j$, so $\underline{\dim}(\nue,x)\leq H_2+\ep$. Letting $\ep$ tend to zero gives the result.

\end{proof}

Finally, we  proved that for all $x\in E(\nu,\ep)$, $\underline{\dim}(\nu,x)=\delta$. This holds for every $\ep$, and since $\nu(E(\nu,\ep))\geq 1-\ep$, one derives that for $\nu$-almost every $x$, $\underline{\dim}(\nu,x)=\delta$, $\underline{\dim}(\muu,x)=H_1$ and $\underline{\dim}(\nue,x)=H_2$.   Applying  Billingsley's Lemma allows then  to conclude the proof of Proposition \ref{prop-expo-simultane}.
\end{proof}

\subsection {Failure of the bivariate \ml formalism}

Since the multifractal  spectrum has a support strictly larger than the support of the Legendre spectrum, it is obvious that the pair $(\muu,\nue)$ does not satisfy the bivariate  \ml formalism. 

 Even if we do not elaborate on this, the two spectra do not coincide on the  intersection of their support either (see Figure \ref{fig-superposition}).
Indeed, it can be checked that $D_{\muu,\nue}$ is strictly concave. Since the Legendre spectrum has a conic shape on one part of its support,   the two spectra may coincide only on the other part of the support, i.e. where the Legendre spectrum $\tau_{\muu,\nue}^*$ is given by $(\Taue)^*$.

Recalling \eqref{speccor} which gives an alternative formula for $(\Taue)^*(H_1,H_2)$,  one sees  that $D_1$ and $(\Taue)^*$ coincide when   in Lemma \ref{lem-max} $\mathcal{M}(\alpha_{H_1,H_2}, \beta_{H_1,\alpha_{H_1,H_2}}) =(H_1,H_2)$, and  that $D_2$ and $(\Taue)^*$ coincide when    $\mathcal{M}(\widetilde \alpha_{H_1,H_2}, \widetilde\beta_{H_1,\widetilde\alpha_{H_1,H_2}}) =(H_1,H_2)$. This defines two distinct regions in the plane, with non-empty intersection. In particular, the two spectra coincide on the segment

\begin{center}
\begin{figure}
\includegraphics[width=7.8cm,height=6cm]{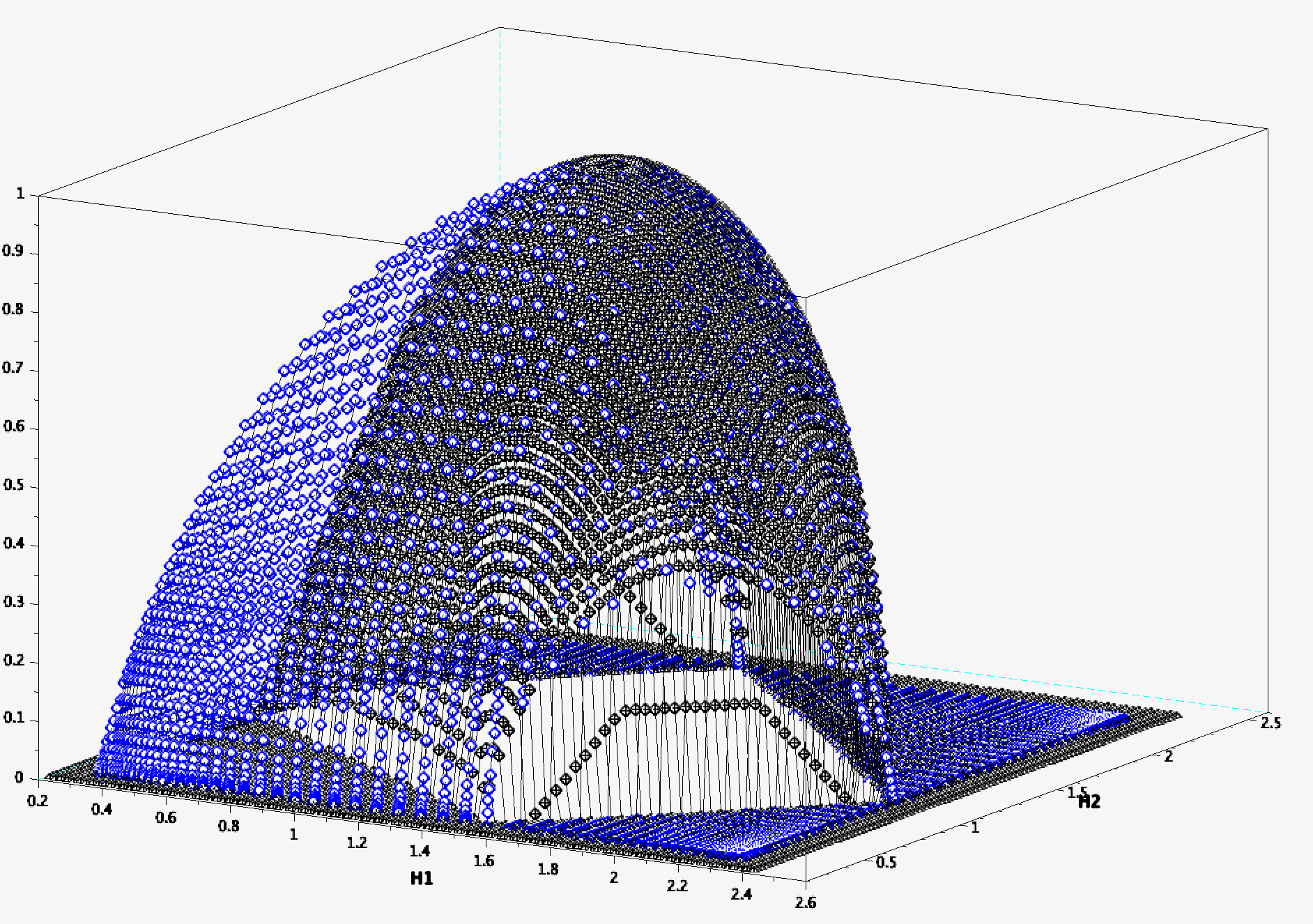} \  \ 
\includegraphics[width=7.8cm,height=6cm]{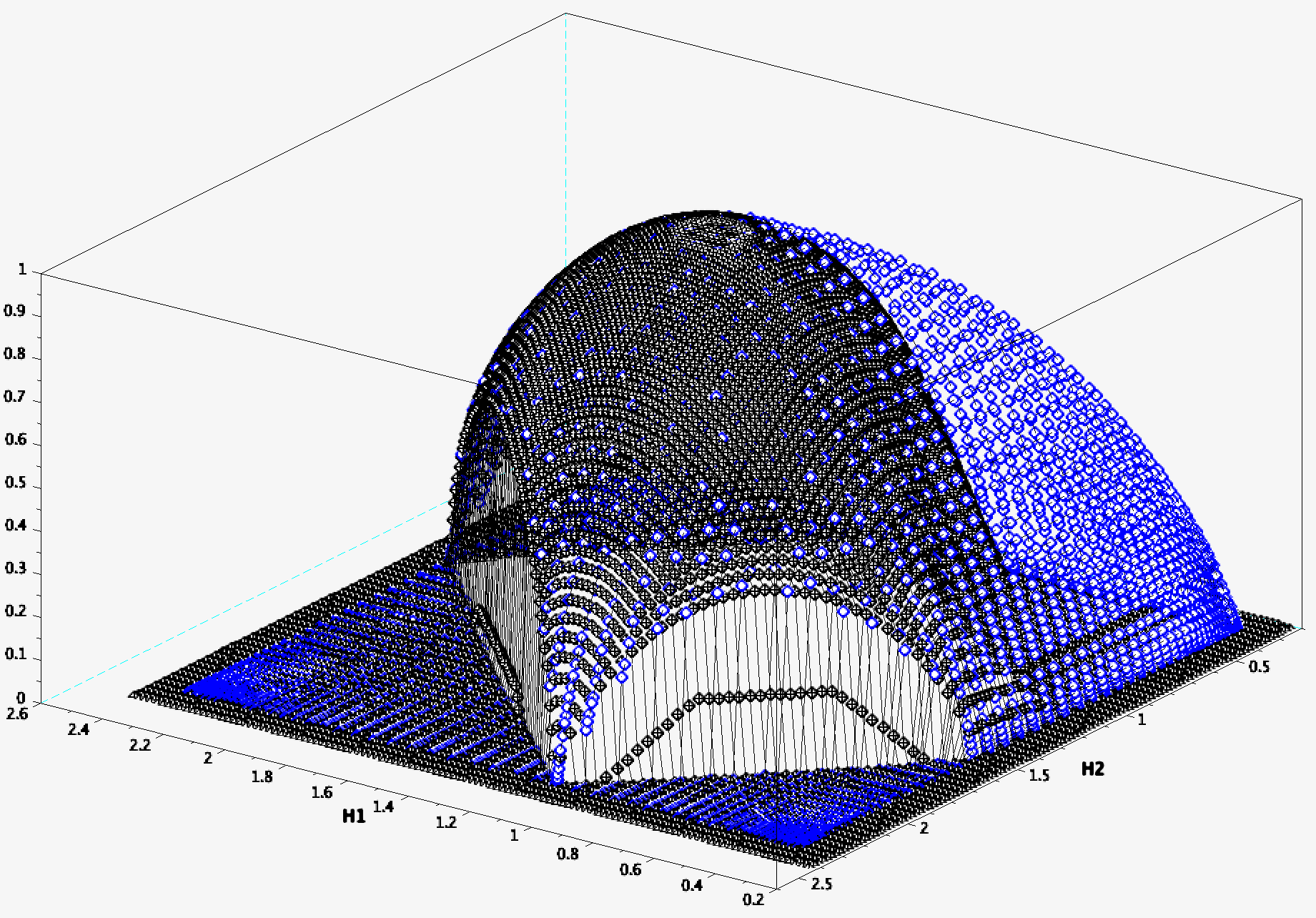}  
  \caption{Two views of the superposition of the Legendre (in black) and multifractal (in blue) spectrum of the pair $(\muu,\nue)$ for $p_1=0.2$, $p_2=0.8$, $\eta=0.4$. The two spectra coincide on a region of the plane, and $D_{\muu,\nue}$ has a wider support than $\tau_{\muu,\nue}^*$.}
\label{fig-superposition}
 \end{figure}
 \end{center}

\bibliographystyle{plain}
\bibliography{Biblio_BME}

\begin{thebibliography}{10}

\bibitem{Multivariate-PRSA}
P.~Abry, S.~Jaffard, R.~Leonarduzzi, S.~Seuret, and H.~Wendt.
\newblock Multifractal formalisms for multivariate analysis.
\newblock {\em Proceedings of the Royal Society A}, 2229(475), September 2019.

\bibitem{Attia-Barral}
N.~Attia and J.~Barral.
\newblock Hausdorff and packing spectra, large deviations and free energy for
  branching random walks in $\mathbb{R}^d$.
\newblock {\em Comm. Math. Phys.}, 331(139-187), 2014.

\bibitem{BBP}
J.~Barral, F.~Ben~Nasr, and J.~Peyri\`ere.
\newblock Comparing multifractal formalisms: the neighboring condition.
\newblock {\em Asian J. Math.}, 7:149--166, 2003.

\bibitem{Barral-Feng-21}
J.~Barral and D.-J. Feng.
\newblock On multifractal formalism for self-similar measures with overlaps.
\newblock {\em Math. Z.}, 298(1-2):359--383, 2021.

\bibitem{BarralFeng2}
J.~Barral and D.J. Feng.
\newblock Multifractal formalism for almost all self-affine measures.
\newblock {\em Comm. Math. Phys.}, 318:473--504, 2013.

\bibitem{BMPSPUM}
J.~Barral and B.~Mandelbrot.
\newblock Random multiplicative multifractal measures.
\newblock {\em Proc. Symp. Pure Math., M. L. Lapidus and M. van Frankenhuysen
  eds}, 72:1--52, 2004.

\bibitem{BarrSau}
L.~Barreira and B.~Saussol.
\newblock Variational principles and mixed multivariate spectra.
\newblock {\em Trans. A. M. S.}, 353 (10):3919--3944, 2001.

\bibitem{BarSauSchmel}
L.~Barreira, B.~Saussol, and J.~Schmeling.
\newblock Higher-dimensional multifractal analysis.
\newblock {\em J. Maths Pures Appl.}, 81:67--91, 2002.

\bibitem{Bay}
F.~Bayart.
\newblock Multifractal spectra of typical and prevalent measures.
\newblock {\em Nonlinearity}, 26:353--367, 2013.

\bibitem{BRMICHPEY}
G.~Brown, G.~Michon, and J.~Peyri\`ere.
\newblock On the multifractal analysis of measures.
\newblock {\em J. Stat. Phys.}, 66:775--790, 1992.

\bibitem{BuS2}
Z.~Buczolich and S.~Seuret.
\newblock Typical {B}orel measures on $[0,1]^d$ satisfy a multifractal
  formalism.
\newblock {\em Nonlinearity.}, 23(11):7--13, 2010.

\bibitem{Fal93}
K.~Falconer.
\newblock {\em Fractal Geometry: Mathematical Foundations and Applications}.
\newblock John Wiley \& Sons, West Sussex, England, 1993.

\bibitem{FanWeighted}
A.-H. Fan.
\newblock Multifractal analysis of weighted ergodic averages.
\newblock {\em Adv. Math.}, 377, 2021.

\bibitem{FAN}
A.-H. Fan, D.-J. Feng, and J.J. Wu.
\newblock Recurrence, dimension and entropy.
\newblock {\em J. London Math. Soc.}, 64:229--244, 2001.

\bibitem{FengLau}
D.-J. Feng and K.S. Lau.
\newblock Multifractal formalism for self-similar measures with weak separation
  condition.
\newblock {\em J. Math. Pures Appl.}, 92(3):407--428, 2009.

\bibitem{FENG2007}
D.J. Feng.
\newblock Gibbs properties of self-conformal measures and the multifractal
  formalism.
\newblock {\em Ergodic Theory Dynam. Systems}, 27(3):787--812, 2007.

\bibitem{FrischParisi}
U.~Frisch and D.~Parisi.
\newblock Fully developed turbulence and intermittency in turbulence, and
  predictability in geophysical fluid dynamics and climate dynamics.
\newblock {\em International school of Physics ``Enrico Fermi", course 88,
  edited by M.~Ghil, North Holland}, pages 84--88, 1985.

\bibitem{Halsey}
T.C. Halsey, M.H. Jensen, L.P. Kadanoff, I.~Procaccia, and B.I. Shrai\-man.
\newblock Fractal measures and their singularities: the characterisation of
  strange sets.
\newblock {\em Phys.\ Rev.~A}, 33:1141--1151, 1986.

\bibitem{ACHA2018}
S.~Jaffard, S.~Seuret, H.~Wendt, R.~Leonarduzzi, S.~Roux, and P.~Abry.
\newblock Multivariate multifractal analysis.
\newblock {\em Applied and Computational Harmonic Analysis}, 46(3):653--663,
  2019.

\bibitem{Zhi}
Zhi-Qiang Jiang, Yan-Hong Yang, Gang-Jin Wang, and Wei-Xing Zhou.
\newblock Joint multifractal analysis based on wavelet leaders.
\newblock {\em Frontiers of Physics}, 12, 2017.

\bibitem{LauNgai}
K.S. Lau and S.-M. Ngai.
\newblock Multifractal measures and a weak separation condition.
\newblock {\em Adv. Math.}, 141(1):45--96, 1999.

\bibitem{Marstrand54}
J.M. Marstrand.
\newblock Some fundamental geometrical properties of plane sets of fractional
  dimensions.
\newblock {\em London Math. Soc. Proc.}, 3(4):257--302, 1954.

\bibitem{Mattila75}
P.~Mattila.
\newblock Hausdorff dimension, orthogonal projections and intersections with
  planes.
\newblock {\em Ann. Acad. Sci. Fenn. Math.}, 1:227--244, 1975.

\bibitem{MATTILA}
P.~Mattila.
\newblock {\em Geometry of Sets and Measures in Euclidean Spaces}.
\newblock Cambridge University Press, 1995.

\bibitem{Meneveau90}
C.~Meneveau, K.R. Sreenivasan, P.~Kailasnath, and M.S. Fan.
\newblock Joint multifractal measures - theory and applications to turbulence.
\newblock {\em Physical Review A}, 41(2):894--913, January 1990.

\bibitem{Olsen}
L.~Olsen.
\newblock A multifractal formalism.
\newblock {\em Adv. Math.}, 116:92--195, 1995.

\bibitem{Olsen2}
L.~Olsen.
\newblock Mixed divergence points of self-similar measures.
\newblock {\em Indiana University Mathematics Journa}, 52:1343--1372, 2003.

\bibitem{Olsen_2005}
L.~Olsen.
\newblock Mixed generalized dimensions of self-similar measures.
\newblock {\em J. Math. Anal. Appl.}, 306:516--539, 2005.

\bibitem{Shmerkin_annals}
P.~Shmerkin.
\newblock On {F}urstenberg's intersection conjecture, self-similar measures,
  and the $l^q$ norms of convolutions.
\newblock {\em Ann. Math.}, 189(2):319--391, 2019.

\bibitem{Wendt2007d}
H.~Wendt, P.~Abry, and S.~Jaffard.
\newblock Bootstrap for empirical multifractal analysis.
\newblock {\em IEEE Signal Process. Mag.}, 24(4):38--48, 2007.

\bibitem{Wu_annals}
M.~Wu.
\newblock A proof of {F}urstenberg's conjecture on the intersections of $\times
  p$ and $\times q$-invariant sets.
\newblock {\em Ann. Math.}, 189(3):707--751, 2019.

\end{thebibliography}

\end{document}